\newcommand{\Beta}{\mathrm{B}}
\newcommand{\Eta}{\mathrm{H}}
\newcommand{\assign}{:=}
\newcommand{\downsquigarrow}{{\mbox{\rotatebox[origin=c]{-90}{$\rightsquigarrow$}}}}
\newcommand{\mathd}{\mathrm{d}}
\newcommand{\mathpi}{\pi}
\newcommand{\nin}{\not\in}
\newcommand{\textdots}{...}
\newcommand{\tmdummy}{$\mbox{}$}
\newcommand{\tmmathbf}[1]{\ensuremath{\boldsymbol{#1}}}
\newcommand{\tmop}[1]{\ensuremath{\operatorname{#1}}}
\newcommand{\tmscript}[1]{\text{\scriptsize{$#1$}}}
\newcommand{\tmstrong}[1]{\textbf{#1}}
\newcommand{\tmtextit}[1]{\text{{\itshape{#1}}}}
\newenvironment{enumeratealphacap}{\begin{enumerate}[A.] }{\end{enumerate}}
\newenvironment{enumerateromancap}{\begin{enumerate}[I.] }{\end{enumerate}}
\newenvironment{itemizedot}{\begin{itemize} }{\end{itemize}}
\newenvironment{itemizeminus}{\begin{itemize} }{\end{itemize}}
\newenvironment{proof}{\noindent\textbf{Proof\ }}{\hspace*{\fill}$\Box$\medskip}
\newcounter{nnacknowledgments}
\newtheorem{acknowledgments*}[nnacknowledgments]{Acknowledgments}}
\newcounter{nnconvention}
\newtheorem{convention*}[nnconvention]{Convention}}
\newtheorem{definition}{Definition}[subsection]
{\theorembodyfont{\rmfamily}\newtheorem{example}{Example}[subsection]}
\newtheorem{lemma}{Lemma}[subsection]
\newtheorem{proposition}{Proposition}[subsection]
{\theorembodyfont{\rmfamily}\newtheorem{remark}{Remark}[subsection]}
\newcounter{nnremark}
\newtheorem{remark*}[nnremark]{Remark}}
\newtheorem{theorem}{Theorem}[subsection]
\numberwithin{equation}{section}
\begin{document}

\title{Homotopy transfer for QFT on non-compact manifold with boundary: a case
study}

\author{Minghao Wang and Gongwang Yan}

\maketitle

\begin{abstract}
  In this work we report a homological perturbation calculation to construct
  effective theories of topological quantum mechanics on
  $\mathbb{R}_{\geqslant 0}$. Such calculation can be regarded as a
  generalization of Feynman graph computation. The resulting effective
  theories fit into derived BV algebra structure, which generalizes BV
  quantization. Besides, our construction may serve as the simplest example of
  a process called ``boundary transfer'', which may help study bulk-boundary
  correspondence.
\end{abstract}

{\tableofcontents}

\section{Introduction}

Based on Batalin-Vilkovisky (BV) formalism {\cite{BATALIN198127}}, Kevin
Costello has developed a framework {\cite{costellorenormalization}} for
perturbative quantum field theory (QFT), written in the language of
homological algebra. Once a QFT is defined in this way, there will be a set of
cochain complexes of observables encoding the gauge symmetry and interaction
of this theory. Elements in this set are labeled by renormalization scales $t
\in \mathbb{R}_{> 0}$ (if we use heat kernel or fake heat kernel to perform
renormalization), while observables at different scales are connected by
homotopic renormalization. We may call such data a ``renormalized QFT''.
\begin{eqnarray}
  & \begin{array}{|l|}
    \hline
    \text{{\hspace{0.6em}}renormalized QFT{\hspace{0.6em}}}\\
    \hline
  \end{array} \xrightarrow{\text{{\normalsize{homotopy transfer}}}}
  \begin{array}{|l|}
    \hline
    \hspace{0.95em} \text{``effective theory''} \hspace{0.95em}\\
    \hline
  \end{array} &  \nonumber\\
  & \hspace{6.5em} \downsquigarrow \text{{\scriptsize{spacetime geometry}}} &
  \label{scmtcgrphofpqftrsch}\\
  & \begin{array}{|c|}
    \hline
    \begin{array}{c}
      \text{``renormalized}\\
      \text{factorization algebra''}
    \end{array}\\
    \hline
  \end{array} \xrightarrow{\hspace{8em}} \begin{array}{|c|}
    \hline
    \begin{array}{c}
      \text{``effective}\\
      \text{factorization algebra''}
    \end{array}\\
    \hline
  \end{array} &  \nonumber
\end{eqnarray}
We are interested in two aspects of a given renormalized QFT, schematically
depicted above.

The horizontal direction of (\ref{scmtcgrphofpqftrsch}) is to ask what are the
``physical observables''. The answer is an ``effective observable complex''
quasi-isomorphic to the ``renormalized observable complexes'', while does not
depend on the renormalization scale. (The scale should be regarded as an
auxiliary parameter to make the theory well-defined.) The information of
interaction on this effective observable complex is obtained by homotopy
transfer from the renormalized observable complexes. We may say this effective
complex defines an effective theory.

\begin{remark}
  Our terminology differs from that in {\cite{costellorenormalization}}. The
  ``renormalized'' here corresponds to ``effective'' there. Roughly speaking,
  we use ``effective'' to refer to structures ``on the cohomology'' (or ``on
  the physical objects''). See also Remark \ref{20220311lbl2ndrmk} for a
  concrete comparison.
\end{remark}

As for the vertical direction of (\ref{scmtcgrphofpqftrsch}), the question is
what structure on the observables reflects the fact that QFT is local on the
spacetime manifold? In {\cite{costello_gwilliam_2016,costello_gwilliam_2021}},
Kevin Costello and Owen Gwilliam developed a formalism to construct a
factorization algebra of observable complexes from a renormalized QFT. It is
this ``renormalized factorization algebra'' that encodes spacetime locality.
Naturally, we hope to remove the scales from its data by homotopy transfer,
and construct an ``effective factorization algebra''. If we achieve this goal
for various QFT's, the effective factorization algebras are expected to
exhibit structures such as canonical quanization, vertex operator algebras,
bulk-boundary correspondence and functorial formulation of QFT.

The process of homotopy transfer mentioned above is a calculation using
homological perturbation theory. To obtain the effective theory of a QFT on a
closed manifold, this calculation implies Feynman graph formulae, and the
resulting effective theory will fit into BV formalism. (See
{\cite{doubek2018quantum}} for a formal explanation of this point using finite
dimensional toy model.) However, in order to obtain the effective
factorization algebra, we also need to figure out effective theories of QFT's
living on non-compact manifolds. In this case, the argument leading to Feynman
graph formulae fails, and the resulting effective theories can exceed the
scope of BV formalism. We will find that they fit into a structure called
derived BV algebra \ {\cite{kravchenko1999deformations}}.

Moreover, to disclose bulk-boundary correspondences using effective
factorization algebras, we have to figure out effective theories of QFT's on
manifolds with boundary. But with the presence of boundary, renormalization
has not been systematically developed yet in general. If the spacetime is
$\mathbb{H}^n$ equipped with the Euclidean metric, discussion of heat kernel
renormalization can be found in {\cite{albert2016heatmfdwtbdr}}. Later, Eugene
Rabinovich formulated the renormalized theories and factorization algebras for
field theories which are ``topological normal to the boundary''
{\cite{rabinovich2021factorization}}. Particularly, topological quantum
mechanics (TQM) on $\mathbb{R}_{\geqslant 0}$ can be constructed in the
current sense.

\subsection{Main results}

In this work, we focus on the calculation for effective theories of TQM on
$\mathbb{R}_{\geqslant 0}$. Since $\mathbb{R}_{\geqslant 0}$ is a non-compact
manifold with boundary, this simple model carries double difficulties
described in the last two paragraphs. Eventually, we construct an effective
observable complex in Theorem \ref{mainthmtqmefthysdr}. The differential of
this complex and the projection from the renormalized observable complex to
this effective complex have concrete formulae given in
(\ref{mnrsttrsfddffml46}), (\ref{mainrstpjtnfml45}), respectively. They are
simplified from those initial formulae in homological perturbation lemma, just
like Feynman graph formulae arise from homological perturbation calculation
(reviewed in Section \ref{eftintqftclsdmftsec24}). As expected, the effective
observable complex is independent of renormalization scales in the
renormalized theory.

Using Theorem \ref{mainthmtqmefthysdr}, we give three examples of such
effective theories.

Example \ref{egeftccltn44} is essentially a free theory, and reproduces known
result in {\cite[Theorem 3.4.3]{rabinovich2021factorization}}. This is the
simplest example showing how degenerate field theories arise from field
theories on manifold with boundary (we refer to {\cite{butson2016degenerate}}
for more discussion on this point). Example \ref{egeftccltn45} deals with BF
theory, and refines a conclusion in {\cite[Theorem
5.0.2]{rabinovich2021factorization}}.

As for Example \ref{egeftccltn46}, we specifically design it to explicitly
show that BV structure is not enough to describe such effective theory. A
suitable candidate structure is the derived BV algebra structure
{\cite{kravchenko1999deformations}}.

We emphasize two aspects of our calculation. The former corresponds to the
spacetime being non-compact, and the latter corresponds to the presence of
spacetime boundary.

\subsubsection*{Derived BV algebras}

Example \ref{egeftccltn46} motivates us to discuss derived BV algebras. We
base our discussion on {\cite{bandiera2020cumulants}}. For the case we
consider, Proposition \ref{hrgdrvdbvmpsm512} concludes that the renormalized
observable complexes and the specific homotopies (renormalization) between
them are objects and morphisms in the category of derived BV algebras,
respectively. Moreover, Proposition \ref{reallythelastpp519ref} concludes that
both the renormalized and effective observable complexes, together with the
quasi-isomorphisms between them also lie in the category of derived BV
algebras.

\subsubsection*{Boundary transfer}

Suppose there is a renormalized QFT on a manifold $X$ with boundary $\partial
X$, we now sketch a process to obtain a factorization algebra on $\partial X$.

The renormalized QFT should give rise to a renormalized factorization algebra
$\tmop{Obs}_T$ living on (small) tubular neighborhood $T \simeq [0,
\varepsilon) \times \partial X$ of $\partial X$. Then, we can pushforward
$\tmop{Obs}_T$ to $\partial X$ via the projection $[0, \varepsilon) \times
\partial X \rightarrow \partial X$, and find its effective version by homotopy
transfer. We call this process a ``{\tmstrong{boundary transfer}}'', and hope
that it can help study various bulk-boundary correspondences.

Guided by this consideration, we make a ``quasi-calculation'' for BF theory on
$\mathbb{R}_{\geqslant 0} \times \mathbb{R}$ associated to a unimodular Lie
algebra $\mathfrak{g}$. Instead of rigorously defining the renormalized
theory, we treat it as a TQM and apply Theorem \ref{mainthmtqmefthysdr}
ignoring singularity in the data. For the so-called B boundary condition and A
boundary condition, we obtain two ``effective observable complexes''
(\ref{fakecalcuresult2dbfcdtnb411}) and (\ref{fakecalcuresult2dbfcdtna412}).
Then, they are identified with the Chevalley-Eilenberg complex $C_{\bullet}
(\Omega^{\bullet}_c (\mathbb{R}) \otimes \mathfrak{g})$ and the
Chevalley-Eilenberg algebra $\tmop{CE} (\Omega^{\bullet} (\mathbb{R}) \otimes
\mathfrak{g})$, respectively (after taking $\hbar = 1$). This result echos the
observation (see a review {\cite{Paquettekzldlt2021cij}} and references
therein) that Koszul duality between certain algebras can appear in QFT's with
proper setting.

\subsection{Future directions}

There are many questions to study in the future, we list several of them here.

First, we set up Definition \ref{tqmwobdrcdtn35} for the renormalized theory,
but have not provided a direct criterion to determine whether a given
$I^{\partial}$ satisfies the definition. For TQM on $S^1$, such a direct
criterion has been worked out. (See {\cite[Theorem 3.10 or Theorem
3.22]{2017qtztnalgindexsiliqinli}}. A proof of the algebraic index theorem can
be obtained by studying the effective theory there, see also
{\cite{Guisixukaiidxthm2019ldd}}.) The criterion for our case should be a
modification of that one, taking into account that $\rho (I^{\partial})$ may
have boundary anomaly. Besides, Definition \ref{tqmwobdrcdtn35} excludes
boundary terms (i.e. functionals supported on the boundary) in the interaction
functional, but they are relevant to bulk-boundary correspondence and should
be considered in principle.

Second, the argument leading to Theorem \ref{mainthmtqmefthysdr} relies on the
facts that the spacetime is $1$-dimensional, and the theory is topological. We
hope to find ways to simplify homological perturbation calculations for
general configurations. It could be formulated using Costello's framework. In
the $2$D case, another possible approach is to use a geometric renormalization
method of regularized integral introduced in
{\cite{rglzditglrmsrfcmdlfmlszj2021}} (see also {\cite{guisili2021elliptic}}).

Third, we recognize that the renormalized and effective observable complexes
and their quasi-isomorphisms lie in the category of derived BV algebras, but
we have not touched consequences of this fact yet. Just like BV algebras,
derived BV algebras have quantum master equations associated to them. The
solutions of these equations behave well under derived BV algebra morphisms.
We leave these considerations for later study.

\subsection{Organization of the paper}

The paper is organized as follows.

In Section \ref{sectn2algprp220307} we briefly introduce homological
perturbation theory and BV formalism. As a warm-up, we use them to construct
effective theories of free QFT's (on arbitrary manifolds) and interactive
QFT's on closed manifolds. We perform the free theory calculation without
using Hodge decomposition, hence the result has a more general setting than
that in {\cite{doubek2018quantum}}, and can be used in Section
\ref{sectn4thycmptn220307}. In the interactive case we review the way Feynman
graph formulae arise from homotpy transfer. Moreover, these two warm-up
examples exhibit all essential homological perturbation calculations in later
sections, including the one in Proposition \ref{hopethelastlablngpp513} which
simplifies {\cite[Proposition 1.27]{bandiera2020cumulants}}.

In Section \ref{sctn3defthy220307} we review relevant constructions in
{\cite{rabinovich2021factorization}}, and set up our renormalized theories.
Their effective theories are obtained in Section \ref{sectn4thycmptn220307}.

In Section \ref{sectn5eftdrvdbvalg220307} we review the definition and
homotopy transfer of derived BV algebras based on
{\cite{bandiera2020cumulants}}, then apply them to our constructions in
previous sections.

\begin{acknowledgments*}
  We would like to thank Si Li, Eugene Rabinovich, Ruggero Bandiera, Philsang
  Yoo, Nicolai Reshetikhin, Keyou Zeng, Chi Zhang and Huixing Zhong for
  illuminating discussion. We especially thank Si Li for invaluable
  conversation and guidance on this work. This work was supported by National
  Key Research and Development Program of China (NO. 2020YFA0713000).
\end{acknowledgments*}

\begin{convention*}
  {\tmdummy}
  
  \begin{itemizedot}
    \item Let $V$ be a $\mathbb{Z}$-graded $k$-vector space. We use $V_m$ to
    denote its degree $m$ component. Given homogeneous element $a \in V_m$, we
    let $| a | = m$ be its degree.
    \begin{itemizeminus}
      \item $V [n]$ denotes the degree shifting of $V$ such that $V [n]_m =
      V_{n + m}$.
      
      \item $V^{\ast}$ denotes its linear dual such that $V^{\ast}_m =
      \tmop{Hom}_k (V_{- m}, k)$. Our base field $k$ will mainly be
      $\mathbb{R}$.
      
      \item $\tmop{Sym}^m (V)$ and $\wedge^m (V)$ denote the $m$-th power
      graded symmetric product and graded skew-symmetric product respectively.
      We also denote
      \[ \tmop{Sym} (V) \assign \bigoplus_{m \geqslant 0} \tmop{Sym}^m (V),
         \quad \widehat{\tmop{Sym}} (V) \assign \prod_{m \geqslant 0}
         \tmop{Sym}^m (V) . \]
      The latter is a graded symmetric algebra with the former being its
      subalgebra. We will omit the multiplication mark for this product in
      expressions (unless confusion occurs).
      
      \item $V [[\hbar]], V ((\hbar))$ denote formal power series and Laurent
      series respectively in a variable $\hbar$ valued in $V$.
    \end{itemizeminus}
    \item We use the Einstein summation convention throughout this work.
    
    \item We use $(\pm)_{\tmop{Kos}}$ to represent the sign factors decided by
    Koszul sign rule. We always assume this rule in dealing with graded
    objects.
    \begin{itemizeminus}
      \item Example: let $j$ be a homogeneous linear map on $V$, then
      $j^{\ast}$ denotes the induced linear map on $V^{\ast}$: for $\forall f
      \in V^{\ast}, a \in V$ being homogeneous,
      \[ j^{\ast} f (a) \assign (\pm)_{\tmop{Kos}} f (j (a)) \quad \text{with
         } (\pm)_{\tmop{Kos}} = (- 1)^{| j | | f |} \text{ here.} \]
      \item Example: let $f, g, h \in V^{\ast}$ be homogeneous elements, then
      $f \otimes g \otimes h \in (V^{\ast})^{\otimes 3}$ is regarded as an
      element in $(V^{\otimes 3})^{\ast}$: for $\forall a, b, c \in V$ being
      homogeneous,
      \[ (f \otimes g \otimes h) (a \otimes b \otimes c) \assign
         (\pm)_{\tmop{Kos}} f (a) g (b) h (c) \hspace{1.5em} \text{with }
         (\pm)_{\tmop{Kos}} = (- 1)^{| h | | a | + | h | | b | + | g | | a |}
         \text{ here.} \]
      \item Example: let $(\mathcal{A}, \cdot)$ be a graded algebra, then $[-,
      -]$ means the graded commutator, i.e, for homogeneous elements $a, b$,
      \[ [a, b] \assign a \cdot b - (\pm)_{\tmop{Kos}} b \cdot a \quad
         \text{with } (\pm)_{\tmop{Kos}} = (- 1)^{| a | | b |} \text{ here.}
      \]
    \end{itemizeminus}
    \item We fix an embedding of vector spaces $\tmop{Sym}^m (V)
    \hookrightarrow V^{\otimes m}$ by
    \[ a_1 a_2 \cdots a_m \mapsto \sum_{\sigma \in \mathbf{S}_m}
       (\pm)_{\tmop{Kos}} a_{\sigma (1)} \otimes a_{\sigma (2)} \otimes \cdots
       \otimes a_{\sigma (m)}, \]
    where $\mathbf{S}_m$ denotes the symmetric group. Accordingly, any $f_1
    f_2 \cdots f_m \in \tmop{Sym}^m (V^{\ast})$ is regarded as an element in
    $(\tmop{Sym}^m (V))^{\ast}$: for $\forall a_1 a_2 \cdots a_m \in
    \tmop{Sym}^m (V)$,
    \[ f_1 f_2 \cdots f_m (a_1 a_2 \cdots a_m) = m! \sum_{\sigma \in
       \mathbf{S}_m} (\pm)_{\tmop{Kos}} f_1 (a_{\sigma (1)}) f_2 (a_{\sigma
       (2)}) \cdots f_m (a_{\sigma (m)}) . \]
    \item We call $(V, d)$ a cochain complex if $d$ is a degree $1$ map on the
    graded vector space $V$ such that $d^2 = 0$. Such $d$ is called a
    differential. A cochain map $f : (V, d) \mapsto (W, b)$ is a degree $0$
    map from $V$ to $W$ such that $b f = f d$.
    
    \item Given a manifold $X$, we denote the space of real smooth forms by
    \[ \Omega^{\bullet} (X) = \bigoplus_k \Omega^k (X) \]
    where $\Omega^k (X)$ is the subspace of $k$-forms, lying at degree $k$.
    
    \item Now that we have mentioned differential forms, definitely we will
    work with infinite dimensional functional spaces that carry natural
    topologies. The above notions for $V$ will be generalized as follows. We
    refer the reader to {\cite{treves2006topological}} or {\cite[Appendix
    2]{costellorenormalization}} for further details. Besides, {\cite[Appendix
    A]{rabinovich2021factorization}} contains specialized discussion for
    sections of vector bundles with boundary conditions.
    \begin{itemizeminus}
      \item All topological vector spaces we consider will be nuclear and we
      still use $\otimes$ to denote the completed projective tensor product.
      For example, given two manifolds $X, Y$, we have a canonical isomorphism
      \[ C^{\infty} (X) \otimes C^{\infty} (Y) = C^{\infty} (X \times Y) . \]
      \item In the involved categories, dual space is defined to be the
      continuous linear dual, equipped with the topology of uniform
      convergence of bounded subsets. We still use $(-)^{\ast}$ to denote
      taking such duals.
    \end{itemizeminus}
  \end{itemizedot}
\end{convention*}

\section{Algebraic Preliminaries}\label{sectn2algprp220307}

In this section, we briefly introduce homological perturbation theory and BV
formalism. As a warm-up, we use them to schematically construct perturbative
effective theories of free QFT's (on arbitrary manifolds) and interactive
QFT's on closed manifolds.

\subsection{Homological perturbation theory}

A special deformation retract (SDR for short) is the following data:
\begin{equation}
  (N, b) \begin{array}{c}
    i\\
    \rightleftharpoons\\
    p
  \end{array} (M, d), K \label{sdrinidata}
\end{equation}
where $(N, b)$ and $(M, d)$ are cochain complexes, and $i, p$ are cochain maps
between them. $K$ is a degree $- 1$ map on $M$, such that
\[ p i = 1, \hspace{1.5em} i p = 1 + d K + K d, \hspace{1.5em} p K = 0,
   \hspace{1.5em} K i = 0, \hspace{1.5em} K^2 = 0. \]
Consider a perturbation $\delta$ to the differential on $M$:
\[ d_1 \assign d + \delta, \qquad d_1^2 = 0 \]
we say $\delta$ is a small perturbation if $(1 - \delta K)$ is invertible. For
example, if $\sum_{n = 0}^{+ \infty} (\delta K)^n$ is well defined on $M$,
then $\delta$ is small. Note that $(1 - \delta K)$ being invertible implies
$(1 - K \delta)$ is also invertible, because we can verify that
\[ (1 - K \delta)^{- 1} = 1 + K (1 - \delta K)^{- 1} \delta . \]
\begin{lemma}
  ``Homological Perturbation'' \label{hplhplhpl}
  
  Given an SDR as (\ref{sdrinidata}) and a small perturbation $\delta$, denote
  $A \assign (1 - \delta K)^{- 1} \delta$, the following data is also an SDR:
  \begin{equation}
    (N, b_1) \begin{array}{c}
      i_1\\
      \rightleftharpoons\\
      p_1
    \end{array} (M, d_1 = d + \delta), K_1 \label{ptbdsdrdata}
  \end{equation}
  where
  \[ b_1 = b + p A i, \hspace{1.5em} i_1 = i + K A i, \hspace{1.5em} p_1 = p +
     p A K, \hspace{1.5em} K_1 = K + K A K. \]
  We can also write
  \[ i_1 = (1 - K \delta)^{- 1} i, \hspace{1.5em} p_1 = p (1 - \delta K)^{-
     1}, \hspace{1.5em} K_1 = (1 - K \delta)^{- 1} K = K (1 - \delta K)^{- 1},
  \]
  \[ b_1 = b + p_1 \delta i = b + p \delta i_1 = p_1 d_1 i = p d_1 i_1 . \]
\end{lemma}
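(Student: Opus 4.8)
The plan is to verify directly that the perturbed data in (\ref{ptbdsdrdata}) satisfies every defining relation of an SDR, treating $A \assign (1 - \delta K)^{-1} \delta = \sum_{n \geqslant 0} (\delta K)^n \delta$ as a convergent series (legitimate since $\delta$ is small). First I would record the purely algebraic identities that carry all the weight. The hypothesis $d_1^2 = 0$ is equivalent to the compatibility $d \delta + \delta d + \delta^2 = 0$. From $(1 - \delta K) A = \delta$ and the dual resolvent identity $(1 - K \delta)^{-1} = 1 + K (1 - \delta K)^{-1} \delta$ stated before the lemma, one extracts the two one-sided forms $A = \delta + \delta K A = \delta + A K \delta$, together with $R \assign (1 - \delta K)^{-1} = 1 + \delta K R = 1 + R \delta K$ and its mirror $L \assign (1 - K \delta)^{-1}$. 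The equivalence of the additive formulas with the resummed ones ($i_1 = L i$, $p_1 = p R$, $K_1 = K R = L K$) is then just a reindexing of the series via associativity, e.g. $K (\delta K)^n \delta = (K \delta)^{n + 1}$; and the several forms of $b_1$ follow from $R \delta = \delta L = A$ once I also note the derived consequences $K d i = 0$ and $p d K = 0$ of the unperturbed relations.

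Next I would dispatch the four ``side-condition'' relations, which are the easy part. Expanding $p_1 i_1$, $K_1 i_1$, $p_1 K_1$ and $K_1^2$ in additive form and multiplying out, every cross term contains a factor $p K$, $K i$ or $K^2$, and hence vanishes by $p K = 0$, $K i = 0$, $K^2 = 0$ (together with $p i = 1$); this yields $p_1 i_1 = 1$, $K_1 i_1 = 0$, $p_1 K_1 = 0$ and $K_1^2 = 0$ immediately. In the same spirit I would check that $b_1^2 = 0$ and that $i_1, p_1$ intertwine $b_1$ with $d_1$, i.e. $i_1 b_1 = d_1 i_1$ and $p_1 d_1 = b_1 p_1$, so that $(N, b_1)$ is a complex and $i_1, p_1$ are cochain maps; these reduce to the resolvent identities and the fact that $i, p$ are cochain maps for $d$.

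The substance of the proof is the homotopy relation $i_1 p_1 = 1 + d_1 K_1 + K_1 d_1$, which I expect to be the \emph{main obstacle}. My approach is to expand both sides in additive form, substitute the unperturbed homotopy $i p = 1 + d K + K d$ wherever an $i p$ appears, and collapse the resulting words using $A = \delta + \delta K A = \delta + A K \delta$ and $d \delta + \delta d = - \delta^2$. After cancelling the common $i p - 1$, the target reduces to the single identity
\[ i p A K + K A i p + K A (i p) A K = d K A K + K A K d + \delta K + K \delta + \delta K A K + K A K \delta, \]
where each side is rewritten by inserting $i p = 1 + d K + K d$ once more and repeatedly trading $\delta$ for $A$ through the resolvent identities, the side conditions $p K = 0$, $K i = 0$, $K^2 = 0$ again annihilating the unwanted terms. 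Keeping these telescoping sums under control—recognizing $\delta + \delta K A = A$ buried inside long composites—is the only genuine difficulty; no idea beyond the resolvent identities is required. I would close by observing that smallness of $\delta$ guarantees convergence of every series manipulated above, so all the formal steps are justified.
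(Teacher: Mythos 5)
The paper itself contains no proof of Lemma \ref{hplhplhpl}: immediately after the statement it refers the reader to \cite{crainic2004perturbation}, so there is no in-paper argument to compare against, and your proposal must be judged on its own merits. Judged so, it is correct, and it is essentially the standard direct verification found in the literature. The four side conditions collapse exactly as you say, every cross term dying on $pK$, $Ki$ or $K^2$; and your displayed reduction of the homotopy relation is exactly right: expanding $i_1 p_1$ and $1 + d_1 K_1 + K_1 d_1$ and cancelling $ip = 1 + dK + Kd$ leaves precisely that identity. What your sketch leaves implicit, and what is worth isolating because it finishes not only the homotopy relation but also the intertwining relations and $b_1^2 = 0$ in one stroke, is the single identity (with $R \assign (1 - \delta K)^{-1}$, $L \assign (1 - K\delta)^{-1}$, so $A = R\delta = \delta L$, $AK = R - 1$, $KA = L - 1$):
\[ dA + Ad + A (ip) A \; = \; R\, (d\delta + \delta d + \delta^2)\, L \; = \; 0, \]
the last equality being $d_1^2 = 0$. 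Indeed, after the resolvent simplifications $\delta K + \delta K A K = AK$ and $K\delta + KAK\delta = KA$, the difference of the two sides of your displayed identity is exactly $K\,(dA + Ad + A(ip)A)\,K$, and the checks $d_1 i_1 = i_1 b_1$, $p_1 d_1 = b_1 p_1$, $b_1^2 = 0$ reduce to the same expression sandwiched by other maps. Two small imprecisions to repair: first, in this paper ``small'' means only that $1 - \delta K$ is invertible, not that $\sum_n (\delta K)^n$ converges (the paper gives convergence merely as a sufficient condition), so your closing claim that smallness ``guarantees convergence of every series'' has the implication backwards; the fix is cost-free, since every manipulation you perform uses only the resolvent identities, which follow from invertibility alone. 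Second, the intertwining relations do not reduce to ``the resolvent identities and the fact that $i, p$ are cochain maps for $d$'' alone --- they also need $d\delta + \delta d + \delta^2 = 0$ via the identity above; you did record this hypothesis at the outset, but it must be invoked there as well.
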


We refer the reader to {\cite{crainic2004perturbation}} and references therein
for proof and further discussion of this lemma.

\begin{proposition}
  In the settings of Lemma \ref{hplhplhpl}, the following three statements are
  equivalent:\label{corcorhplpresvproj}
  \begin{enumeratealphacap}
    \item $p \delta K = 0$\label{corhplpresvproj001A}
    
    \item $p_1 = p$\label{corhplpresvproj001B}
    
    \item $p \delta = p \delta i p$\label{corhplpresvproj001C}
  \end{enumeratealphacap}
\end{proposition}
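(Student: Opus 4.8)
The plan is to prove the three equivalences as a cycle $\text{(A)} \Rightarrow \text{(B)} \Rightarrow \text{(C)} \Rightarrow \text{(A)}$, working from the closed-form expressions recorded in Lemma \ref{hplhplhpl} rather than from the perturbative series. The first link should be the most direct. Since $p_1 = p(1 - \delta K)^{-1}$ and $(1 - \delta K)$ is invertible, the identity $p_1 = p$ is equivalent to $p = p(1 - \delta K) = p - p\delta K$, i.e. to $p\delta K = 0$. Thus right-multiplying by $(1 - \delta K)^{\pm 1}$ yields \textbf{both} directions of (A) $\Leftrightarrow$ (B) at once, with no extra input.

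For (C) $\Rightarrow$ (A) I would simply right-multiply the identity $p\delta = p\delta i p$ by $K$ and invoke the SDR relation $pK = 0$: the right-hand side becomes $p\delta i (pK) = 0$, leaving $p\delta K = 0$. This is a one-line computation that uses none of the perturbation data.

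The substantive step, and the one I expect to be the main obstacle, is (B) $\Rightarrow$ (C) (equally (A) $\Rightarrow$ (C)). Expanding $p\delta i p$ directly through $ip = 1 + dK + Kd$ gives $p\delta + p\delta dK + p\delta Kd$; under (A) the last term dies, but the term $p\delta dK$ does not obviously vanish, and trying to remove it via $d_1^2 = 0$ (equivalently $\delta d + d\delta + \delta^2 = 0$, together with $bp = pd$) only circles back to a tautology. The cleaner route, which I would take instead, is to read (C) as the statement that $p$ intertwines the \emph{perturbed} differentials. Assuming (B), the map $p_1 = p$ is a cochain map $(M, d_1) \to (N, b_1)$, so $b_1 p_1 = p_1 d_1$; substituting $p_1 = p$, $d_1 = d + \delta$, and $b_1 = b + p_1 \delta i = b + p\delta i$ from Lemma \ref{hplhplhpl}, and cancelling the unperturbed relation $bp = pd$, collapses $b_1 p = p d_1$ to precisely $p\delta i p = p\delta$, which is (C). The crux is therefore the realization that (C) is nothing but the compatibility of $p$ with the new differentials, so it is forced by (B) and the shape of $b_1$, rather than being something one must extract by juggling $K$; once this is seen, the cycle closes immediately.
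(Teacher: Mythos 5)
Your proof is correct. The paper does not actually supply an argument for this proposition --- it is left as an exercise, with the hint that it ``can be checked using the formulae in Lemma \ref{hplhplhpl}'' --- and your cycle $\text{(A)}\Rightarrow\text{(B)}\Rightarrow\text{(C)}\Rightarrow\text{(A)}$ does exactly that: the equivalence of (A) and (B) via $p_1 = p(1-\delta K)^{-1}$, the implication $\text{(C)}\Rightarrow\text{(A)}$ via $pK=0$, and the key observation that (C) is just the cochain-map identity $b_1 p_1 = p_1 d_1$ specialized to $p_1 = p$ and $b_1 = b + p\delta i$, are all valid and constitute precisely the kind of verification the authors intended.
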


\begin{proposition}
  In the settings of Lemma \ref{hplhplhpl}, the following three statements are
  equivalent:\label{corcorhplpresvinj}
  \begin{enumeratealphacap}
    \item $K \delta i = 0$\label{corhplpresvinj001A}
    
    \item $i_1 = i$\label{corhplpresvinj001B}
    
    \item $i p \delta i = \delta i$\label{corhplpresvinj001C}
  \end{enumeratealphacap}
\end{proposition}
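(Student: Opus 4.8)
The plan is to prove the cycle of implications $\text{(A)} \Rightarrow \text{(B)} \Rightarrow \text{(C)} \Rightarrow \text{(A)}$, leaning on the closed formula $i_1 = (1 - K\delta)^{-1} i$ from Lemma \ref{hplhplhpl} wherever possible. This is the exact mirror of the proof one would give for Proposition \ref{corcorhplpresvproj}, which is the analogous statement for $p$ and $p_1$, so I would keep the two arguments parallel.

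First I would settle (A) $\Leftrightarrow$ (B), which is purely formal. Since $1 - K\delta$ is invertible, applying it on the left of $i_1 = (1-K\delta)^{-1} i$ shows that $i_1 = i$ is equivalent to $(1 - K\delta) i = i$, i.e. to $K\delta i = 0$. No SDR side relations beyond the formula for $i_1$ enter here. Next, (C) $\Rightarrow$ (A) is immediate from the contraction identity $K i = 0$: applying $K$ on the left of $i p\delta i = \delta i$ annihilates the left-hand side, since $K i p \delta i = (K i) p \delta i = 0$, and leaves $K\delta i = 0$.

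The only implication with genuine content is (B) $\Rightarrow$ (C), and this is where I expect the single point of subtlety. The clean route is to observe that, by Lemma \ref{hplhplhpl}, the perturbed data (\ref{ptbdsdrdata}) is \emph{unconditionally} again an SDR, so $i_1$ is a cochain map $(N, b_1) \to (M, d_1)$, that is $d_1 i_1 = i_1 b_1$. Substituting the hypothesis $i_1 = i$, together with $b_1 = b + p\delta i_1 = b + p\delta i$ and $d_1 = d + \delta$, gives $(d + \delta) i = i(b + p\delta i)$; cancelling $d i = i b$ (the cochain-map property of $i$ for the original SDR) leaves exactly $\delta i = i p \delta i$, which is (C).

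The thing to get right is that the cochain-map property of $i_1$ holds before imposing any of (A)--(C), so invoking it is not circular. The alternative of deriving (C) directly from $K\delta i = 0$ would instead force the use of $d_1^2 = 0$, hence $d\delta + \delta d + \delta^2 = 0$, and one would then have to control the term $K\delta^2 i$, which is precisely the obstruction the cochain-map argument sidesteps. Chaining (A) $\Leftrightarrow$ (B), (B) $\Rightarrow$ (C), and (C) $\Rightarrow$ (A) then closes the loop and establishes the equivalence of all three statements.
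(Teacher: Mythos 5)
Your proof is correct, and it is exactly the verification the paper intends: the paper leaves this proposition (and its twin, Proposition \ref{corcorhplpresvproj}) as an exercise to be ``checked using the formulae in Lemma \ref{hplhplhpl}'', which is precisely what you do — the closed formula $i_1 = (1-K\delta)^{-1}i$ for (A)$\Leftrightarrow$(B), the side relation $Ki=0$ for (C)$\Rightarrow$(A), and the formula $b_1 = b + p\delta i_1$ together with the unconditional cochain-map property $d_1 i_1 = i_1 b_1$ of the perturbed SDR for (B)$\Rightarrow$(C). Your remark that a direct derivation of (C) from (A) would stall on the term $K\delta^2 i$ (since $d_1^2=0$ only gives $Kd\delta i = -K\delta^2 i$ after using $K\delta i = 0$) is accurate, and the cochain-map argument legitimately sidesteps it without circularity.
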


These two propositions can be checked using the formulae in Lemma
\ref{hplhplhpl}. We leave it as an exercise.

\begin{lemma}
  ``Associativity of Homological Perturbation'' \label{1stcmpstnhpllemma}
  
  Given initial data written as (\ref{sdrinidata}), suppose there are two
  small perturbations to $d$:
  \[ d_1 \assign d + \delta_1, \qquad d_2 \assign d + (\delta_1 + \delta_2),
  \]
  where the former induces an SDR written as (\ref{ptbdsdrdata}). Then,
  $\delta_2$ must be small with respect to (\ref{ptbdsdrdata}), because we can
  verify
  \[ (1 - \delta_2 K_1)^{- 1} = (1 - \delta_1 K) (1 - (\delta_1 + \delta_2)
     K)^{- 1} . \]
  Denote
  \[ A_2 \assign (1 - (\delta_1 + \delta_2) K)^{- 1} (\delta_1 + \delta_2),
     \qquad A_2' \assign (1 - \delta_2 K_1)^{- 1} \delta_2 . \]
  For the perturbed differential $d_2$, we have the ``one-step perturbation''
  to (\ref{sdrinidata}):
  \[ (N, b_2) \begin{array}{c}
       i_2\\
       \rightleftharpoons\\
       p_2
     \end{array} (M, d_2), K_2 \]
  where
  \[ b_2 = b + p A_2 i, \hspace{1.5em} i_2 = i + K A_2 i, \hspace{1.5em} p_2 =
     p + p A_2 K, \hspace{1.5em} K_2 = K + K A_2 K. \]
  We also have the ``two-step perturbation'' to (\ref{sdrinidata}):
  \[ (N, b_2') \begin{array}{c}
       i_2'\\
       \rightleftharpoons\\
       p_2'
     \end{array} (M, d_2), K_2' \]
  where
  \[ b_2' = b_1 + p_1 A_2' i_1, \hspace{1.5em} i_2' = i_1 + K_1 A_2' i_1,
     \hspace{1.5em} p_2' = p_1 + p_1 A_2' K_1, \hspace{1.5em} K_2' = K_1 + K_1
     A_2' K_1 . \]
  Then the conclusion is that these two perturbed SDR's actually coincide.
\end{lemma}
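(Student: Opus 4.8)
The plan is to avoid expanding the operators $A_2$ and $A_2'$ directly and instead to push everything through the closed-form expressions recorded at the end of Lemma~\ref{hplhplhpl}, reducing both the one-step and the two-step data to the \emph{same} expressions in the original retract $(N,b)\rightleftharpoons(M,d),K$. Write $\delta \assign \delta_1 + \delta_2$, so $d_2 = d + \delta$. Feeding the total perturbation $\delta$ into those closed forms produces the one-step data
\[ p_2 = p(1-\delta K)^{-1}, \qquad i_2 = (1-K\delta)^{-1} i, \qquad K_2 = (1-K\delta)^{-1}K, \]
while feeding the perturbation $\delta_2$ into the already-perturbed SDR (\ref{ptbdsdrdata}) produces $p_2' = p_1(1-\delta_2 K_1)^{-1}$, $i_2' = (1-K_1\delta_2)^{-1} i_1$ and $K_2' = (1-K_1\delta_2)^{-1}K_1$. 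The whole claim then reduces to checking that these two descriptions agree, map by map.

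The crux will be a pair of resolvent identities that make the intermediate factors telescope. The first is handed to us in the statement, namely $(1-\delta_2 K_1)^{-1} = (1-\delta_1 K)(1-\delta K)^{-1}$. I would establish its mirror image,
\[ (1-K_1\delta_2)^{-1} = (1-K\delta)^{-1}(1-K\delta_1), \]
as follows: since $K_1 = (1-K\delta_1)^{-1}K$, we have $(1-K\delta_1)K_1 = K$, hence $(1-K\delta_1)(1-K_1\delta_2) = 1 - K\delta_1 - (1-K\delta_1)K_1\delta_2 = 1 - K\delta_1 - K\delta_2 = 1 - K\delta$, and inverting gives the identity. Both inverses are legitimate because $\delta_2$ is small with respect to (\ref{ptbdsdrdata}), which is exactly what the statement verifies.

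With the two identities in hand the three maps collapse immediately. Using $p_1 = p(1-\delta_1 K)^{-1}$,
\[ p_2' = p(1-\delta_1 K)^{-1}(1-\delta_1 K)(1-\delta K)^{-1} = p(1-\delta K)^{-1} = p_2, \]
and, using $i_1 = (1-K\delta_1)^{-1} i$ together with the mirror identity,
\[ i_2' = (1-K\delta)^{-1}(1-K\delta_1)(1-K\delta_1)^{-1} i = (1-K\delta)^{-1} i = i_2; \]
the same cancellation applied to $K_1 = (1-K\delta_1)^{-1}K$ yields $K_2' = (1-K\delta)^{-1}K = K_2$. For the transferred differentials I would argue that they agree for free: in any SDR the section relation $p\,i = 1$ and the cochain-map relation $i\,b = d\,i$ force $b = p\,i\,b = p\,d\,i$, so applying this to both outputs gives $b_2 = p_2\,d_2\,i_2 = p_2'\,d_2\,i_2' = b_2'$ once the equalities above and $d_2 = d+\delta$ are known.

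The main obstacle is entirely contained in the two resolvent identities. Because $K$, $\delta_1$ and $\delta_2$ do not commute, the telescoping must respect operator order, and one must be careful about which SDR each inverse is small relative to; the clean relation $(1-K\delta_1)K_1 = K$ (and its transpose $K_1(1-\delta_1 K) = K$) is what makes the bookkeeping work. Once these identities are secured, every remaining comparison is a one-line cancellation, and the equality of the $b$-components needs no separate computation at all.
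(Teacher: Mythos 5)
Your proof is correct. There is nothing in the paper to compare it against: the authors state only that ``The proof of this lemma is not hard and we leave it as an exercise,'' so your argument stands on its own merits, and it is complete. Reducing everything to the closed resolvent forms of Lemma \ref{hplhplhpl}, namely $p_2 = p(1-\delta K)^{-1}$, $i_2 = (1-K\delta)^{-1}i$, $K_2 = (1-K\delta)^{-1}K$ with $\delta = \delta_1+\delta_2$ (and their analogues for the two-step data), and then telescoping via the pair of resolvent identities is exactly the right mechanism; your verification of the mirror identity, from $(1-K\delta_1)K_1 = K$ giving $(1-K\delta_1)(1-K_1\delta_2) = 1-K\delta$, is sound, and the cancellations $p_2'=p_2$, $i_2'=i_2$, $K_2'=K_2$ follow in one line each. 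Your shortcut for the differentials is also valid: in any SDR, $pi=1$ and $di=ib$ force $b=pdi$, so $b_2 = p_2 d_2 i_2 = p_2' d_2 i_2' = b_2'$ requires no separate computation. One small tightening: you take the first identity $(1-\delta_2 K_1)^{-1} = (1-\delta_1 K)\bigl(1-(\delta_1+\delta_2)K\bigr)^{-1}$ as ``handed to us in the statement,'' but the lemma only asserts it (``we can verify''), so a self-contained proof should check it; fortunately your own computation does so verbatim on the other side, since the transpose relation $K_1(1-\delta_1 K)=K$ you already noted gives $(1-\delta_2 K_1)(1-\delta_1 K) = 1-(\delta_1+\delta_2)K$, which simultaneously establishes that $\delta_2$ is small with respect to (\ref{ptbdsdrdata}) rather than assuming it.
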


The proof of this lemma is not hard and we leave it as an exercise.

\begin{lemma}
  \label{dualcnstrctnsdrlm25}``Dual Construction''
  
  Given data written as (\ref{sdrinidata}), take the duals, then the following
  is an SDR:
  \[ (N^{\ast}, b^{\ast}) \begin{array}{c}
       p^{\ast}\\
       \rightleftharpoons\\
       i^{\ast}
     \end{array} (M^{\ast}, d^{\ast}), \quad - K^{\ast} \]
\end{lemma}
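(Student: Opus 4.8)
The plan is to verify directly that the dualized data satisfies the five defining relations of an SDR, reducing everything to a single bookkeeping identity for how dualization interacts with composition of graded maps. First I would record that, under the Koszul convention fixed in the paper, dualization reverses composition with a sign: for homogeneous maps $j, k$ one has
\[
(j k)^{\ast} = (-1)^{|j||k|}\, k^{\ast} j^{\ast},
\]
and that $(-)^{\ast}$ preserves degree (so $b^{\ast}, d^{\ast}$ are again degree $1$ and $K^{\ast}$ is degree $-1$, whence $-K^{\ast}$ is the candidate homotopy). This identity is an immediate consequence of the defining formula $j^{\ast} f(a) = (-1)^{|j||f|} f(j a)$ together with $|j^{\ast} f| = |j| + |f|$, and it is essentially the only computation on which the whole argument rests.

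With this in hand, the cochain-complex axioms follow formally. Applying the identity to $d^2 = 0$ gives $(d^2)^{\ast} = (-1)^{|d|^2}(d^{\ast})^2 = -(d^{\ast})^2$, so $(d^{\ast})^2 = 0$, and likewise $(b^{\ast})^2 = 0$. Dualizing the cochain-map relations $b p = p d$ and $d i = i b$ (all maps here in degree $0$, so no signs appear) yields $p^{\ast} b^{\ast} = d^{\ast} p^{\ast}$ and $i^{\ast} d^{\ast} = b^{\ast} i^{\ast}$, which say precisely that $p^{\ast} \colon (N^{\ast}, b^{\ast}) \to (M^{\ast}, d^{\ast})$ and $i^{\ast} \colon (M^{\ast}, d^{\ast}) \to (N^{\ast}, b^{\ast})$ are cochain maps. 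Thus $p^{\ast}$ takes the role of the inclusion and $i^{\ast}$ the role of the projection in the new data.

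It remains to check the five SDR relations, which I would obtain one by one by dualizing the original five. Since $i, p$ are in degree $0$, the relation $p i = 1$ dualizes without sign to $i^{\ast} p^{\ast} = 1$. The relations $K i = 0$ and $p K = 0$ dualize (again with trivial sign, as $|i| = |p| = 0$) to $i^{\ast} K^{\ast} = 0$ and $K^{\ast} p^{\ast} = 0$; carrying along the minus in the new homotopy gives exactly $i^{\ast}(-K^{\ast}) = 0$ and $(-K^{\ast}) p^{\ast} = 0$, i.e. the two annihilation conditions. For $K^2 = 0$ the identity produces $(K^2)^{\ast} = (-1)^{|K|^2}(K^{\ast})^2 = -(K^{\ast})^2$, hence $(-K^{\ast})^2 = (K^{\ast})^2 = 0$.

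The one place the signs genuinely intervene, and the step I expect to be the crux, is the homotopy relation $i p = 1 + d K + K d$. Here $|K| = -1$ is odd, so dualizing the two mixed terms flips their signs: $(d K)^{\ast} = -K^{\ast} d^{\ast}$ and $(K d)^{\ast} = -d^{\ast} K^{\ast}$. Hence
\[
p^{\ast} i^{\ast} = 1 - K^{\ast} d^{\ast} - d^{\ast} K^{\ast} = 1 + d^{\ast}(-K^{\ast}) + (-K^{\ast}) d^{\ast},
\]
which is precisely the homotopy identity for the retract with differential $d^{\ast}$ and homotopy $-K^{\ast}$. This also explains why the dual homotopy must be $-K^{\ast}$ rather than $+K^{\ast}$: the odd degree of $K$ forces the minus sign needed to realign the dualized mixed terms with $d^{\ast}(-K^{\ast}) + (-K^{\ast}) d^{\ast}$. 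Once this relation is established, all five SDR conditions hold for $(N^{\ast}, b^{\ast}) \rightleftharpoons (M^{\ast}, d^{\ast})$ with homotopy $-K^{\ast}$, completing the proof.
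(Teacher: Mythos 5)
Your proof is correct, and it is exactly the verification the paper has in mind: the paper itself skips the proof as ``straightforward,'' and your direct check of the five SDR axioms via the sign rule $(jk)^{\ast} = (-1)^{|j||k|}\, k^{\ast} j^{\ast}$ (consistent with the paper's Koszul convention $j^{\ast}f = (-1)^{|j||f|} f\circ j$) is the intended argument. Your identification of the crux — that the odd degree of $K$ flips the signs of $(dK)^{\ast}$ and $(Kd)^{\ast}$, which is precisely what forces the homotopy to be $-K^{\ast}$ rather than $K^{\ast}$ — correctly isolates the only nontrivial point.
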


The verification is straightforward and we skip it. This taking dual operation
commutes with homological perturbation.

\begin{lemma}
  ``Symmetric Tensor Power Construction'' (See e.g., {\cite[Section
  5]{2014hplalgoperad}}.)\label{lemsymsdrcnstrctn}
  
  Given data written as (\ref{sdrinidata}), the following is an SDR:
  \begin{equation}
    (\tmop{Sym} (N), b^{\tmop{der}}) \begin{array}{c}
      i^{\tmop{sym}}\\
      \rightleftharpoons\\
      p^{\tmop{sym}}
    \end{array} (\tmop{Sym} (M), d^{\tmop{der}}), K^{\tmop{sym}}
    \label{tnsrpwrsdr}
  \end{equation}
  where $i^{\tmop{sym}}, p^{\tmop{sym}}$ are algebraic maps extended from $i,
  p$:
  \[ i^{\tmop{sym}} = \sum_{n \geqslant 0} i^{\otimes n}, \qquad
     p^{\tmop{sym}} = \sum_{n \geqslant 0} p^{\otimes n}, \]
  and $b^{\tmop{der}}, d^{\tmop{der}}$ are derivations extended from $b, d$
  using Leibniz rule:
  \[ b^{\tmop{der}} = \sum_{n \geqslant 1} \sum_{m = 0}^{n - 1} 1^{\otimes m}
     \otimes b \otimes 1^{\otimes (n - m - 1)}, \qquad d^{\tmop{der}} =
     \sum_{n \geqslant 1} \sum_{m = 0}^{n - 1} 1^{\otimes m} \otimes d \otimes
     1^{\otimes (n - m - 1)}, \]
  and
  \begin{eqnarray*}
    K^{\tmop{sym}} & = & \sum_{n \geqslant 1} \frac{1}{n!} \sum_{\sigma \in
    \mathbf{S}_n} \sigma^{- 1} \left( \sum_{m = 0}^{n - 1} 1^{\otimes m} K
    \otimes \pi^{\otimes (n - m - 1)} \right) \sigma\\
    & = & q K^{\tmop{der}} = K^{\tmop{der}} q,
  \end{eqnarray*}
  with $\pi \assign i p$, $\sigma \in \mathbf{S}_n$ permuting the tensor
  factors of $\tmop{Sym}^n (M)$, $K^{\tmop{der}}$ being the derivation
  extended from $K$, and
  \[ q \assign \sum_{n \geqslant 1} \sum_{\epsilon \in \{ 0, 1 \}^n}^{|
     \epsilon | < n} \frac{| \epsilon | ! (n - 1 - | \epsilon |) !}{n!}
     \pi^{\epsilon_1} \otimes \pi^{\epsilon_2} \otimes \cdots \otimes
     \pi^{\epsilon_n}, \text{ with } | \epsilon | = \epsilon_1 + \cdots +
     \epsilon_n . \]

  The above statement remains unchanged if we replace $\tmop{Sym} (N)$ and
  $\tmop{Sym} (M)$ by $\widehat{\tmop{Sym}} (N)$ and $\widehat{\tmop{Sym}}
  (M)$, respectively.
\end{lemma}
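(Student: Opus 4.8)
The plan is to reduce the statement to the elementary tensor-product construction of SDR's and then pass to the symmetric part by averaging over $\mathbf{S}_n$. First I would record the binary tensor trick: given two SDR's, the tensor product $(N_1 \otimes N_2) \rightleftharpoons (M_1 \otimes M_2)$ is again an SDR with contracting homotopy $1 \otimes K_2 + K_1 \otimes \pi_2$, where $\pi_j \assign i_j p_j$; this is a direct check of the five SDR relations, the only delicate point being the Koszul signs, which are forced once $K$ is declared to have degree $-1$. Iterating this construction on $n$ copies of (\ref{sdrinidata}) yields, by induction, an SDR on $M^{\otimes n}$ with structure maps $i^{\otimes n}, p^{\otimes n}$, differential the restriction of $d^{\tmop{der}}$, and homotopy exactly $K^{(n)} \assign \sum_{m = 0}^{n - 1} 1^{\otimes m} \otimes K \otimes \pi^{\otimes (n - m - 1)}$. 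From $p i = 1$ and the derivation property one reads off $p^{\tmop{sym}} i^{\tmop{sym}} = 1$ and that $i^{\tmop{sym}}, p^{\tmop{sym}}$ are cochain maps, while $(d^{\tmop{der}})^2 = 0$ and $(b^{\tmop{der}})^2 = 0$ hold because a derivation that squares to $d^2 = 0$ (resp. $b^2 = 0$) on generators vanishes identically.

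The second step is symmetrization. The group $\mathbf{S}_n$ acts on $M^{\otimes n}$ (with Koszul signs) commuting with $i^{\otimes n}, p^{\otimes n}$ and $d^{\tmop{der}}$, so these descend to $\tmop{Sym}^n$; the homotopy $K^{(n)}$ is not equivariant, which is precisely why I replace it by its average $K^{\tmop{sym}} = \frac{1}{n!} \sum_{\sigma} \sigma^{- 1} K^{(n)} \sigma$. Averaging the tensor-power homotopy identity $i^{\otimes n} p^{\otimes n} = 1 + d^{\tmop{der}} K^{(n)} + K^{(n)} d^{\tmop{der}}$ over $\mathbf{S}_n$ and using equivariance of $i^{\otimes n} p^{\otimes n}$, $1$ and $d^{\tmop{der}}$ produces the corresponding identity for $K^{\tmop{sym}}$, which then restricts to $\tmop{Sym}^n$. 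The side conditions $p^{\tmop{sym}} K^{\tmop{sym}} = 0$ and $K^{\tmop{sym}} i^{\tmop{sym}} = 0$ survive averaging for a simpler reason: since $p K = 0$ and $K i = 0$, every summand of $p^{\otimes n} K^{(n)}$ and of $K^{(n)} i^{\otimes n}$ already vanishes term by term, and equivariance of $p^{\otimes n}, i^{\otimes n}$ pushes this through the average.

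It remains to prove $(K^{\tmop{sym}})^2 = 0$ and to identify $K^{\tmop{sym}}$ with $q K^{\tmop{der}} = K^{\tmop{der}} q$; I would obtain both from the relations $\pi^2 = \pi$ and $K \pi = \pi K = 0$ (consequences of $p i = 1$, $p K = 0$, $K i = 0$). In each tensor slot $K$ and $\pi$ annihilate one another, so in $q K^{\tmop{der}}$ a slot carrying $\pi$ kills the slot carrying $K$; this shows at once that $q K^{\tmop{der}}$ and $K^{\tmop{der}} q$ coincide, both collapsing to the sum over configurations with $K$ in one slot and $1$ or $\pi$ in the others, the $K$-slot never carrying $\pi$. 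A direct count of how many $\sigma \in \mathbf{S}_n$ place $K$ in a prescribed slot and $\pi$ in a prescribed set of $| \epsilon |$ slots gives the multiplicity $| \epsilon | ! \, (n - 1 - | \epsilon |) !$, which after dividing by $n!$ matches the coefficient in $q$; this is exactly $K^{\tmop{sym}} = q K^{\tmop{der}}$. Since $K^{\tmop{der}}$ is an odd derivation with $K^2 = 0$ on generators, $(K^{\tmop{der}})^2 = 0$, whence $(K^{\tmop{sym}})^2 = (q K^{\tmop{der}}) (K^{\tmop{der}} q) = q (K^{\tmop{der}})^2 q = 0$. Finally, because every operator above preserves the word-length grading by $\tmop{Sym}^n$, the whole argument is diagonal in $n$ and applies verbatim with $\prod_n$ in place of $\bigoplus_n$, giving the $\widehat{\tmop{Sym}}$ version.

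I expect the main obstacle to be essentially bookkeeping: fixing the Koszul signs in the binary homotopy and in the $\mathbf{S}_n$-action, and carrying out the multiplicity count cleanly. The conceptual content — that symmetrization preserves all the SDR relations — is light once the annihilation relations $K \pi = \pi K = 0$ are in hand, since these are exactly what make the $q$-factorization and the vanishing of $(K^{\tmop{sym}})^2$ transparent.
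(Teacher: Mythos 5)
Your proof is correct, but it takes a genuinely different route from the paper, which in fact offers no proof of this lemma at all: it defers to the cited reference (Berglund, Section 5), where the transferred contraction on $\tmop{Sym}(M)$ arises as a special case of homological perturbation theory for algebras over operads. Your argument is a self-contained elementary verification: the binary contraction with homotopy $1 \otimes K_2 + K_1 \otimes \pi_2$, iterated to $K^{(n)} = \sum_{m} 1^{\otimes m} \otimes K \otimes \pi^{\otimes (n-m-1)}$ on $M^{\otimes n}$, then averaged over $\mathbf{S}_n$; equivariance of $i^{\otimes n}$, $p^{\otimes n}$, $d^{\tmop{der}}$ makes the homotopy identity descend to the symmetric part, the side conditions $p^{\tmop{sym}} K^{\tmop{sym}} = 0$ and $K^{\tmop{sym}} i^{\tmop{sym}} = 0$ hold term by term from $pK = 0$, $Ki = 0$, and the annihilation relations $\pi K = K\pi = 0$ collapse both $q K^{\tmop{der}}$ and $K^{\tmop{der}} q$ to the same sum over configurations with one $K$-slot never carrying $\pi$. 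Your multiplicity count is right: for a configuration with $K$ in a fixed slot and $\pi$ on a fixed set of $|\epsilon|$ slots, the number of pairs $(\sigma, m)$ producing it forces $m = n-1-|\epsilon|$ and gives $|\epsilon|!\,(n-1-|\epsilon|)!$ permutations, matching the coefficient in $q$; and $(K^{\tmop{sym}})^2 = q (K^{\tmop{der}})^2 q = 0$ then follows since the square of the odd derivation $K^{\tmop{der}}$ vanishes on generators. The sign bookkeeping you flag does work out: conjugating a tensor-product operator with a single odd factor by a signed permutation yields exactly the slot-permuted operator with no extra sign, as the Koszul conventions absorb everything. What the paper's citation buys is generality (transfer for algebras over arbitrary operads, with the commutative case read off); what your route buys is that every formula in the statement, in particular the $q$-factorization of $K^{\tmop{sym}}$ that the paper exploits repeatedly in later computations, is verified by hand.
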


It is direct to see that taking symmetric tensor power commutes with taking
dual. For the sake of brevity, in the rest of this paper {\tmstrong{we will
just use}} $b, d, i, p$ {\tmstrong{to denote}} $b^{\tmop{der}},
d^{\tmop{der}}, i^{\tmop{sym}}, p^{\tmop{sym}}$ in symmetric tensor power
constructions if there is no ambiguity.

\subsubsection*{Perturbation by conjugation}

Given an SDR as (\ref{sdrinidata}), consider a conjugation on $M$
\[ (M, d) \begin{array}{c}
     U\\
     \rightleftharpoons\\
     U^{- 1}
   \end{array} (M, U d U^{- 1}) \]
Denote $d_U \assign U d U^{- 1}$. If $(d_U - d)$ is a small perturbation with
respect to the initial data (\ref{sdrinidata}), we then write the perturbed
SDR as
\[ (N, b_U') \begin{array}{c}
     i_U'\\
     \rightleftharpoons\\
     p_U'
   \end{array} (M, d_U), K_U' \]
\begin{proposition}
  In the above setting, the following statements are
  equivalent:\label{conjuptbtnprojcomcase}
  \begin{enumeratealphacap}
    \item There is a conjugation on $N$\label{conjupropprojA}
    \[ (N, b) \begin{array}{c}
         W\\
         \rightleftharpoons\\
         W^{- 1}
       \end{array} (N, W b W^{- 1}) \]
    such that
    \[ b'_U = W b W^{- 1} \qquad \text{and} \qquad p_U' = W p U^{- 1} \]
    \item The invertible map $U$ satisfies\label{conjupropprojB}
    \[ p U^{- 1} K = 0, \qquad \text{and} \qquad (p U^{- 1} i)^{- 1} \text{
       exists on } N. \]
  \end{enumeratealphacap}
  If \ref{conjupropprojA} (hence also \ref{conjupropprojB}) holds, then there
  must be
  \[ W = (p U^{- 1} i)^{- 1} . \]
\end{proposition}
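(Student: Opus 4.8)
The plan is to prove both implications simultaneously and, along the way, to see why $W$ is forced to equal $(pU^{-1}i)^{-1}$. The starting point is two identities valid for \emph{any} perturbed projection produced by Lemma \ref{hplhplhpl}, independent of the conjugation structure. Writing $p'_U = p + pAK$ with $A = (1-\delta K)^{-1}\delta$ and $\delta = d_U - d$, one reads off
\[ p'_U i = pi + pA(Ki) = 1, \qquad p'_U K = pK + pA K^2 = 0, \]
using only $pi = 1$, $Ki = 0$, $pK = 0$, $K^2 = 0$ from (\ref{sdrinidata}). These two facts do most of the work.

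For \ref{conjupropprojA} $\Rightarrow$ \ref{conjupropprojB}, I would take the assumed relation $p'_U = WpU^{-1}$ with $W$ invertible, rewrite it as $pU^{-1} = W^{-1}p'_U$, and post-compose with $K$ and with $i$. The first identity above gives $pU^{-1}K = W^{-1}(p'_U K) = 0$, and the second gives $pU^{-1}i = W^{-1}(p'_U i) = W^{-1}$. This at once establishes both conditions in \ref{conjupropprojB} (invertibility of $pU^{-1}i$ being witnessed by $W$) and pins down $W = (pU^{-1}i)^{-1}$.

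For the converse \ref{conjupropprojB} $\Rightarrow$ \ref{conjupropprojA}, I would set $\Phi \assign pU^{-1}i$, define the degree-zero invertible map $W \assign \Phi^{-1}$, and verify the two required equalities. For the projection identity $p'_U = WpU^{-1}$, since $(1-\delta K)$ is invertible it suffices to prove the equivalent statement $(pU^{-1}i)\,p = pU^{-1}(1-\delta K)$: expanding the left side by $ip = 1 + dK + Kd$ and discarding the $pU^{-1}K$ terms by hypothesis leaves $pU^{-1} + pU^{-1}dK$, while on the right $pU^{-1}\delta K = (bpU^{-1} - pU^{-1}d)K = -pU^{-1}dK$ (using $d_U = UdU^{-1}$, $pd = bp$, and $pU^{-1}K = 0$) produces the same expression. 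For the differential identity I would use the form $b'_U = b + p'_U\delta i$ from Lemma \ref{hplhplhpl}; substituting $p'_U = WpU^{-1}$ and computing $pU^{-1}\delta i = b\Phi - \Phi b$ (via $di = ib$) gives $b'_U = b + \Phi^{-1}(b\Phi - \Phi b) = \Phi^{-1}b\Phi = WbW^{-1}$, which is precisely the conjugation claim.

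I expect the only genuine computation to be the projection identity in the converse direction, where one must manipulate the resolvent $(1-\delta K)^{-1}$ and exploit the precise form $\delta = UdU^{-1} - d$ rather than treating $\delta$ as a generic small perturbation; everything else is a short rearrangement of the SDR axioms. It is worth flagging that $pU^{-1}K = 0$ gets used twice — once to kill $pU^{-1}Kd$ on the left and once to kill $bpU^{-1}K$ on the right — so neither part of \ref{conjupropprojB} is superfluous.
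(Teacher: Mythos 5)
Your proof is correct, and your forward direction coincides with the paper's: both read off $p'_U i = 1$ and $p'_U K = 0$ from the perturbation formula for $p'_U$ and conclude $pU^{-1}K = 0$, $pU^{-1}i = W^{-1}$, hence $W = (pU^{-1}i)^{-1}$. In the converse direction you take a genuinely different route. The paper keeps the perturbed data in play: after noting $pU^{-1}i = pU^{-1}i'_U$ (valid since $i'_U - i$ has $K$ as a left factor), it computes $p'_U - (pU^{-1}i)^{-1}pU^{-1} = (pU^{-1}i)^{-1}pU^{-1}(i'_U p'_U - 1) = (pU^{-1}i)^{-1}pU^{-1}(d_U K'_U + K'_U d_U)$ and kills both terms using $pU^{-1}K'_U = 0$ and $pU^{-1}d_U K'_U = bpU^{-1}K'_U = 0$; the key input is the SDR identity satisfied by the \emph{perturbed} triple $(i'_U, p'_U, K'_U)$. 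You instead eliminate $i'_U$ and $K'_U$ altogether: from the closed form $p'_U = p(1-\delta K)^{-1}$ you reduce the projection identity to $(pU^{-1}i)\,p = pU^{-1}(1-\delta K)$, a statement purely about the unperturbed maps, and verify it by expanding $ip = 1 + dK + Kd$ and the explicit $\delta = UdU^{-1}-d$. Both arguments consume the same hypotheses ($pU^{-1}K=0$ used twice, $pd = bp$, invertibility of $pU^{-1}i$ and of $1-\delta K$); the paper's is slightly slicker in that it never manipulates the resolvent, while yours is more self-contained, needing only the formulae for $p_1$ and $b_1$ from Lemma \ref{hplhplhpl} and not the assertion that the perturbed data again forms an SDR. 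Your derivation of the differential identity via $b'_U = b + p'_U\delta i$ is an equivalent one-line variant of the paper's $b'_U = p'_U d_U i$.
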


\begin{proof}
  Assume statement \ref{conjupropprojA} holds. By the formula for perturbed
  projection map in Lemma \ref{hplhplhpl}, it is easy to observe that $p'_U K
  = 0$ and $p'_U i = 1$. So $p U^{- 1} K = W^{- 1} p'_U K = 0$, $W (p U^{- 1}
  i) = p'_U i = 1$. This means statement \ref{conjupropprojB} holds, and $W =
  (p U^{- 1} i)^{- 1}$.
  
  Assume statement \ref{conjupropprojB} holds. Then $p U^{- 1} i'_U = p U^{-
  1} i$. So,
  \begin{eqnarray*}
    p'_U - (p U^{- 1} i)^{- 1} p U^{- 1} & = & (p U^{- 1} i)^{- 1} (p U^{- 1}
    i) p'_U - (p U^{- 1} i)^{- 1} p U^{- 1}\\
    & = & (p U^{- 1} i)^{- 1} p U^{- 1} i'_U p'_U - (p U^{- 1} i)^{- 1} p
    U^{- 1}\\
    & = & (p U^{- 1} i)^{- 1} p U^{- 1} (i'_U p'_U - 1)\\
    & = & (p U^{- 1} i)^{- 1} p U^{- 1} (d_U K'_U + K'_U d_U)\\
    & = & (p U^{- 1} i)^{- 1} p U^{- 1} d_U K'_U\\
    & = & (p U^{- 1} i)^{- 1} p d U^{- 1} K'_U\\
    & = & (p U^{- 1} i)^{- 1} b p U^{- 1} K'_U\\
    & = & 0.
  \end{eqnarray*}
  So, $b'_U = p'_U d_U i = (p U^{- 1} i)^{- 1} p U^{- 1} U d U^{- 1} i = (p
  U^{- 1} i)^{- 1} b (p U^{- 1} i)$. This means statement \ref{conjupropprojA}
  holds.
\end{proof}

\begin{proposition}
  In the same setting for Proposition \ref{conjuptbtnprojcomcase}, the
  following statements are equivalent:\label{conjuptbtninjcomcase}
  \begin{enumeratealphacap}
    \item There is a conjugation on $N$\label{conjupropinjA}
    \[ (N, b) \begin{array}{c}
         W\\
         \rightleftharpoons\\
         W^{- 1}
       \end{array} (N, W b W^{- 1}) \]
    such that
    \[ b'_U = W b W^{- 1} \qquad \text{and} \qquad i'_U = U i W^{- 1} \]
    \item The invertible map $U$ satisfies\label{conjupropinjB}
    \[ K U i = 0, \qquad \text{and} \qquad (p U i)^{- 1} \text{ exists on } N.
    \]
  \end{enumeratealphacap}
  If \ref{conjupropinjA} (hence also \ref{conjupropinjB}) holds, then there
  must be
  \[ W = p U i. \]
\end{proposition}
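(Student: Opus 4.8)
The plan is to prove this as the injective counterpart of Proposition~\ref{conjuptbtnprojcomcase}. Conceptually, the Dual Construction (Lemma~\ref{dualcnstrctnsdrlm25}) interchanges $i$ and $p$ and sends $K\mapsto -K^{\ast}$, so the present statement is the formal dual of the projection case. Rather than dualize and track the Koszul signs hidden in $(AB)^{\ast}=\pm\,B^{\ast}A^{\ast}$ and the appearance of $(U^{\ast})^{-1}$, I would rerun the same two-implication argument directly, systematically replacing each projection-facing identity by its injection-facing mirror. Throughout I would write the perturbation as $\delta\assign d_U-d$, expand $(1-K\delta)^{-1}$ and $(1-\delta K)^{-1}$ as Neumann series, and use the unperturbed relations $pi=1$, $pK=0$, $Ki=0$, $K^2=0$ together with the cochain identity $di=ib$.

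For \ref{conjupropinjA}$\Rightarrow$\ref{conjupropinjB} the first thing I would establish is that the perturbed inclusion $i_U'=(1-K\delta)^{-1}i=\sum_{n\ge0}(K\delta)^n i$ satisfies $Ki_U'=0$ and $pi_U'=1$, in complete analogy with the identities $p_U'K=0$, $p_U'i=1$ used in the projection proof. Indeed, left-multiplying by $K$ kills the $n=0$ term through $Ki=0$ and every $n\ge1$ term through $K^2=0$, while left-multiplying by $p$ leaves $pi=1$ from $n=0$ and kills every $n\ge1$ term through $pK=0$. Substituting the hypothesized $i_U'=UiW^{-1}$ into these two relations gives $KUiW^{-1}=0$ and $pUiW^{-1}=1$; invertibility of $W$ turns these into $KUi=0$ and $pUi=W$, which is exactly \ref{conjupropinjB} and at the same time forces $W=pUi$.

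The substantive direction is \ref{conjupropinjB}$\Rightarrow$\ref{conjupropinjA}, where I would set $W\assign pUi$ and proceed in three steps. First I would show $p_U'Ui=pUi$: with $p_U'=p(1-\delta K)^{-1}$, the difference from $pUi$ collects into $p(1-\delta K)^{-1}\delta\,(KUi)$, which vanishes by hypothesis. Second, I would prove $i_U'=UiW^{-1}$ through the chain
\[ i_U'-Ui(pUi)^{-1}=i_U'\,p_U'\,Ui\,(pUi)^{-1}-Ui(pUi)^{-1}=(i_U'p_U'-1)\,Ui\,(pUi)^{-1}=(d_UK_U'+K_U'd_U)\,Ui\,(pUi)^{-1}, \]
where the first equality uses step one in the form $pUi=p_U'Ui$ and the last uses the perturbed SDR relation $i_U'p_U'-1=d_UK_U'+K_U'd_U$. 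Here $K_U'Ui=(1-K\delta)^{-1}(KUi)=0$, so the term $d_UK_U'Ui$ drops; and the surviving term satisfies $K_U'd_UUi=K_U'U\,di=K_U'Ui\,b=0$ via $d_U=UdU^{-1}$, $di=ib$, and $K_U'Ui=0$ again. Third, I would read off the transferred differential from Lemma~\ref{hplhplhpl} as $b_U'=p\,d_U\,i_U'=pU\,di\,W^{-1}=pU\,ib\,W^{-1}=WbW^{-1}$. Combined with $i_U'=UiW^{-1}$, this is \ref{conjupropinjA}.

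The main obstacle I anticipate is not any single long computation but pinning down the two auxiliary vanishing identities $p_U'Ui=pUi$ and $K_U'Ui=0$ with the factors in the right order, since these are exactly the points where the hypothesis $KUi=0$ enters and where a misplaced factor would break the telescoping in step two; using the form $K_U'=(1-K\delta)^{-1}K$ (rather than $K(1-\delta K)^{-1}$) is what makes $K_U'Ui$ collapse cleanly. I would also verify at the outset that $\delta=d_U-d$ is a small perturbation so that the Neumann series converge, as already assumed in the ``perturbation by conjugation'' setup. If instead one pursued the dualization route through Lemma~\ref{dualcnstrctnsdrlm25}, the corresponding nuisance would be bookkeeping the Koszul signs and the passage $U\mapsto(U^{\ast})^{-1}$, which is why I prefer the direct mirror argument.
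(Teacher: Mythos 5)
Your proof is correct, and it is essentially the paper's intended argument: the paper leaves this proposition as an exercise precisely because it is the mirror image of its written proof of Proposition \ref{conjuptbtnprojcomcase}, and your three steps ($p_U'Ui=pUi$ from $KUi=0$, the telescoping chain via $i_U'p_U'-1=d_UK_U'+K_U'd_U$ with $K_U'Ui=0$ killing both terms, then $b_U'=pd_Ui_U'=WbW^{-1}$) reproduce that proof with the roles of $i$ and $p$ interchanged, exactly as the paper's own projection-case computation does with $p_U'$. The easy direction, extracting $KUi=0$ and $W=pUi$ from $Ki_U'=0$ and $pi_U'=1$, likewise matches the paper's use of $p_U'K=0$, $p_U'i=1$.
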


The proof is left as an exercise.

\subsection{Batalin-Vilkovisky algebras and the quantum master equation}

\begin{definition}
  \label{dfntndgbvalgbrasec2229}A {\tmstrong{differential Batalin-Vilkovisky
  (BV) algebra}} is a triple $(\mathcal{A}, Q, \Delta)$ where
  \begin{itemizedot}
    \item $\mathcal{A}$ is a $\mathbb{Z}$-graded commutative associative
    unital algebra. Assume the base field is $\mathbb{R}$.
    
    \item $Q : \mathcal{A} \mapsto \mathcal{A}$ is a derivation of degree $1$
    such that $Q^2 = 0$.
    
    \item $\Delta : \mathcal{A} \mapsto \mathcal{A}$ is a linear operator of
    degree $1$ such that $\Delta^2 = 0$, and $[Q, \Delta] = Q \Delta + \Delta
    Q = 0$.
    
    \item $\Delta$ is a ``second-order'' operator w.r.t. the product of
    $\mathcal{A}$. Precisely, define the binary operator $\{ -, - \} :
    \mathcal{A} \otimes \mathcal{A} \mapsto \mathcal{A}$ as:
    \[ \{ a, b \} \assign \Delta (a b) - (\Delta a) b - (- 1)^{| a |} a \Delta
       b, \qquad \text{for } \forall a, b \in \mathcal{A}. \]
    Then for $\forall a \in \mathcal{A}$, $\{ a, - \}$ is a derivation of
    degree $(| a | + 1)$: for $\forall b, c \in \mathcal{A}$
    \[ \{ a, b c \} = \{ a, b \} c + (\pm)_{\tmop{Kos}} b \{ a, c \}, \qquad
       \text{with } (\pm)_{\tmop{Kos}} = (- 1)^{| b | | a | + | b |} \text{
       here} . \]
  \end{itemizedot}
\end{definition}

We call $\Delta$ the BV operator. $\{ -, - \}$ is called the BV bracket, which
measures the failure of $\Delta$ being a derivation. The above description
implies the following properties:
\begin{itemizedot}
  \item $\{ a, b \} = (- 1)^{| a | | b |}  \{ b, a \}$.
  
  \item $\Delta \{ a, b \} = - \{ \Delta a, b \} - (- 1)^{| a |} \{ a, \Delta
  b \}$.
  
  \item $Q \{ a, b \} = - \{ Q a, b \} - (- 1)^{| a |} \{ a, Q b \}$.
  
  \item $\{ a, \{ b, c \} \} = (- 1)^{| a | + 1} \{ \{ a, b \}, c \} + (-
  1)^{(| a | + 1) (| b | + 1)} \{ b, \{ a, c \} \}$.
\end{itemizedot}
Let $\hbar$ be a formal variable of degree $0$ (representing the quantum
parameter), we can extend the above $Q, \Delta$ to $\mathbb{R}
[[\hbar]]$-linear operators on $\mathcal{A} [[\hbar]]$. Then, $(\mathcal{A},
Q, \Delta)$ being a differential BV algebra implies $Q + \hbar \Delta$ is a
differential on $\mathcal{A} [[\hbar]]$. There is a systematic way to twist
(i.e., perturb) this differential, sketched in the following.

\begin{definition}
  Let $(\mathcal{A}, Q, \Delta)$ be a differential BV algebra. A degree $0$
  element $I \in \mathcal{A} [[\hbar]]$ is said to satisfy quantum master
  equation (QME) if
  \begin{equation}
    Q I + \hbar \Delta I + \frac{1}{2} \{ I, I \} = 0. \label{qmesmplestcase}
  \end{equation}
\end{definition}

It is direct to check that (\ref{qmesmplestcase}) implies $(Q + \hbar \Delta +
\{ I, - \})^2 = 0$ on $\mathcal{A} [[\hbar]]$.

The ``second-order'' property of $\Delta$ allows us to write down a formally
equivalent equation of QME:
\[ (Q + \hbar \Delta) e^{I / \hbar} = 0, \]
and it implies this formal conjugation relation of operators on $\mathcal{A}
[[\hbar]]$:
\[ Q + \hbar \Delta + \{ I, - \} = e^{- I / \hbar} (Q + \hbar \Delta) e^{I /
   \hbar} . \]
Although $e^{I / \hbar}$ is not defined on $\mathcal{A} [[\hbar]]$, we can
still make sense of the above two formulae without worrying about the powers
of $\hbar^{- 1}$.

For later convenience, we give an example of differential BV algebra here. In
Lemma \ref{lemsymsdrcnstrctn} we have seen how to construct a differential
graded commutative algebra from a cochain complex
\begin{equation}
  (M, d) \rightsquigarrow (\widehat{\tmop{Sym}} (M), d) .
  \label{cochaintodgcaconstructnrep}
\end{equation}
\begin{definition}
  We call a linear operator $G$ on $\widehat{\tmop{Sym}} (M)$ a
  ``{\tmstrong{$2$-to-$0$ operator}}'', if
  \begin{itemizedot}
    \item $G (\tmop{Sym}^{\leqslant 1} (M)) = 0$, and $G (\tmop{Sym}^2 (M))
    \subset \tmop{Sym}^0 (M)$.
    
    \item For $n \geqslant 2$, $\forall m_1 m_2 \cdots m_n \in \tmop{Sym}^n
    (M)$,
    \[ G (m_1 m_2 \cdots m_n) = \sum_{i < j} (\pm)_{\tmop{Kos}} G (m_i m_j)
       m_1 \ldots \widehat{m_i} \ldots \widehat{m_j} \ldots m_n . \]
  \end{itemizedot}
\end{definition}

\begin{remark*}
  In short, a $2$-to-$0$ operator can be regarded as a second-order
  differential operator with constant coefficients.
\end{remark*}

It is easy to verify the following propositions:

\begin{proposition}
  \label{2t0oprtallcmt}Any two $2$-to-$0$ operators $G_1, G_2$ commute: $[G_1,
  G_2] = 0$. This means that any $2$-to-$0$ operator of degree $1$ is a
  differential.
\end{proposition}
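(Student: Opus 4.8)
The plan is to exploit the fact that a $2$-to-$0$ operator $G$ lowers the symmetric degree by exactly $2$ and is completely determined by its restriction $g \assign G|_{\tmop{Sym}^2 (M)} : \tmop{Sym}^2 (M) \to \tmop{Sym}^0 (M) = \mathbb{R}$, since the displayed formula in the definition reconstructs $G$ on every $\tmop{Sym}^n (M)$ out of $g$. Consequently both $G_1 G_2$ and $G_2 G_1$ lower the symmetric degree by $4$, so the graded commutator $[G_1, G_2] = G_1 G_2 - (- 1)^{| G_1 | | G_2 |} G_2 G_1$ vanishes automatically on $\tmop{Sym}^{\leqslant 3} (M)$, and it suffices to check it on a general monomial $m_1 \cdots m_n \in \tmop{Sym}^n (M)$ with $n \geqslant 4$.

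The conceptually cleanest route I would take is to factor each $2$-to-$0$ operator into pairwise commuting first-order pieces. For $\phi \in M^{\ast}$, let $\iota_{\phi}$ be the contraction derivation of degree $| \phi |$ determined on generators by $\iota_{\phi} (m) = \phi (m)$; being a graded derivation, it acts on monomials by $\iota_{\phi} (m_1 \cdots m_n) = \sum_i (\pm)_{\tmop{Kos}} \phi (m_i) \, m_1 \cdots \widehat{m_i} \cdots m_n$. Any two such contractions graded-commute: $[\iota_{\phi}, \iota_{\psi}]$ is again a derivation, and it annihilates the generators $M$ (since $\iota_{\psi} (m) \in \mathbb{R}$ is killed by $\iota_{\phi}$, and symmetrically), hence vanishes on all of $\widehat{\tmop{Sym}} (M)$. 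Using nuclearity to identify $g \in (\tmop{Sym}^2 M)^{\ast}$ with an element of $\widehat{\tmop{Sym}}^2 (M^{\ast})$, I would write it as a convergent symmetric sum $\sum_k \phi_k \psi_k$ of rank-one pieces, whereupon a short check shows that $G$ coincides, up to the evident combinatorial normalization, with $\sum_k \iota_{\phi_k} \iota_{\psi_k}$, reproducing the second-order formula. The point is that each $G_a$ then lies in the graded-commutative subalgebra of operators generated by the $\iota_{\phi}$'s, and a graded commutator of products of pairwise graded-commuting operators vanishes (by graded Leibniz); so $[G_1, G_2] = 0$ follows formally, the degree sign $(- 1)^{| G_1 | | G_2 |}$ being exactly the one demanded by the graded commutator.

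The main obstacle is bookkeeping rather than ideas, and it shows up in two guises. In the factorization route, one must justify the decomposition $g = \sum_k \phi_k \psi_k$ and its convergence in the nuclear topology; I would invoke nuclearity of the relevant section spaces to secure it, or else bypass it. The direct alternative, which avoids any decomposition, is to expand $G_1 G_2 (m_1 \cdots m_n)$ and $G_2 G_1 (m_1 \cdots m_n)$ by hand: each is a sum over ordered choices of two disjoint index pairs, one contracted by $g_1$ and one by $g_2$, and after reindexing both collapse to a sum over the \emph{same} unordered configurations of two disjoint contracted pairs. The genuinely fiddly step is to verify that the accumulated Koszul sign for contracting a fixed pair of disjoint pairs is independent of the order in which the two contractions are carried out, apart from the overall factor $(- 1)^{| G_1 | | G_2 |}$ coming from interchanging the degrees of $g_1$ and $g_2$ — which is precisely cancelled by the commutator sign. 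I expect this to be best handled by first moving the two pairs to the front of the monomial via graded commutativity and only then applying $g_1$ and $g_2$, so that the signs become manifestly order-symmetric.

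Finally, the corollary is immediate: specializing to $G_1 = G_2 = G$ with $| G | = 1$ gives $[G, G] = G^2 - (- 1)^{1 \cdot 1} G^2 = 2 G^2 = 0$, and since the base field is $\mathbb{R}$ we conclude $G^2 = 0$, i.e. a degree $1$ such $G$ is a differential.
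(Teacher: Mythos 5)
The paper never actually writes out a proof of this proposition: it is one of the three statements prefaced by ``It is easy to verify the following propositions,'' so there is nothing to compare against line by line, and your argument must be judged on its own merits. It passes. Your direct-expansion route is precisely the verification the authors presumably had in mind: on a monomial $m_1 \cdots m_n$, both $G_1 G_2$ and $G_2 G_1$ are sums over the same configurations of two disjoint contracted pairs; the contracted values are real scalars and hence commute; and the only discrepancy in Koszul signs is the block-swap factor $(-1)^{(|m_i| + |m_j|)(|m_k| + |m_l|)}$ obtained by moving the pair contracted by $G_2$ past the pair contracted by $G_1$. Your key observation is the right one: since $G_2 (m_i m_j)$ lands in $\tmop{Sym}^0 (M)$, a nonvanishing term forces $|m_i| + |m_j| \equiv |G_2| \pmod 2$ (and likewise $|m_k| + |m_l| \equiv |G_1|$), so this factor equals $(-1)^{|G_1| |G_2|}$ and is exactly absorbed by the sign in the graded commutator; the degree-$1$ corollary $[G, G] = 2 G^2 = 0$ then follows as you say. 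One caveat on your preferred factorization route: the paper's definition of a $2$-to-$0$ operator does not assume continuity of $G$, so identifying $G |_{\tmop{Sym}^2 (M)}$ with an element of the completed tensor square of $M^{\ast}$ --- and hence the rank-one decomposition $\sum_k \phi_k \psi_k$ with its convergence --- is not available in general, and even for continuous $G$ it leans on Grothendieck-type representation theorems in the nuclear setting. Since you flag this yourself and supply the decomposition-free computation as a complete alternative, the proposal as a whole constitutes a correct proof; the factorization picture is best regarded as a conceptual gloss (it is essentially the remark in the paper that a $2$-to-$0$ operator is a second-order constant-coefficient differential operator) rather than the load-bearing argument.
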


\begin{proposition}
  \label{2t0oprtcmtderto2t0}For $(\widehat{\tmop{Sym}} (M), d)$ in
  (\ref{cochaintodgcaconstructnrep}) and a $2$-to-$0$ operator $G$, $[G, d]$
  is also a $2$-to-$0$ operator, decided by
  \[ [G, d] (m_1 m_2) = G d (m_1 m_2) \qquad \text{for } \forall m_1, m_2 \in
     M. \]
\end{proposition}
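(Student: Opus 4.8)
The plan is to verify directly that $[G, d] = Gd - (-1)^{|G||d|}dG$ satisfies the two defining properties of a $2$-to-$0$ operator, handling the support conditions first and the second-order formula second. Here $d$ means the derivation $d^{\tmop{der}}$ on $\widehat{\tmop{Sym}}(M)$ extended from the differential on $M$. Since a $2$-to-$0$ operator is completely determined by its restriction to $\tmop{Sym}^2(M)$, the genuine content is that $[G,d]$ obeys the pair-sum formula on $\tmop{Sym}^n(M)$ for $n \geq 2$; the stated value $[G,d](m_1 m_2) = Gd(m_1 m_2)$ will then pin it down.

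First I would dispose of the low-arity and support conditions. On $\tmop{Sym}^0(M) = k$ and on $\tmop{Sym}^1(M) = M$ both composites vanish: $G$ kills $\tmop{Sym}^{\leqslant 1}(M)$ by definition, while $d$ preserves arity and annihilates scalars, so $Gd$ and $dG$ both vanish there. On $\tmop{Sym}^2(M)$ we have $G(\tmop{Sym}^2(M)) \subset \tmop{Sym}^0(M)$, which $d$ then annihilates, so $dG$ vanishes on $\tmop{Sym}^2(M)$ and hence $[G,d](m_1 m_2) = Gd(m_1 m_2)$; since $d$ preserves arity and $G$ lands in $\tmop{Sym}^0(M)$, this also gives $[G,d](\tmop{Sym}^2(M)) \subset \tmop{Sym}^0(M)$. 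This establishes both the stated value on $\tmop{Sym}^2(M)$ and the $n = 2$ case of the Leibniz formula.

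The heart of the argument is the pair-sum formula for $n \geqslant 2$. Writing $\omega = m_1 \cdots m_n$, I would expand $Gd\,\omega$ by first applying the derivation $d$ (a Koszul-signed sum of $n$ terms, each replacing one factor $m_k$ by $dm_k$) and then the second-order formula for $G$ (a sum over pairs of the $n$ factors). This yields two families: \emph{separated} terms, where $G$ contracts a pair $\{m_i, m_j\}$ disjoint from the differentiated slot $k$, and \emph{joined} terms, where $dm_k$ is itself one member of the contracted pair, contributing $G(dm_k\, m_l)$ or $G(m_l\, dm_k)$. In parallel I would expand $dG\,\omega$: applying $G$ first pulls a scalar $G(m_i m_j) \in \tmop{Sym}^0(M)$ to the front (of degree $0$ where nonzero, hence commuting with $d$ with no extra sign), after which $d$ differentiates one of the remaining $n - 2$ factors. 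The claim is that the separated terms of $Gd\,\omega$ cancel $(-1)^{|G||d|}dG\,\omega$ termwise, since there $d$ and $G$ act on disjoint tensor factors and commute up to Koszul sign, while the joined terms reassemble, via the identity $Gd(m_i m_j) = [G,d](m_i m_j)$ from the previous step, into exactly $\sum_{i < j} (\pm)_{\tmop{Kos}} [G,d](m_i m_j)\, m_1 \cdots \widehat{m_i} \cdots \widehat{m_j} \cdots m_n$. That is the required formula, so $[G,d]$ is a $2$-to-$0$ operator.

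The main obstacle is the sign bookkeeping in matching the separated terms of $Gd\,\omega$ against $dG\,\omega$: in $Gd$ the factor $G(m_i m_j)$ is commuted to the front past a product that already contains the differentiated factor $dm_k$ of shifted degree $|m_k| + 1$, whereas in $dG$ the same scalar is extracted using the original degrees $|m_i|, |m_j|$ before $d$ is applied. Carefully tracking these Koszul signs, and confirming the residual sign is precisely the commutator sign $(-1)^{|G||d|}$, is the one delicate computation; everything else is formal. Conceptually this merely says that a constant-coefficient second-order operator is encoded by a bilinear form on $M$ and that $[G,d]$ is encoded by postcomposing that form with $d$, but I would carry out the explicit sign check to be safe.
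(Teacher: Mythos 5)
Your proposal is correct, and it coincides with the paper's (implicit) argument: the paper gives no proof of this proposition at all — it is one of those prefaced by ``It is easy to verify the following propositions'' — and your direct verification (low-arity and support checks, the observation that $dG$ vanishes on $\tmop{Sym}^2 (M)$ so that $[G, d] (m_1 m_2) = G d (m_1 m_2)$, then splitting $G d$ on $\tmop{Sym}^n (M)$ into separated terms, which cancel $(- 1)^{| G | | d |} d G$, and joined terms, which reassemble into the pair-sum formula) is exactly the intended computation. The sign check you defer does close as you predict: the separated-term cancellation is nothing but the Koszul rule that operators acting on disjoint tensor factors graded-commute, applied to the contraction $G (m_i m_j)$ and the slot-wise differential $d_k$ with $k \nin \{ i, j \}$.
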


\begin{proposition}
  \label{freeqtmdgbveg}For $(\widehat{\tmop{Sym}} (M), d)$ in
  (\ref{cochaintodgcaconstructnrep}), let $\Delta$ be a $2$-to-$0$ operator of
  degree $1$ such that
  \[ \Delta d (m_1 m_2) = \Delta ((d m_1) m_2 + (\pm)_{\tmop{Kos}} m_1 (d
     m_2)) = 0 \qquad \text{for } \forall m_1, m_2 \in M, \]
  then $(\widehat{\tmop{Sym}} (M), d, \Delta)$ is a differential BV algebra.
\end{proposition}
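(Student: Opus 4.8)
The plan is to verify, one by one, the four defining conditions of a differential BV algebra (Definition \ref{dfntndgbvalgbrasec2229}) for the triple $(\widehat{\tmop{Sym}} (M), d, \Delta)$. The algebra axiom and the fact that $Q \assign d$ is a degree $1$ derivation with $d^2 = 0$ are already supplied by the construction in Lemma \ref{lemsymsdrcnstrctn}, so the real work concerns the three conditions on $\Delta$. Two of these are essentially immediate from the preceding propositions. First, $\Delta^2 = 0$ is exactly the content of Proposition \ref{2t0oprtallcmt} (a degree $1$ $2$-to-$0$ operator is a differential). Second, for $[Q, \Delta] = 0$ I would invoke Proposition \ref{2t0oprtcmtderto2t0}: it guarantees that $[\Delta, d]$ is again a $2$-to-$0$ operator, completely determined by its value $[\Delta, d] (m_1 m_2) = \Delta d (m_1 m_2)$ on $\tmop{Sym}^2 (M)$. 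By hypothesis this value vanishes for all $m_1, m_2 \in M$, and since a $2$-to-$0$ operator annihilates $\tmop{Sym}^{\leqslant 1} (M)$ and is determined by its restriction to $\tmop{Sym}^2 (M)$, it follows that $[Q, \Delta] = [\Delta, d] = 0$.

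The substance of the proof is therefore the second-order property, namely that $\{ a, - \}$ is a derivation of degree $| a | + 1$ for every $a$. Because $\{ -, - \}$ is bilinear and $\widehat{\tmop{Sym}} (M)$ is spanned by monomials, I would reduce to monomials $a = m_1 \cdots m_p$, $b = n_1 \cdots n_q$, $c = l_1 \cdots l_r$ with all factors in $M$. The key preliminary step is a closed ``cross-term'' formula for the bracket:
\[ \{ m_1 \cdots m_p, \ n_1 \cdots n_q \} = \sum_{i, j} (\pm)_{\tmop{Kos}} \, \Delta (m_i n_j) \, m_1 \cdots \widehat{m_i} \cdots m_p \, n_1 \cdots \widehat{n_j} \cdots n_q . \]
This follows by expanding $\Delta (ab)$ with the defining formula of a $2$-to-$0$ operator and sorting the resulting pairs into three groups: both indices among the $m$'s, both among the $n$'s, and one from each. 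The first group reassembles $(\Delta a) b$, the second reassembles $(- 1)^{| a |} a (\Delta b)$, and the definition $\{ a, b \} = \Delta (ab) - (\Delta a) b - (- 1)^{| a |} a \Delta b$ cancels precisely these, leaving only the mixed pairs. This is exactly the precise meaning of the Remark that a $2$-to-$0$ operator is a constant-coefficient second-order differential operator.

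Granting the cross-term formula, the Leibniz rule is almost immediate: in $\{ a, bc \}$ the mixed pairs consist of one factor drawn from $a$ and one from the concatenated monomial $bc = n_1 \cdots n_q l_1 \cdots l_r$, and this set of pairs splits according to whether the second factor lies among the $n$'s or among the $l$'s. The first subset assembles into $\{ a, b \} c$ and the second into $(\pm)_{\tmop{Kos}} \, b \, \{ a, c \}$, reproducing the derivation property of Definition \ref{dfntndgbvalgbrasec2229}. I expect the only genuine obstacle to be the bookkeeping of Koszul signs: one must check that the sign produced when $\Delta$ extracts a cross pair $m_i n_j$ agrees, after transposing $m_i$ and $n_j$ into position, with the signs occurring in $(\Delta a) b$ and $a (\Delta b)$, and that the final splitting carries exactly the factor $(- 1)^{| b | | a | + | b |}$ predicted by the axiom. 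Each of these is a routine application of the Koszul sign rule once the indices are fixed, so the whole statement reduces to this sign verification together with the two propositions already cited.
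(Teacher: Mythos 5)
Your proof is correct, and it is exactly the verification the paper itself has in mind: the paper states this proposition (together with Propositions \ref{2t0oprtallcmt} and \ref{2t0oprtcmtderto2t0}) with no written proof, merely as ``easy to verify'', and your argument --- quoting those two propositions for $\Delta^2 = 0$ and $[d, \Delta] = 0$, then establishing the second-order property via the cross-term expansion of $\{ -, - \}$ on monomials --- is precisely that verification carried out. The cross-term formula and the final sign $(-1)^{|a||b| + |b|} = (-1)^{(|a|+1)|b|}$ are right, so nothing is missing.
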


We refer to the review {\cite{Libvbv2017exk}} for geometric descriptions of BV
formalism and its relations to quantum field theory.

\subsection{Application: effective theory of a free
QFT}\label{applctneftoffreeqftsecsec}

In Costello's framework, for each QFT we construct a family of differential BV
algebras which are all equivalent in the sense of homotopic renormalization.
Since renormalization specified for our main case will be discussed in later
sections, we ignore this complexity here (i.e., we only refer to structures at
a fixed scale $t$ in the renormalized theory).

So, attached to each free QFT, there is a differential BV algebra
$(\widehat{\tmop{Sym}} (M), d, \Delta)$ as described in Proposition
\ref{freeqtmdgbveg}. The meaning of $M$ is ``the set of classical linear
observables'', and usually $(M, d)$ is the dual of the field space for the
theory. Then, there is a cochain complex of quantum observables
\begin{equation}
  (\widehat{\tmop{Sym}} (M) [[\hbar]], d + \hbar \Delta) .
  \label{freeqtmobsvbdata001}
\end{equation}
Roughly speaking, an effective theory is a ``smaller'' and possibly ``more
invariant'' cochain complex quasi-isomorphic to (\ref{freeqtmobsvbdata001}).
In our scope, it is constructed using homological perturbation theory,
explained in the following.

We start from an SDR written as (\ref{sdrinidata})
\[ (N, b) \begin{array}{c}
     i\\
     \rightleftharpoons\\
     p
   \end{array} (M, d), K \]
with $N$ regarded as ``the effective classical linear observables''. Then, by
symmetric tensor power construction (Lemma \ref{lemsymsdrcnstrctn}), there is
an SDR
\begin{equation}
  (\widehat{\tmop{Sym}} (N) [[\hbar]], b) \begin{array}{c}
    i\\
    \rightleftharpoons\\
    p
  \end{array} (\widehat{\tmop{Sym}} (M) [[\hbar]], d), K^{\tmop{sym}}
  \label{clsclobssdrini001}
\end{equation}
where we have extended the maps linearly over $\mathbb{R} [[\hbar]]$. It is
direct to see that $\sum_{n = 0}^{+ \infty} (\hbar \Delta K^{\tmop{sym}})^n$
is well defined on $\widehat{\tmop{Sym}} (M) [[\hbar]]$, so $\hbar \Delta$ is
a small perturbation to this SDR.

\begin{proposition}
  \label{freethyeffcorigsdrrslt28}In above settings, we can write down the
  perturbation result of (\ref{clsclobssdrini001}) by $\hbar \Delta$ as
  follows:
  \begin{equation}
    (\widehat{\tmop{Sym}} (N) [[\hbar]], b_{\hbar}) \begin{array}{c}
      i_{\hbar}\\
      \rightleftharpoons\\
      p_{\hbar}
    \end{array} (\widehat{\tmop{Sym}} (M) [[\hbar]], d + \hbar \Delta),
    K_{\hbar} \label{freeqtmeffthyreslt}
  \end{equation}
  where
  \begin{eqnarray*}
    i_{\hbar} & = & i\\
    b_{\hbar} & = & b + \hbar p \Delta i\\
    p_{\hbar} & = & p e^{\hbar (\Delta K^{\tmop{sym}})_2}
  \end{eqnarray*}
  with $(\Delta K^{\tmop{sym}})_2$ being the $2$-to-$0$ operator decided by
  \[ (\Delta K^{\tmop{sym}})_2 (m_1 m_2) = \Delta K^{\tmop{sym}} (m_1 m_2) =
     \frac{1}{2} \Delta (K \otimes (1 + i p) + (1 + i p) \otimes K) (m_1 m_2)
     \text{\quad for } \forall m_1, m_2 \in M. \]
\end{proposition}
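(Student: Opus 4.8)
The plan is to apply the Homological Perturbation Lemma \ref{hplhplhpl} to the SDR (\ref{clsclobssdrini001}) with small perturbation $\delta \assign \hbar \Delta$, and then massage each of the four resulting maps into the stated closed forms. Writing $A \assign (1 - \hbar \Delta K^{\tmop{sym}})^{-1} \hbar \Delta$, the lemma gives $i_{\hbar} = i + K^{\tmop{sym}} A i$, $b_{\hbar} = b + p A i$ and $p_{\hbar} = p (1 - \hbar \Delta K^{\tmop{sym}})^{-1}$. Throughout I use $i, p, K$ for $i^{\tmop{sym}}, p^{\tmop{sym}}, K^{\tmop{sym}}$ as in the stated convention, together with the SDR identities $p i = 1$, $K i = 0$, $p K = 0$ and $\pi \assign i p$, for which $\pi^2 = \pi$ and $p \pi = p$.

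First I would dispose of $i_{\hbar}$ and $b_{\hbar}$. By Proposition \ref{corcorhplpresvinj} (equivalence of \ref{corhplpresvinj001A} and \ref{corhplpresvinj001B}) it suffices to check $K^{\tmop{sym}} \Delta i^{\tmop{sym}} = 0$. The point is that $\Delta$ is a $2$-to-$0$ operator, so on a decomposable element $i^{\tmop{sym}} (n_1 \cdots n_k) = i (n_1) \cdots i (n_k)$ it contracts one pair into a scalar and leaves the remaining factors in the image of $i$; since $i^{\tmop{sym}}$ is an algebra map fixing $\tmop{Sym}^0$, the whole expression $\Delta i^{\tmop{sym}} (n_1 \cdots n_k)$ again lies in the image of $i^{\tmop{sym}}$. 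The SDR axiom $K^{\tmop{sym}} i^{\tmop{sym}} = 0$ (Lemma \ref{lemsymsdrcnstrctn}) then annihilates it, so $K \delta i = 0$ and hence $i_{\hbar} = i$. With $i_{\hbar} = i$ in hand, the identity $b_1 = b + p \delta i_1$ from Lemma \ref{hplhplhpl} immediately yields $b_{\hbar} = b + \hbar p \Delta i$, as claimed.

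The substance is the formula for $p_{\hbar}$. Expanding the Neumann series gives $p_{\hbar} = \sum_{n \geqslant 0} \hbar^n\, p\, (\Delta K^{\tmop{sym}})^n$, whereas $p\, e^{\hbar (\Delta K^{\tmop{sym}})_2} = \sum_{n \geqslant 0} \frac{1}{n!}\, \hbar^n\, p\, ((\Delta K^{\tmop{sym}})_2)^n$; both sums are finite on each $\tmop{Sym}^N (M)$ since every application of either operator lowers symmetric degree by $2$, so matching the coefficient of $\hbar^n$ is legitimate and reduces the claim to the single operator identity $p\, (\Delta K^{\tmop{sym}})^n = \frac{1}{n!}\, p\, ((\Delta K^{\tmop{sym}})_2)^n$ for every $n$. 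I would prove this by reading both sides as Wick contractions of a monomial $m_1 \cdots m_N$. On the right, using that $(\Delta K^{\tmop{sym}})_2$ contracts exactly one pair and that all $2$-to-$0$ operators commute (Proposition \ref{2t0oprtallcmt}), the expression expands as a sum over unordered size-$n$ matchings: each matched pair $\{ m_a, m_b \}$ contributes the scalar $\Delta K^{\tmop{sym}} (m_a m_b)$, each unmatched $m_c$ contributes $p m_c$ after $p^{\tmop{sym}}$, and the $\frac{1}{n!}$ converts the $n!$ orderings into unordered matchings. On the left the key inputs are $p^{\tmop{sym}} K^{\tmop{sym}} = 0$ and $p \pi = p$: any spectator factor still carrying a $K$ at the end is killed by $p$, while the intermediate insertions of the idempotent $\pi = i p$ are invisible to the final $p$, so each ordered sequence of $n$ contractions reproduces one matching with the same propagator weights.

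The step I expect to be the main obstacle is precisely this last bookkeeping for $p_{\hbar}$: showing that the ordered iterated operator $(\Delta K^{\tmop{sym}})^n$, whose intermediate stages carry both the symmetrization weights and the $1$-versus-$i p$ ambiguity built into $K^{\tmop{sym}}$ (recall $K^{\tmop{sym}}|_{\tmop{Sym}^2} = \frac{1}{2} (K \otimes (1 + i p) + (1 + i p) \otimes K)$), collapses after composition with $p^{\tmop{sym}}$ to give each size-$n$ matching with coefficient exactly $1$, so that the $\frac{1}{n!}$ is supplied solely by the reordering on the exponential side, and with all Koszul signs matching. I would handle this by induction on $N$ (equivalently on the number of contractions), peeling off the outermost $\Delta K^{\tmop{sym}}$, using the explicit pair-sum form of a $2$-to-$0$ operator to isolate the contracted pair from the spectators, and verifying that the $p^{\tmop{sym}}$-annihilation of the $K$-carrying spectators removes exactly the terms that would otherwise spoil the matching count.
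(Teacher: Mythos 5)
Your handling of $i_{\hbar}$ and $b_{\hbar}$ is correct and in fact more direct than the paper's: noting that $\Delta i^{\tmop{sym}}$ lands in the image of $i^{\tmop{sym}}$ (so $K^{\tmop{sym}} \Delta i^{\tmop{sym}} = 0$ follows from $K^{\tmop{sym}} i^{\tmop{sym}} = 0$), invoking Proposition \ref{corcorhplpresvinj}, and then using $b_1 = b + p \delta i_1$ from Lemma \ref{hplhplhpl} settles those two formulae. Your reduction of the $p_{\hbar}$ claim to the operator identity $p (\Delta K^{\tmop{sym}})^n = \frac{1}{n!} p ((\Delta K^{\tmop{sym}})_2)^n$ is also legitimate, and that identity is true. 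The gap is in the proposed proof of it. The mechanism you describe --- final-$p$ annihilation of $K$-carrying spectators plus transparency of the intermediate $\pi$'s --- does not account for all terms. When $K^{\tmop{sym}}$ places $K$ on a factor that is \emph{not} contracted by its own $\Delta$, that factor need not survive to the end where $p$ could kill it: a \emph{later} $\Delta K^{\tmop{sym}}$ can contract it against another $K$-decorated factor, producing scalar cross-terms of the shape $\Delta (K m_a \cdot K m_b)$ multiplied by bare contractions $\Delta (m_x m_y)$. Being scalars, these pass through $p$ unharmed; they are absent from the exponential side, and they vanish only because they cancel \emph{in pairs} by Koszul signs (summing over which of the two factors received the $K$), which is exactly the content of the identity $[(\Delta K^{\tmop{sym}})_2, K^{\tmop{der}}] = 0$ that the paper isolates in its proof. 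Your sketch never confronts these terms. In addition, the coefficient $1$ per unordered matching is not produced sequence-by-sequence: $K^{\tmop{sym}}$ on $\tmop{Sym}^N (M)$ carries the $N$-dependent weights $\frac{| \epsilon | ! (N - 1 - | \epsilon |) !}{N!}$ of Lemma \ref{lemsymsdrcnstrctn}, and $(1 + \pi)$ acts as multiplication by $2$ on $\pi$-decorated entries, so the count only closes after summing over all decoration patterns and all placements of $K$; already for $n = 1$ this is the nontrivial identity $\sum_{k} \binom{N - 2}{k} \frac{k! (N - 1 - k) !}{N!} = \frac{1}{2}$, not a formal consequence of $p K = 0$ and $p \pi = p$.

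This combinatorics is precisely what the paper's proof is engineered to avoid. There one perturbs in two steps (Lemma \ref{1stcmpstnhpllemma}): first by $\hbar \Delta^{\infty}$, where $\Delta^{\infty}$ is the $2$-to-$0$ operator with $\Delta^{\infty} (m_1 m_2) = \Delta i p (m_1 m_2)$ --- this step uses the SDR homotopy relation together with the hypothesis $\Delta d (m_1 m_2) = 0$ of Proposition \ref{freeqtmdgbveg}, and it fixes both $i$ and $p$; the remaining difference $\hbar \Delta - \hbar \Delta^{\infty}$ is then realized as conjugation by $U = e^{- \hbar (\Delta K^{\tmop{sym}})_2}$ via $[(\Delta K^{\tmop{sym}})_2, d] = \Delta^{\infty} - \Delta$, and Propositions \ref{conjuptbtnprojcomcase} and \ref{conjuptbtninjcomcase} deliver $p_{\hbar} = p U^{- 1}$ with no Wick bookkeeping at all. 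Note also that your route nowhere uses the standing hypothesis that $(\widehat{\tmop{Sym}} (M), d, \Delta)$ is a differential BV algebra, while the paper's proof does; if you insist on the direct combinatorial route you must either locate where that hypothesis enters or actually establish the sign-cancellation and weight identities from the SDR side conditions alone. Until that is done, the formula for $p_{\hbar}$ --- which you yourself identify as the substance of the proposition --- remains unproven.
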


\begin{proof}
  By associativity of perturbation theory (Lemma \ref{1stcmpstnhpllemma}), we
  can calculate the resulting SDR by ``two-step perturbation''. First, define
  another $2$-to-$0$ operator $\Delta^{\infty}$:
  \begin{eqnarray*}
    \Delta^{\infty} (m_1 m_2) & : = & \Delta (m_1 m_2) + [\Delta
    K^{\tmop{sym}}, d] (m_1 m_2)\\
    & = & \Delta (m_1 m_2) + \Delta K^{\tmop{sym}} d (m_1 m_2)\\
    & = & \Delta (1 + K^{\tmop{sym}} d) (m_1 m_2)\\
    & = & \Delta (i p - d K^{\tmop{sym}}) (m_1 m_2)\\
    & = & \Delta i p (m_1 m_2) .
  \end{eqnarray*}
  It is direct to verify that $(\widehat{\tmop{Sym}} (M), d, \Delta^{\infty})$
  is also a differential BV algebra described by Proposition
  \ref{freeqtmdgbveg}. By construction we have
  \[ \Delta^{\infty} i = i (p \Delta i), \qquad p \Delta^{\infty} = (p \Delta
     i) p \]
  So if we use $\hbar \Delta^{\infty}$ to perturb (\ref{clsclobssdrini001}),
  the statement \ref{corhplpresvproj001A} in Proposition
  \ref{corcorhplpresvproj} and statement \ref{corhplpresvinj001A} in
  Proposition \ref{corcorhplpresvinj} are both valid, hence the perturbed
  result is:
  \begin{equation}
    (\widehat{\tmop{Sym}} (N) [[\hbar]], b + \hbar p \Delta i)
    \begin{array}{c}
      i\\
      \rightleftharpoons\\
      p
    \end{array} (\widehat{\tmop{Sym}} (M) [[\hbar]], d + \hbar
    \Delta^{\infty}), K^{\infty} \label{intermdatcalfreeqtmptbtn}
  \end{equation}
  where we do not touch the concrete formula for $K^{\infty}$, only keep in
  mind that $K^{\infty}$ can be written as $K^{\tmop{sym}} T$ or $T'
  K^{\tmop{sym}}$ with some invertible operator $T, T'$.
  
  Now, consider a conjugation on the RHS of (\ref{intermdatcalfreeqtmptbtn})
  defined by $U = e^{- \hbar (\Delta K^{\tmop{sym}})_2}$. By Proposition
  \ref{2t0oprtallcmt} and Proposition \ref{2t0oprtcmtderto2t0}, it is direct
  to see
  \begin{eqnarray*}
    &  & U (d + \hbar \Delta^{\infty}) U^{- 1}\\
    & = & (d + \hbar \Delta^{\infty}) - \hbar [(\Delta K^{\tmop{sym}})_2, (d
    + \hbar \Delta^{\infty})] + \frac{1}{2!} \hbar^2 [(\Delta
    K^{\tmop{sym}})_2, [(\Delta K^{\tmop{sym}})_2, (d + \hbar
    \Delta^{\infty})]] - \ldots\\
    & = & d + \hbar \Delta^{\infty} - \hbar [(\Delta K^{\tmop{sym}})_2, d]\\
    & = & d + \hbar \Delta .
  \end{eqnarray*}
  This conjugation indeed defines a small perturbation to
  (\ref{intermdatcalfreeqtmptbtn}), and after this second perturbation the
  differential on $\widehat{\tmop{Sym}} (M) [[\hbar]]$ becomes exactly the one
  on the RHS of (\ref{freeqtmeffthyreslt}).
  
  Since $K i = 0$, we have
  \[ U i = U^{- 1} i = i. \]
  By similar reason for Proposition \ref{2t0oprtcmtderto2t0} we can find that
  $[(\Delta K^{\tmop{sym}})_2, K^{\tmop{der}}] = 0$, so $p U^{- 1}
  K^{\tmop{der}} = 0$. Hence
  \[ p U^{- 1} K^{\infty} = 0, \quad K^{\infty} U i = 0, \quad p U i = p U^{-
     1} i = 1, \]
  which means that the statement \ref{conjupropprojB} in Proposition
  \ref{conjuptbtnprojcomcase} and statement \ref{conjupropinjB} in Proposition
  \ref{conjuptbtninjcomcase} are both valid. So after this perturbation by
  conjugation, (\ref{intermdatcalfreeqtmptbtn}) will exactly give rise to
  (\ref{freeqtmeffthyreslt}).
\end{proof}

In this way, we obtain an effective theory which also corresponds to a
differential BV algebra $(\widehat{\tmop{Sym}} (N), b, p \Delta i)$. The
injection and projection maps between the effective quantum observable complex
and the renormalized quantum observable complex are figured out. We should pay
attention to the operator $(\Delta K^{\tmop{sym}})_2$, it corresponds to
``contracting with propagator'' in physicists' language.

\subsection{Application: effective theory of an interactive QFT on closed
manifold}\label{eftintqftclsdmftsec24}

In Costello's framework, an interactive QFT which is ``in the neighborhood''
of a free QFT $(\widehat{\tmop{Sym}} (M), d, \Delta)$ will give rise to a
quantum observable complex
\begin{equation}
  (\widehat{\tmop{Sym}} (M) [[\hbar]], d + \hbar \Delta +
  \delta^{\tmop{int}}), \label{abstctgnrlintobscmplx001}
\end{equation}
with $\delta^{\tmop{int}}$ being a $\mathbb{R} [[\hbar]]$-linear derivation on
$\widehat{\tmop{Sym}} (M) [[\hbar]]$. Particularly, if the theory is
constructed on a closed manifold, then the observable complex is expected to
have the form
\begin{equation}
  (\widehat{\tmop{Sym}} (M) [[\hbar]], d + \hbar \Delta + \{ I, - \})
  \label{abstctclsdmfdintobscmplx001}
\end{equation}
where $I \in \widehat{\tmop{Sym}} (M) [[\hbar]]$ is a degree $0$ element
satisfying the QME for $(\widehat{\tmop{Sym}} (M), d, \Delta)$. We call $I$
the ``action functional'' encoding the interaction. More precisely, we require
\begin{equation}
  I \in \widehat{\tmop{Sym}} (M) [[\hbar]]^+, \label{interactprprsmall}
\end{equation}
where
\begin{equation}
  \widehat{\tmop{Sym}} (M) [[\hbar]]^+ \assign \left( \prod_{n \geqslant 3}
  \tmop{Sym}^n (M) + \hbar \widehat{\tmop{Sym}} (M) [[\hbar]] \right) \subset
  \widehat{\tmop{Sym}} (M) [[\hbar]] . \label{itactnfctnlspcwell213}
\end{equation}
This implies that, given the SDR (\ref{clsclobssdrini001}), $\sum_{n = 0}^{+
\infty} ((\hbar \Delta + \{ I, - \}) K^{\tmop{sym}})^n$ is well defined on
$\widehat{\tmop{Sym}} (M) [[\hbar]]$. (This point can be checked by analyzing
the $\hbar$-grading and symmetric tensor power grading, which is left as an
exercise.) So $\hbar \Delta + \{ I, - \}$ is a small perturbation to
(\ref{clsclobssdrini001}). To express the perturbation result we need to
introduce another notation.

\begin{definition}
  \label{HRGoprt1stand2nd}Let $G$ be a $2$-to-$0$ operator of degree $0$ on
  $\widehat{\tmop{Sym}} (M)$. We define {\tmstrong{the first homotopic
  renormalization group (HRG) operator}}
  \[ \mathcal{W} (G, -) : \widehat{\tmop{Sym}} (M) [[\hbar]]^+ \mapsto
     \widehat{\tmop{Sym}} (M) [[\hbar]]^+ \]
  by this formal formula
  \[ \mathcal{W} (G, I) \assign \hbar \log (e^{\hbar G} e^{I / \hbar}) \quad
     \text{for } I \in \widehat{\tmop{Sym}} (M) [[\hbar]]^+ . \]
  The real content of this formula is a summation over connected Feynman graph
  expansion. We refer to {\cite[Chapter 2]{costellorenormalization}} for
  details of this definition. (Note that the operator $G$ here corresponds to
  ``contracting with propagator'' in the reference, so their notation differs
  a little from ours.)
  
  Similarly, we define {\tmstrong{the second HRG operator}}
  \[ \mathcal{W} (G, -, -) : \widehat{\tmop{Sym}} (M) [[\hbar]]^+ \times
     \widehat{\tmop{Sym}} (M) [[\hbar]] \mapsto \widehat{\tmop{Sym}} (M)
     [[\hbar]] \]
  by this formal formula
  \[ \mathcal{W} (G, I, f) \assign e^{-\mathcal{W} (G, I) / \hbar} e^{\hbar G}
     (e^{I / \hbar} f) \quad \text{for } I \in \widehat{\tmop{Sym}} (M)
     [[\hbar]]^+, f \in \widehat{\tmop{Sym}} (M) [[\hbar]] . \]
  Actually this is also a summation over connected Feynman graphs, with the
  restriction that each graph should contain exactly one vertex representing
  $f$.
\end{definition}

We impose another condition (only within the current subsection) between the
BV operator $\Delta$ and the map $K$ in (\ref{sdrinidata}):
\begin{equation}
  \Delta ((K m_1) m_2) = (- 1)^{| m_1 |} \Delta (m_1 K m_2) \quad \text{for }
  \forall m_1, m_2 \in M. \label{bvoprtcptblwithhmtpyinv00}
\end{equation}
This implies
\begin{equation}
  (\Delta K^{\tmop{sym}})_2 (m_1 K m_2) = 0 \quad \text{for } \forall m_1, m_2
  \in M, \label{ppgtvanishkfactorcdt}
\end{equation}
and
\begin{equation}
  (\Delta K^{\tmop{sym}})_2 (m_1 i m_2) = 0 \quad \text{for } \forall m_1, m_2
  \in M. \label{ppgtvanishifactorcdt}
\end{equation}
\begin{remark}
  If the SDR (\ref{sdrinidata}) for classical linear observables comes from a
  Hodge decomposition of field space on closed manifold,
  (\ref{bvoprtcptblwithhmtpyinv00}) will be automatically satisfied. It is
  convenient to illustrate this point schematically by considering finite
  dimensional field space. We refer to {\cite[Section 3.2]{doubek2018quantum}}
  for Hodge decomposition of dg (degree $- 1$) symplectic vector space and how
  it induces an SDR.
\end{remark}

\begin{proposition}
  \label{intactvthyclsdmfdeffcorigsdrrslt217}In above settings, we can write
  down the perturbation result of (\ref{clsclobssdrini001}) by $\hbar \Delta +
  \{ I, - \}$ as follows:
  \begin{equation}
    (\widehat{\tmop{Sym}} (N) [[\hbar]], b_{\hbar}^{\tmop{int}})
    \begin{array}{c}
      i_{\hbar}^{\tmop{int}}\\
      \rightleftharpoons\\
      p_{\hbar}^{\tmop{int}}
    \end{array} (\widehat{\tmop{Sym}} (M) [[\hbar]], d + \hbar \Delta + \{ I,
    - \}), K_{\hbar}^{\tmop{int}} \label{endendend217real}
  \end{equation}
  where
  \begin{eqnarray*}
    b_{\hbar}^{\tmop{int}} & = & b + \hbar p \Delta i + \{ I_{\tmop{eff}}, -
    \}_{p \Delta i}\\
    p_{\hbar}^{\tmop{int}} & = & p\mathcal{W} ((\Delta K^{\tmop{sym}})_2, I,
    -),
  \end{eqnarray*}
  with $\{ -, - \}_{p \Delta i}$ being the BV bracket on $\widehat{\tmop{Sym}}
  (N)$ induced by $p \Delta i$, and
  \[ I_{\tmop{eff}} \assign p\mathcal{W} ((\Delta K^{\tmop{sym}})_2, I) \in
     \widehat{\tmop{Sym}} (N) [[\hbar]]^+ . \]
\end{proposition}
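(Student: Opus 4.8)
The plan is to reduce everything to the free-theory computation of Proposition \ref{freethyeffcorigsdrrslt28} by combining two conjugation identities: the one expressing $\{I, -\}$ as an inner perturbation, and the one (already used in the free case) relating $\Delta$ to $\Delta^{\infty} \assign \Delta i p$. Write $G \assign (\Delta K^{\tmop{sym}})_2$ and $\mathcal{W} \assign \mathcal{W}(G, I)$. From the second-order property of $\Delta$ and of $\Delta^{\infty}$ one has $d + \hbar\Delta + \{I, -\} = e^{-I/\hbar}(d + \hbar\Delta)e^{I/\hbar}$ and $d + \hbar\Delta = e^{-\hbar G}(d + \hbar\Delta^{\infty})e^{\hbar G}$, the latter being exactly the conjugation performed in the proof of Proposition \ref{freethyeffcorigsdrrslt28}. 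Composing them, the interacting differential is conjugate to the $\Delta^{\infty}$-differential by the operator $e^{\hbar G}e^{I/\hbar}$.

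The key step is to recognize this conjugating operator through the HRG operators of Definition \ref{HRGoprt1stand2nd}. By definition $e^{\hbar G}(e^{I/\hbar}f) = e^{\mathcal{W}/\hbar}\,\mathcal{W}(G, I, f)$, so as operators $e^{\hbar G}e^{I/\hbar}$ factors as multiplication by $e^{\mathcal{W}/\hbar}$ followed by $\mathcal{W}(G, I, -)$. Feeding this into the composition above and using the conjugation identity once more for $\Delta^{\infty}$ yields
\[ d + \hbar\Delta + \{I, -\} = \mathcal{W}(G, I, -)^{-1}\bigl(d + \hbar\Delta^{\infty} + \{\mathcal{W}, -\}_{\Delta^{\infty}}\bigr)\,\mathcal{W}(G, I, -), \]
where $\{-, -\}_{\Delta^{\infty}}$ is the bracket of the BV algebra $(\widehat{\tmop{Sym}}(M), d, \Delta^{\infty})$. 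This is the structural heart of the proof: it presents the target differential of (\ref{endendend217real}) as an honest conjugate, by the invertible operator $\mathcal{W}(G, I, -)$, of a differential built from the free data $\Delta^{\infty}$ and the renormalized action $\mathcal{W}$.

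With the displayed identity in hand I would run the two moves of the free-case proof with the interaction carried along. Applying Lemma \ref{1stcmpstnhpllemma}, first perturb the intermediate SDR (\ref{intermdatcalfreeqtmptbtn}) by $\{\mathcal{W}, -\}_{\Delta^{\infty}}$; using $\Delta^{\infty} = \Delta i p$, the SDR relation $p K^{\tmop{sym}} = 0$, and the vanishing conditions (\ref{ppgtvanishkfactorcdt}), (\ref{ppgtvanishifactorcdt}) (consequences of (\ref{bvoprtcptblwithhmtpyinv00})), I would verify the hypotheses \ref{corhplpresvproj001A} of Proposition \ref{corcorhplpresvproj} and \ref{corhplpresvinj001A} of Proposition \ref{corcorhplpresvinj}, so that $i$ and $p$ survive this step. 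Then conjugate the target by $\mathcal{W}(G, I, -)^{-1}$: the same conditions yield the hypotheses \ref{conjupropprojB} and \ref{conjupropinjB} of Propositions \ref{conjuptbtnprojcomcase} and \ref{conjuptbtninjcomcase}, so the projection becomes $p\,\mathcal{W}(G, I, -) = p_{\hbar}^{\tmop{int}}$ while the source differential is conjugated by the induced $W$ into $b_{\hbar}^{\tmop{int}}$. Setting $I_{\tmop{eff}} = p\mathcal{W}$ and noting that the effective BV operator is $p\Delta i$, the interacting contribution collapses to $\{I_{\tmop{eff}}, -\}_{p\Delta i}$, giving the claimed $b_{\hbar}^{\tmop{int}} = b + \hbar p\Delta i + \{I_{\tmop{eff}}, -\}_{p\Delta i}$.

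The hard part will be two intertwined points. The first is the passage through the HRG operators: justifying the factorization $e^{\hbar G}e^{I/\hbar} = e^{\mathcal{W}/\hbar}\mathcal{W}(G, I, -)$ is the linked-cluster/exponentiation theorem (the content of \cite[Chapter 2]{costellorenormalization}), and one must simultaneously argue that, although $e^{\pm I/\hbar}$ and $e^{\pm\mathcal{W}/\hbar}$ separately carry negative powers of $\hbar$, they only ever enter in the well-defined combinations $\mathcal{W}$ and $\mathcal{W}(G, I, -)$, exactly as for the QME reformulation discussed after (\ref{qmesmplestcase}). The second, more delicate point is the final identification of the transferred differential: since $p$ is not an algebra map, $p\{\mathcal{W}, -\}_{\Delta^{\infty}} i$ does not obviously equal $\{I_{\tmop{eff}}, -\}_{p\Delta i}$, and collapsing it — equivalently, checking that $I_{\tmop{eff}}$ solves the QME of the effective BV algebra $(\widehat{\tmop{Sym}}(N), b, p\Delta i)$ — is precisely the statement that $p$ becomes a morphism of BV algebras once all propagator contractions are resummed; the vanishing conditions (\ref{ppgtvanishkfactorcdt})--(\ref{ppgtvanishifactorcdt}) are what remove the remaining non-multiplicative corrections.
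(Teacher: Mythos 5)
Your overall strategy is essentially the paper's, repackaged: a multi-step application of Lemma \ref{1stcmpstnhpllemma} on top of the free-theory computation, formal conjugation identities made rigorous by passing to $\prod_{n} (\tmop{Sym}^n (-) ((\hbar)))$, and the vanishing conditions (\ref{ppgtvanishkfactorcdt})--(\ref{ppgtvanishifactorcdt}) feeding into Proposition \ref{conjuptbtnprojcomcase}. Two remarks on your packaging (keeping your notation $G = (\Delta K^{\tmop{sym}})_2$, $\mathcal{W} = \mathcal{W}(G,I)$). First, the ``factorization'' $e^{\hbar G} e^{I/\hbar} = e^{\mathcal{W}/\hbar} \mathcal{W}(G,I,-)$ is not a theorem to be proved: it is literally Definition \ref{HRGoprt1stand2nd}; Costello's graph expansion is only needed to know that both combinations are defined over $\mathbb{R}[[\hbar]]$. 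Second, your route of transferring $\{\mathcal{W},-\}_{\Delta^{\infty}}$ as an honest perturbation of (\ref{intermdatcalfreeqtmptbtn}) before conjugating is a legitimate variant of the paper's proof (which instead conjugates $e^{-I/\hbar}$ on top of (\ref{freeqtmeffthyreslt})), and it even has an advantage: the identity $p \{\mathcal{W},-\}_{\Delta^{\infty}} i = \{ I_{\tmop{eff}}, - \}_{p \Delta i}$ comes out directly, because $\Delta^{\infty} = \Delta i p$ routes every mixed contraction through $i(N)$ and $p^{\tmop{sym}}$ \emph{is} an algebra morphism (your stated worry that $p$ is not an algebra map is unfounded). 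So you never need to separately check that $I_{\tmop{eff}}$ solves the effective QME, which is how the paper produces this bracket, via the conjugation $W = e^{-I_{\tmop{eff}}/\hbar}$.

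The step that fails as written is the injection side. Neither statement \ref{corhplpresvinj001A} of Proposition \ref{corcorhplpresvinj} (for the perturbation by $\{\mathcal{W},-\}_{\Delta^{\infty}}$) nor statement \ref{conjupropinjB} of Proposition \ref{conjuptbtninjcomcase} (for the conjugation) holds in the interacting case: $\{\mathcal{W}, i(f)\}_{\Delta^{\infty}}$ is a sum of terms $\pm \Delta(i p (w_r)\, i(f_s)) \cdot w_1 \cdots \widehat{w_r} \cdots w_k \cdot (\text{remaining } i(f)\text{-factors})$, whose leftover factors $w_j$ are arbitrary elements of $M$, and $K^{\infty} = K^{\tmop{sym}} (1 - \hbar \Delta^{\infty} K^{\tmop{sym}})^{-1}$ does not annihilate such products, since $K$ can land on a $w_j$. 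This is precisely why the paper's proof invokes only the projection statement, Proposition \ref{conjuptbtnprojcomcase}, and why the proposition gives no formula for $i_{\hbar}^{\tmop{int}}$. Fortunately the error is inessential: by Lemma \ref{hplhplhpl} the first-step injection satisfies $i_1 - i \in \tmop{im}(K^{\infty})$, and since $p\, e^{\hbar G} e^{I/\hbar} K^{\tmop{sym}} = 0$ by (\ref{ppgtvanishkfactorcdt}), you still get $p\, \mathcal{W}(G,I,-)\, K_1 = 0$ and, combining this with $p\, e^{\hbar G} e^{I/\hbar} i = e^{I_{\tmop{eff}}/\hbar}$ (a consequence of (\ref{ppgtvanishifactorcdt})), also $p\, \mathcal{W}(G,I,-)\, i_1 = 1$. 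Condition \ref{conjupropprojB} alone then yields $W = 1$, hence the projection $p\,\mathcal{W}(G,I,-)$ and the differential $b_{\hbar}^{\tmop{int}}$ exactly as claimed. Delete the injection claims and your proof is correct.
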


\begin{proof}
  By associativity of perturbation theory (Lemma \ref{1stcmpstnhpllemma}), we
  can use $\{ I, - \}$ to perturb (\ref{freeqtmeffthyreslt}):
  \[ (\widehat{\tmop{Sym}} (N) [[\hbar]], b + \hbar p \Delta i)
     \begin{array}{c}
       i\\
       \rightleftharpoons\\
       p_{\hbar} = p e^{\hbar (\Delta K^{\tmop{sym}})_2}
     \end{array} (\widehat{\tmop{Sym}} (M) [[\hbar]], d + \hbar \Delta),
     K_{\hbar} \]
  to find out the desired result. (Lemma \ref{1stcmpstnhpllemma} contains the
  fact that this perturbation is also small.)
  
  Recall we have a formal conjugation
  \[ d + \hbar \Delta + \{ I, - \} = e^{- I / \hbar} (d + \hbar \Delta) e^{I /
     \hbar} \]
  Although $U \assign e^{- I / \hbar}$ is not a well-defined operator on
  $\widehat{\tmop{Sym}} (M) [[\hbar]]$, we can still go through the following
  formal calculation:
  \begin{eqnarray*}
    p_{\hbar} U^{- 1} K^{\tmop{sym}} & = & p e^{\hbar (\Delta
    K^{\tmop{sym}})_2} e^{I / \hbar} K^{\tmop{sym}} = 0.
  \end{eqnarray*}
  This is a consequence of (\ref{ppgtvanishkfactorcdt}). Also by
  (\ref{ppgtvanishifactorcdt}) we have
  \begin{eqnarray*}
    p_{\hbar} U^{- 1} i & = & p e^{\hbar (\Delta K^{\tmop{sym}})_2} e^{I /
    \hbar} i\\
    & = & (p e^{\hbar (\Delta K^{\tmop{sym}})_2} e^{I / \hbar})\\
    & = & (p e^{\mathcal{W} ((\Delta K^{\tmop{sym}})_2, I) / \hbar})\\
    & = & e^{I_{\tmop{eff}} / \hbar} .
  \end{eqnarray*}
  So, formally we have verified statement \ref{conjupropprojB} in Proposition
  \ref{conjuptbtnprojcomcase}
  \[ p_{\hbar} U^{- 1} K_{\hbar} = 0, \qquad (p_{\hbar} U^{- 1} i)^{- 1} =
     e^{- I_{\tmop{eff}} / \hbar}, \]
  and we can formally write down the perturbed projection predicted by
  Proposition \ref{conjuptbtnprojcomcase}
  \begin{eqnarray*}
    (p_{\hbar} U^{- 1} i)^{- 1} p_{\hbar} U^{- 1} & = & e^{- I_{\tmop{eff}} /
    \hbar} p e^{\hbar (\Delta K^{\tmop{sym}})_2} e^{I / \hbar}\\
    & = & (p e^{-\mathcal{W} ((\Delta K^{\tmop{sym}})_2, I) / \hbar}) p
    e^{\hbar (\Delta K^{\tmop{sym}})_2} e^{I / \hbar}\\
    & = & p e^{-\mathcal{W} ((\Delta K^{\tmop{sym}})_2, I) / \hbar} e^{\hbar
    (\Delta K^{\tmop{sym}})_2} e^{I / \hbar}\\
    & = & p\mathcal{W} ((\Delta K^{\tmop{sym}})_2, I, -)
  \end{eqnarray*}
  which is exactly $p_{\hbar}^{\tmop{int}}$.
  
  Since (formally) we have
  \[ (b + \hbar p \Delta i) e^{I_{\tmop{eff}} / \hbar} = (b + \hbar p \Delta
     i) p_{\hbar} e^{I / \hbar} = p_{\hbar} (d + \hbar \Delta) e^{I / \hbar} =
     0, \]
  we can formally write down the perturbed differential on
  $\widehat{\tmop{Sym}} (N) [[\hbar]]$ predicted by Proposition
  \ref{conjuptbtnprojcomcase}
  \[ e^{- I_{\tmop{eff}} / \hbar} b_{\hbar} e^{I_{\tmop{eff}} / \hbar} = b +
     \hbar p \Delta i + \{ I_{\tmop{eff}}, - \}_{p \Delta i} \]
  which is exactly $b_{\hbar}^{\tmop{int}}$.
  
  In the current context, all the above formal arguments make (the proof of)
  Proposition \ref{conjuptbtnprojcomcase} really works, because we can handle
  the intermediate calculations by involving $\prod_{n \geqslant 0}
  (\tmop{Sym}^n (M) ((\hbar)))$ and $\prod_{n \geqslant 0} (\tmop{Sym}^n (N)
  ((\hbar)))$. This completes the proof.
\end{proof}

So we obtain an effective interactive theory, encoded by an effective action
functional $I_{\tmop{eff}}$ which is a solution to QME of the effective free
theory $(\widehat{\tmop{Sym}} (N), b, p \Delta i)$. We also obtain a concrete
formula for the projection map $p_{\hbar}^{\tmop{int}}$ from the observable
complex of the renormalized theory to that of the effective theory. Feynman
graph calculations are packaged in the formulae for $I_{\tmop{eff}}$ and
$p_{\hbar}^{\tmop{int}}$, supporting our language as a substitute for
physicists' (perturbative) path integral story.

\begin{remark}
  \label{20220311lbl2ndrmk}As mentioned in the introduction, our terminology
  ``effective'' differs from that in {\cite{costellorenormalization}}. The
  effective action $I_{\tmop{eff}}$ here is the ``restriction of scale
  $\infty$ effective action to harmonic fields'' there \ (see
  {\cite[Proposition 10.7.2]{costellorenormalization}} for details).
\end{remark}

\section{TQM on $\mathbb{R}_{\geqslant 0}$: the Renormalized
Theory}\label{sctn3defthy220307}

We have seen that, for a free QFT or an interactive QFT on closed manifold,
the effective theory can be described using the same kind of algebraic
structure of the renormalized theory. For an interactive QFT on non-compact
manifold, we can still construct an observable complex that looks like
(\ref{abstctclsdmfdintobscmplx001}), but now
\begin{equation}
  I \nin \widehat{\tmop{Sym}} (M) [[\hbar]], \label{itactnnotobsvble}
\end{equation}
i.e., the action functional is not an observable (because integration might be
divergent on a non-compact manifold). Actually this is a very common case,
because a QFT on a closed manifold can be restricted to any open subset of
this manifold (see {\cite[Section 2.14]{costellorenormalization}} and
{\cite[Section 8.7]{costello_gwilliam_2021}} for details).

On a manifold $X$ with boundary $\partial X$, if there is a QFT constructed
using our current formulation, we can then restrict it to a (small) tubular
neighborhood $T \simeq [0, \varepsilon) \times \partial X$ of the boundary
$\partial X$. Then, an effective observable complex of this QFT on $T$ might
be regarded as a system on $\partial X$. Because of (\ref{itactnnotobsvble}),
if we calculate this complex by homological perturbation, the argument leading
to Proposition \ref{intactvthyclsdmfdeffcorigsdrrslt217} fails, and the result
can exceed the scope of BV formalism. Certainly we are interested in such
results.

With the presence of spacetime boundary, renormalization has not been
systematically developed yet in general. If the spacetime is $\mathbb{H}^n$
equipped with the Euclidean metric, discussion of heat kernel renormalization
can be found in {\cite{albert2016heatmfdwtbdr}}. Later, Eugene Rabinovich
formulated the renormalized theories and factorization algebras for field
theories which are ``topological normal to the boundary''
{\cite{rabinovich2021factorization}}. If we restrict such a theory on $X$ to a
tubular neighborhood of $\partial X$, the result will be equivalent to a QFT
on $\mathbb{R}_{\geqslant 0} \times \partial X$. If $\partial X$ is a point,
this is a topological quantum mechanics, which is the main case we study in
this work.

In this section, relevant constructions will be extracted from
{\cite{rabinovich2021factorization}}, making up the renormalized theories.
Effective theories will be calculated in the next section.

\subsubsection*{Content of field space}

Let $(V, Q^{\partial})$ be a cochain complex of finite dimensional vector
space. By Leibniz rule, the differential $Q^{\partial}$ induces differentials
on various tensors of $V, V^{\ast}$, still denoted by $Q^{\partial}$. Let
\begin{equation}
  \omega^{\partial} \in \wedge^2 (V^{\ast}), \quad \text{s.t. } Q^{\partial}
  \omega^{\partial} = 0 \label{dgsplctccptbcdtn}
\end{equation}
be a degree $0$ symplectic pairing on $V$ compatible with $Q^{\partial}$. Let
$L, L'$ be two Lagrangian subspaces of $V$ satisfying
\begin{equation}
  V = L \oplus L', \quad Q^{\partial} (L) \subseteq L, \quad Q^{\partial} (L')
  \subseteq L' . \label{dgsplctcplrztncptb}
\end{equation}
For $v \in V$, the map $v \mapsto \omega^{\partial} (v, -)$ induces a vector
space isomorphism $V \simeq V^{\ast}$, hence also a vector space isomorphism
$\wedge^2 V \simeq \wedge^2 (V^{\ast})$. Let
\[ K^{\partial} \in \wedge^2 (V) \]
be the image of $\omega^{\partial}$ under $\wedge^2 (V^{\ast}) \simeq \wedge^2
V$. If we regard $K^{\partial}$ as an element in
\[ V^{\otimes 2} = (L \otimes L) \oplus (L' \otimes L') \oplus (L \otimes L')
   \oplus (L' \otimes L), \]
we can write
\begin{equation}
  K^{\partial} = K^{\partial}_- + K^{\partial}_+, \quad K^{\partial}_- \in L
  \otimes L', K^{\partial}_+ \in L' \otimes L, \text{ s.t. } \sigma
  K^{\partial}_- = - K^{\partial}_+, \label{splctckrnldecmpstn}
\end{equation}
where $\sigma$ permutes the two factors of $V^{\otimes 2}$. By
(\ref{dgsplctccptbcdtn}),
\begin{equation}
  Q^{\partial} K^{\partial}_- = Q^{\partial} K^{\partial}_+ = 0.
  \label{closedcdtnofsmplctckernel3501}
\end{equation}
We fix the spacetime manifold to be $\mathbb{R}_{\geqslant 0}$. Let
$\iota^{\ast} : \Omega^{\bullet} (\mathbb{R}_{\geqslant 0}) \otimes V \mapsto
V$ denote the pullback of $V$-valued forms induced by the inclusion map $\iota
: \{ 0 \} \hookrightarrow \mathbb{R}_{\geqslant 0}$. The {\tmstrong{field
space}} of our theory is
\begin{equation}
  \mathcal{E}_L \assign \{ f \in \Omega^{\bullet} (\mathbb{R}_{\geqslant 0})
  \otimes V| \iota^{\ast} f \in L \} . \label{fildspcdfntn036}
\end{equation}
Note that we have chosen a boundary condition in above definition. The set of
classical linear observables is the dual space $\mathcal{E}_L^{\ast}$ (we have
mentioned the meaning of dual in the convention part), and
\[ \mathcal{O} (\mathcal{E}_L) \assign \widehat{\tmop{Sym}}
   (\mathcal{E}_L^{\ast}) \]
is the set of all classical observables. By construction we have a surjection
\[ \widehat{\tmop{Sym}} ((\Omega^{\bullet} (\mathbb{R}_{\geqslant 0}) \otimes
   V)^{\ast}) \mapsto \mathcal{O} (\mathcal{E}_L) . \]
There is a differential $Q \assign \mathd + Q^{\partial}$ on $\mathcal{E}_L$,
where $\mathd$ is the de Rham differential on $\Omega^{\bullet}
(\mathbb{R}_{\geqslant 0})$. $Q$ induces differentials on
$\mathcal{E}_L^{\ast}$ and $\mathcal{O} (\mathcal{E}_L)$, still denoted by
$Q$.

There is a subcomplex of $(\mathcal{E}_L, Q)$
\begin{equation}
  \left( \mathcal{E}_{L, c} \assign \left\{ f \in \mathcal{E}_L |f \text{ is
  compactly supported} \right\}, Q \right) . \label{cpctsupfldspc111}
\end{equation}
$\omega^{\partial}$ can be $\Omega^{\bullet} (\mathbb{R}_{\geqslant
0})$-linearly extended to a map $\mathcal{E}_L \times \mathcal{E}_L \mapsto
\Omega^{\bullet} (\mathbb{R}_{\geqslant 0})$. (The wedge product on
$\Omega^{\bullet} (\mathbb{R}_{\geqslant 0})$ is implicitly used.) Then, we
have a degree $- 1$ pairing:
\[ \int_{\mathbb{R}_{\geqslant 0}} \omega^{\partial} : \enspace
   \mathcal{E}_{L, c} \times \mathcal{E}_L \mapsto \mathbb{R} \qquad (f, g)
   \mapsto \int_{\mathbb{R}_{\geqslant 0}} \omega^{\partial} (f, g) . \]
It induces an embedding of cochain complex
\begin{equation}
  (\mathcal{E}_{L, c} [1], - Q) \hookrightarrow (\mathcal{E}_L^{\ast}, Q)
  \qquad \eta f \mapsto \int_{\mathbb{R}_{\geqslant 0}} \omega^{\partial} (f,
  -), \label{embedofsmrobsvbtoobsvb}
\end{equation}
where $\eta$ is a formal variable of degree $- 1$, and we use $\eta f$ to
represent the element in $\mathcal{E}_{L, c} [1]$ corresponding to $f \in
\mathcal{E}_{L, c}$. This embedding is actually a quasi-isomorphism (see
{\cite[Appendix A.3]{rabinovich2021factorization}} for details). So, we have a
set of ``smeared observables'' quasi-isomorphic to $\mathcal{O}
(\mathcal{E}_L)$:
\[ \mathcal{O}_{\tmop{sm}} (\mathcal{E}_L) \assign \widehat{\tmop{Sym}}
   (\mathcal{E}_{L, c} [1]) . \]
Let $\Delta_0$ be the $2$-to-$0$ operator on $\mathcal{O}_{\tmop{sm}}
(\mathcal{E}_L)$ decided by
\[ \Delta_0 (\eta f_1 \eta f_2) = \int_{\mathbb{R}_{\geqslant 0}}
   \omega^{\partial} ((- 1)^{| f_1 |} f_1, f_2) . \]
Then it is easy to verify:

\begin{proposition}
  \label{smrobsvbdfbvalg}$(\mathcal{O}_{\tmop{sm}} (\mathcal{E}_L), - Q,
  \Delta_0)$ is a differential BV algebra.
\end{proposition}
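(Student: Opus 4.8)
The plan is to apply Proposition \ref{freeqtmdgbveg} directly, taking $M = \mathcal{E}_{L, c} [1]$, differential $d = - Q$, and BV operator $\Delta = \Delta_0$. First I would record that $\Delta_0$ is, by its very definition, a $2$-to-$0$ operator: it is declared to vanish on $\tmop{Sym}^{\leqslant 1}$, to send $\tmop{Sym}^2$ into $\tmop{Sym}^0 = \mathbb{R}$, and to act on higher symmetric powers by the second-order Leibniz formula. A short degree count confirms it has degree $1$: writing $\eta f_j$ for the degree $(| f_j | - 1)$ generator, the monomial $\eta f_1 \, \eta f_2$ has degree $| f_1 | + | f_2 | - 2$, while the degree $- 1$ pairing $\int_{\mathbb{R}_{\geqslant 0}} \omega^{\partial}$ is nonzero only when $| f_1 | + | f_2 | = 1$, landing in $\mathbb{R}$ in degree $0$; hence $\Delta_0$ raises degree by one. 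Granting the single hypothesis of Proposition \ref{freeqtmdgbveg}, all remaining axioms of Definition \ref{dfntndgbvalgbrasec2229} --- the second-order property of $\Delta_0$, the relation $\Delta_0^2 = 0$ (Proposition \ref{2t0oprtallcmt}), and $[- Q, \Delta_0] = 0$ (Proposition \ref{2t0oprtcmtderto2t0}) --- come for free.

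So the whole proof reduces to verifying that hypothesis, namely that $\Delta_0 \, (- Q) (\eta f_1 \, \eta f_2) = 0$ for all $f_1, f_2 \in \mathcal{E}_{L, c}$. Since $- Q$ acts as a derivation and the differential on $\mathcal{E}_{L, c} [1]$ is $- Q$ (see (\ref{embedofsmrobsvbtoobsvb})), expanding the derivation and applying $\Delta_0$ to the two resulting length-$2$ monomials turns this into a statement purely about the smeared pairing: up to an overall Koszul sign it asserts
\[ \int_{\mathbb{R}_{\geqslant 0}} \omega^{\partial} (Q f_1, f_2) + (\pm)_{\tmop{Kos}} \int_{\mathbb{R}_{\geqslant 0}} \omega^{\partial} (f_1, Q f_2) = 0, \]
i.e. that $Q$ is (graded) skew-adjoint for $\int_{\mathbb{R}_{\geqslant 0}} \omega^{\partial}$. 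I would split $Q = \mathd + Q^{\partial}$ and treat the two pieces separately.

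The $Q^{\partial}$ contribution cancels pointwise: because $\omega^{\partial}$ is $Q^{\partial}$-invariant ($Q^{\partial} \omega^{\partial} = 0$, (\ref{dgsplctccptbcdtn})), the two terms $\omega^{\partial} (Q^{\partial} f_1, f_2)$ and $(\pm)_{\tmop{Kos}} \, \omega^{\partial} (f_1, Q^{\partial} f_2)$ are negatives of one another before integrating. For the de Rham piece, the $\Omega^{\bullet} (\mathbb{R}_{\geqslant 0})$-linearity of the extended $\omega^{\partial}$ together with the graded Leibniz rule for $\mathd$ gives $\omega^{\partial} (\mathd f_1, f_2) + (\pm)_{\tmop{Kos}} \omega^{\partial} (f_1, \mathd f_2) = \mathd \bigl( \omega^{\partial} (f_1, f_2) \bigr)$, so the sum of the two de Rham integrals is $\int_{\mathbb{R}_{\geqslant 0}} \mathd \bigl( \omega^{\partial} (f_1, f_2) \bigr)$. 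By Stokes --- the compact support of $f_1$ killing the contribution at $+ \infty$ --- this equals, up to sign, the boundary value $\omega^{\partial} (\iota^{\ast} f_1, \iota^{\ast} f_2)$ at $\{ 0 \}$. Here the boundary condition enters decisively: $\iota^{\ast} f_1 \in L$ since $f_1 \in \mathcal{E}_{L, c}$, and $\iota^{\ast} f_2 \in L$ since $f_2 \in \mathcal{E}_L$ (see (\ref{fildspcdfntn036})), and $L$ is Lagrangian, so this boundary term vanishes, closing the verification.

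I expect the main obstacle to be purely bookkeeping rather than conceptual: the genuine content is just the interplay of $Q^{\partial}$-invariance, Stokes' theorem, and the Lagrangian condition killing the boundary term, but carrying the Koszul signs correctly through the degree shift $[1]$, the formal variable $\eta$, and the factor $(- 1)^{| f_1 |}$ built into $\Delta_0$ requires care. The one point worth double-checking is that the sign produced by Stokes matches the sign in the definition of $\Delta_0$, so that, had $L$ not been Lagrangian, one would see precisely a boundary anomaly; the Lagrangian hypothesis is exactly what removes it.
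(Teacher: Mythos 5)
Your proof is correct and follows exactly the route the paper intends: the paper offers no written proof beyond ``it is easy to verify,'' and Proposition \ref{freeqtmdgbveg} (together with Propositions \ref{2t0oprtallcmt} and \ref{2t0oprtcmtderto2t0}) was set up precisely so that this proposition reduces to checking $\Delta_0 Q (\eta f_1\, \eta f_2) = 0$. Your verification of that hypothesis --- pointwise cancellation from $Q^{\partial} \omega^{\partial} = 0$ for the boundary differential, and Stokes' theorem plus the Lagrangian condition $\omega^{\partial}|_{L \times L} = 0$ killing the boundary term for the de Rham part --- is exactly the content hidden in the paper's ``easy to verify,'' and your degree count for $\Delta_0$ is also right.
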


\subsubsection*{Free quantum observable complex}

In order to incorporate interaction, we have to define BV structure directly
on $\mathcal{O} (\mathcal{E}_L)$. For simplicity, we will not mention the
``doubling trick'' method in {\cite[Chapter 4]{rabinovich2021factorization}}.
Expressions for the BV operator and propagator here should be equivalent to
those in {\cite[Chapter 5.1]{rabinovich2021factorization}}.

We fix a metric on $\mathbb{R}_{\geqslant 0}$ by $\langle \partial_x,
\partial_x \rangle = 1$, where $x$ is the coordinate. Let $\mathd^{\tmop{GF}}$
denotes the Hodge dual to the de Rham operator induced by this metric:
\[ \text{d}^{\tmop{GF}} (f \tmop{dx}) = - \partial_x f \qquad \text{for } f
   \in \Omega^0 (\mathbb{R}_{\geqslant 0}) . \]
The heat kernels, being smooth functions on $\mathbb{R}_{> 0} \times
(\mathbb{R}_{\geqslant 0} \times \mathbb{R}_{\geqslant 0})$, \tmtextit{}are
the following:
\[ H_D (t, x, y) = \frac{1}{\sqrt{4 \pi t}} \left( e^{- \frac{(x - y)^2}{4 t}}
   - e^{- \frac{(x + y)^2}{4 t}} \right), \qquad H_N (t, x, y) =
   \frac{1}{\sqrt{4 \pi t}} \left( e^{- \frac{(x - y)^2}{4 t}} + e^{- \frac{(x
   + y)^2}{4 t}} \right), \]
where the indice ``$D$'' means Dirichlet boundary condition, and indice
``$N$'' means Neumann boundary condition. Let $H_t \assign H_D (t, x, y)
\tmop{dy} - H_N (t, x, y) \tmop{dx}$, then it is direct to verify
\begin{eqnarray}
  \left( 1 \otimes \text{d} + \text{d} \otimes 1 \right) H_t & = & 0
  \hspace{5em}  \label{hkderhamclosedcdt37}\\
  - \frac{1}{2}  \left( 1 \otimes \text{d} + \text{d} \otimes 1 \right) \left(
  \text{d}^{\tmop{GF}} \otimes 1 + 1 \otimes \mathd^{\tmop{GF}} \right) H_t &
  = & \partial_t H_t . \nonumber
\end{eqnarray}
So, for $\forall \varepsilon, \Lambda \in \mathbb{R}_{> 0}$,
\begin{equation}
  H_{\Lambda} = H_{\varepsilon} - \frac{1}{2}  \left( 1 \otimes \text{d} +
  \text{d} \otimes 1 \right) \left( \text{d}^{\tmop{GF}} \otimes 1 + 1 \otimes
  \mathd^{\tmop{GF}} \right) \int_{\varepsilon}^{\Lambda} \tmop{dt} H_t
  \label{truehtknlppty123} .
\end{equation}
Let $\phi \in C^{\infty} (\mathbb{R})$ be a compactly supported even function
which evaluates to $1$ in a neighborhood of $\{ 0 \}$. Define
\[ \widetilde{H_t} \assign \frac{1}{\sqrt{4 \pi t}} \left( \phi (x - y) e^{-
   \frac{(x - y)^2}{4 t}} (\tmop{dy} - \tmop{dx}) - \phi (x + y) e^{- \frac{(x
   + y)^2}{4 t}} (\tmop{dy} + \tmop{dx}) \right), \]
then
\[ \widetilde{H_t} - H_t = \frac{1}{\sqrt{4 \pi t}} \left( (\phi (x - y) - 1)
   e^{- \frac{(x - y)^2}{4 t}} (\tmop{dy} - \tmop{dx}) - (\phi (x + y) - 1)
   e^{- \frac{(x + y)^2}{4 t}} (\tmop{dy} + \tmop{dx}) \right) \]
can be extended to a smooth form on $\mathbb{R}_{\geqslant 0} \times
(\mathbb{R}_{\geqslant 0} \times \mathbb{R}_{\geqslant 0})$, vanishing at $t =
0$. For $t > 0$, define the {\tmstrong{BV kernel}} to be
\begin{eqnarray}
  K_t & \assign & \frac{1}{2} \left( H_t - \frac{1}{2}  \left( 1 \otimes
  \text{d} + \text{d} \otimes 1 \right) \left( \text{d}^{\tmop{GF}} \otimes 1
  + 1 \otimes \mathd^{\tmop{GF}} \right) \int_0^t \tmop{ds} (\widetilde{H_s} -
  H_s) \right) \otimes K^{\partial}_+ \nonumber\\
  &  & - \frac{1}{2} \sigma \left( H_t - \frac{1}{2}  \left( 1 \otimes
  \text{d} + \text{d} \otimes 1 \right) \left( \text{d}^{\tmop{GF}} \otimes 1
  + 1 \otimes \mathd^{\tmop{GF}} \right) \int_0^t \tmop{ds} (\widetilde{H_s} -
  H_s) \right) \otimes K^{\partial}_-  \label{bvknldfntn311}
\end{eqnarray}
where the $\sigma$ here permutes variables $x$ and $y$, $K^{\partial}_-,
K^{\partial}_+$ are defined in (\ref{splctckrnldecmpstn}). By a little bit
calculus, we can verify that $K_t \in \tmop{Sym}^2 (\mathcal{E}_L)$, if the
tensor factors are properly rearranged. Due to
(\ref{closedcdtnofsmplctckernel3501}), (\ref{hkderhamclosedcdt37}), $K_t$ is
$Q$-closed. Define the {\tmstrong{propagator}} to be
\begin{eqnarray}
  P (\varepsilon, \Lambda) & \assign & \left( \frac{- 1}{4}  \left(
  \text{d}^{\tmop{GF}} \otimes 1 + 1 \otimes \mathd^{\tmop{GF}} \right)
  \int_{\varepsilon}^{\Lambda} \tmop{dt} \widetilde{H_t} \right) \otimes
  K^{\partial}_+ \nonumber\\
  &  & + \sigma \left( \frac{1}{4}  \left( \text{d}^{\tmop{GF}} \otimes 1 + 1
  \otimes \mathd^{\tmop{GF}} \right) \int_{\varepsilon}^{\Lambda} \tmop{dt}
  \widetilde{H_t} \right) \otimes K^{\partial}_-  \label{ppgtdfntn31212}
\end{eqnarray}
where $\varepsilon, \Lambda \in \mathbb{R}_{> 0}$. Similarly, $P (\varepsilon,
\Lambda) \in \tmop{Sym}^2 (\mathcal{E}_L)$. By (\ref{truehtknlppty123}), we
have
\begin{equation}
  K_{\Lambda} = K_{\varepsilon} + \left( 1 \otimes \text{d} + \text{d} \otimes
  1 \right) P (\varepsilon, \Lambda) \label{rgofbvknlandppgt310} .
\end{equation}
For $\forall E \in \tmop{Sym}^2 (\mathcal{E}_L)$, let $\partial_E$ denote the
$2$-to-$0$ operator on $\mathcal{O} (\mathcal{E}_L)$ decided by
\[ \partial_E (a_1 a_2) = (- 1)^{| E | (| a_1 | + | a_2 |)} a_1 a_2 (E) \qquad
   \text{for } a_1, a_2 \in \mathcal{E}_L^{\ast} . \]
It is direct to verify:

\begin{proposition}
  \label{freernmlzdobsvblcmplxtpthy32}Denote $\partial_{K_t}$ by $\Delta_t$,
  then,
  \begin{itemizedot}
    \item $(\mathcal{O} (\mathcal{E}_L), Q, \Delta_t)$ is a differential BV
    algebra for $\forall t > 0$.
    
    \item The embedding $\mathcal{O}_{\tmop{sm}} (\mathcal{E}_L)
    \hookrightarrow \mathcal{O} (\mathcal{E}_L)$ induced by
    (\ref{embedofsmrobsvbtoobsvb}) makes $(\mathcal{O}_{\tmop{sm}}
    (\mathcal{E}_L), - Q, \Delta_t)$ also a differential BV algebra. Moreover,
    the $t \rightarrow 0$ limit of $\Delta_t$ exists on
    $\mathcal{O}_{\tmop{sm}} (\mathcal{E}_L)$, and equals to $\Delta_0$ in
    Proposition \ref{smrobsvbdfbvalg}.
    
    \item For $\forall \varepsilon, \Lambda > 0$, $\Delta_{\Lambda} =
    \Delta_{\varepsilon} + [\partial_{P (\varepsilon, \Lambda)}, Q]$.
    Equivalently, there is such a conjugation of cochain complexes
    \[ (\mathcal{O} (\mathcal{E}_L) [[\hbar]], Q + \hbar \Delta_{\varepsilon})
       \begin{array}{c}
         e^{\hbar \partial_{P (\varepsilon, \Lambda)}}\\
         \rightleftharpoons\\
         e^{- \hbar \partial_{P (\varepsilon, \Lambda)}}
       \end{array} (\mathcal{O} (\mathcal{E}_L) [[\hbar]], Q + \hbar
       \Delta_{\Lambda}) . \]
  \end{itemizedot}
\end{proposition}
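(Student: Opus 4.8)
The plan is to reduce all three claims to the algebra of $2$-to-$0$ operators (Propositions \ref{freeqtmdgbveg}, \ref{2t0oprtallcmt}, \ref{2t0oprtcmtderto2t0}) together with two facts recorded during the construction: each BV kernel $K_t$ is $Q$-closed, and the kernels and propagator obey the renormalization group relation (\ref{rgofbvknlandppgt310}). I first note that $K_t \in \tmop{Sym}^2 (\mathcal{E}_L)$ has total degree $+1$ (one unit of de Rham form degree from $H_t$, the $V$-factors $K^{\partial}_{\pm}$ being of degree $0$), so $\Delta_t = \partial_{K_t}$ is a $2$-to-$0$ operator of degree $1$, while $P (\varepsilon, \Lambda)$ has degree $0$ and $\partial_{P (\varepsilon, \Lambda)}$ has degree $0$. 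For the first bullet I invoke Proposition \ref{freeqtmdgbveg}: $\Delta_t^2 = 0$ is automatic for a degree-$1$ $2$-to-$0$ operator (Proposition \ref{2t0oprtallcmt}), and the remaining hypothesis $\Delta_t Q (m_1 m_2) = 0$ for $m_1, m_2 \in \mathcal{E}_L^{\ast}$ follows by unwinding $\partial_{K_t}$ and dualizing $Q$ onto the field side, so that $\Delta_t Q (m_1 m_2)$ is, up to a Koszul sign, the pairing of $m_1 m_2$ against $(Q \otimes 1 + 1 \otimes Q) K_t = 0$. This gives the differential BV algebra $(\mathcal{O} (\mathcal{E}_L), Q, \Delta_t)$.

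For the second bullet I would first check that $\Delta_t$ preserves the subalgebra $\mathcal{O}_{\tmop{sm}} (\mathcal{E}_L) = \widehat{\tmop{Sym}} (\mathcal{E}_{L, c} [1])$: on a product of smeared observables, $\partial_{K_t}$ contracts two compactly supported factors against the smooth kernel $K_t$, producing a finite scalar and leaving the remaining smeared factors untouched. The same dualization as above, now carried out with the differential $-Q$ (the sign coming from the shift in (\ref{embedofsmrobsvbtoobsvb})), again reduces the compatibility hypothesis to $Q$-closedness of $K_t$, so Proposition \ref{freeqtmdgbveg} yields the differential BV algebra $(\mathcal{O}_{\tmop{sm}} (\mathcal{E}_L), -Q, \Delta_t)$, in parallel with Proposition \ref{smrobsvbdfbvalg}. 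For the $t \to 0$ limit I would use that the correction term $\int_0^t \tmop{ds}\, (\widetilde{H_s} - H_s)$ in (\ref{bvknldfntn311}) is smooth and vanishes at $t = 0$, while the Dirichlet and Neumann heat kernels act as approximate identities as $t \to 0$; pairing against the compactly supported $f_1, f_2$ then collapses $K_t$ to the diagonal, and the surviving combination of $\tfrac{1}{2}$, $\sigma$ and $K^{\partial}_{\pm}$ reproduces exactly $\int_{\mathbb{R}_{\geqslant 0}} \omega^{\partial} ((-1)^{|f_1|} f_1, f_2)$, i.e. $\Delta_0$.

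For the third bullet, by Proposition \ref{2t0oprtcmtderto2t0} the commutator $[\partial_{P (\varepsilon, \Lambda)}, Q]$ is the $2$-to-$0$ operator whose value on $\tmop{Sym}^2$ is $\partial_{P (\varepsilon, \Lambda)} Q$; dualizing $Q$ as before identifies this with $\partial_{(Q \otimes 1 + 1 \otimes Q) P (\varepsilon, \Lambda)}$. Since $K^{\partial}_{\pm}$ are $Q^{\partial}$-closed by (\ref{closedcdtnofsmplctckernel3501}), the $Q^{\partial}$-part drops out and only the de Rham part $(\mathd \otimes 1 + 1 \otimes \mathd) P (\varepsilon, \Lambda)$ survives; by the renormalization group relation (\ref{rgofbvknlandppgt310}) this equals $K_{\Lambda} - K_{\varepsilon}$, so $[\partial_{P (\varepsilon, \Lambda)}, Q] = \Delta_{\Lambda} - \Delta_{\varepsilon}$, which is the stated identity. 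For the equivalent conjugation statement I would expand $e^{\hbar \partial_P} (Q + \hbar \Delta_{\varepsilon}) e^{- \hbar \partial_P}$ as a nested-commutator series; since $\partial_P$ is a degree-$0$ $2$-to-$0$ operator it commutes with $\Delta_{\varepsilon}$ and with $\Delta_{\Lambda} - \Delta_{\varepsilon} = [\partial_P, Q]$ (all $2$-to-$0$ operators commute, Proposition \ref{2t0oprtallcmt}), so the series truncates after the linear term to give $Q + \hbar \Delta_{\varepsilon} + \hbar [\partial_P, Q] = Q + \hbar \Delta_{\Lambda}$, exhibiting $e^{\pm \hbar \partial_P}$ as the claimed mutually inverse cochain isomorphisms.

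The genuinely non-formal step is the $t \to 0$ limit in the second bullet: everything else is bookkeeping with $2$-to-$0$ operators and the already-established closedness relations, whereas this step needs the standard but analytic approximate-identity behaviour of the Dirichlet and Neumann heat kernels, together with the smoothness and vanishing at $0$ of the regularized correction $\widetilde{H_s} - H_s$. The only other place demanding care is tracking the Koszul signs in the dualization of $Q$ used repeatedly above, which must be pinned down so that the signs in $\Delta_0$ and in the identity $\Delta_{\Lambda} = \Delta_{\varepsilon} + [\partial_{P (\varepsilon, \Lambda)}, Q]$ come out correctly.
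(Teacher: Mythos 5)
Your proposal is correct and follows exactly the route the paper intends: the paper offers no written proof (it introduces the proposition with ``It is direct to verify''), and the verification it has in mind is precisely your reduction to the $2$-to-$0$ operator calculus of Propositions \ref{2t0oprtallcmt}, \ref{2t0oprtcmtderto2t0}, \ref{freeqtmdgbveg}, the $Q$-closedness of $K_t$ coming from (\ref{closedcdtnofsmplctckernel3501}) and (\ref{hkderhamclosedcdt37}), and the renormalization group relation (\ref{rgofbvknlandppgt310}), with the $t \to 0$ approximate-identity behaviour of $H_D, H_N$ and the vanishing of $\int_0^t \mathrm{ds}\, (\widetilde{H_s} - H_s)$ handling the limit statement. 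You also correctly identify the only two places requiring care (the analytic $t \to 0$ limit and the Koszul signs in dualizing $Q$), so nothing is missing.
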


\begin{remark}
  \label{bvknlandppgtpropersptppty1101}Regarded as forms on
  $\mathbb{R}_{\geqslant 0} \times \mathbb{R}_{\geqslant 0}$, $K_t$ and $P
  (\varepsilon, \Lambda)$ have proper supports. Namely, each of the projection
  maps $\pi_1, \pi_2 : \mathbb{R}_{\geqslant 0} \times \mathbb{R}_{\geqslant
  0} \mapsto \mathbb{R}_{\geqslant 0}$ is proper when restricted to
  $\tmop{supp} (K_t)$ or $\tmop{supp} (P (\varepsilon, \Lambda))$. For $P
  (\varepsilon, \Lambda)$ this is by construction, because $\tmop{supp}
  (\widetilde{H_t})$ is bounded by a proper neighborhood of the diagonal,
  uniformly for all $t$. For $K_t$, if we take $\varepsilon \rightarrow 0$ in
  (\ref{rgofbvknlandppgt310}):
  \[ K_t = K_0 + \left( 1 \otimes \text{d} + \text{d} \otimes 1 \right) P (0,
     t), \]
  we obtain a relation in the space of distributions. Since $K_0, P (0, t)$
  have proper supports, so does $K_t$.
\end{remark}

\subsubsection*{Interaction}

We have introduced the functional space $\mathcal{O} (\mathcal{E}_L)$, and
obtained a family of free observable complexes $\{ (\mathcal{O}
(\mathcal{E}_L) [[\hbar]], Q + \hbar \Delta_t) |t \in \mathbb{R}_{> 0} \}$. We
can also write down functionals on the space $\mathcal{E}_{L, c}$:
\[ \mathcal{O} (\mathcal{E}_{L, c}) \assign \widehat{\tmop{Sym}}
   (\mathcal{E}_{L, c}^{\ast}) . \]
Then, there is a natural inclusion $\mathcal{O} (\mathcal{E}_L)
\hookrightarrow \mathcal{O} (\mathcal{E}_{L, c})$. A functional $f \in
\tmop{Sym}^n (\mathcal{E}_{L, c}^{\ast})$ with $n > 0$ belongs to $\mathcal{O}
(\mathcal{E}_L)$ if $f$ has compact support. Here we regard the input of $f$
as forms on $\mathbb{R}_{\geqslant 0}^n$. The support of $f$ means
$\mathbb{R}_{\geqslant 0}^n \backslash\mathcal{U}$, where $\mathcal{U}$ is the
union of all open sets $U \subset \mathbb{R}_{\geqslant 0}^n$ satisfying $f
(e) = 0$ whenever $e$ has support on a compact subset of $U$. For $n > 0$,
define
\[ \mathcal{O}^n_{\mathcal{P}} (\mathcal{E}_{L, c}) \assign \left\{ f \in
   \tmop{Sym}^n (\mathcal{E}_{L, c}^{\ast}) | \tmop{supp} (f) \text{ is
   proper} \right\}, \]
where $\tmop{supp} (f)$ is proper if each of the $n$ projection maps
$\mathbb{R}_{\geqslant 0}^n \mapsto \mathbb{R}_{\geqslant 0}$ is proper when
restricted to $\tmop{supp} (f)$. Since a compact support is also proper,
$\tmop{Sym}^n (\mathcal{E}_L^{\ast}) \subset \mathcal{O}^n_{\mathcal{P}}
(\mathcal{E}_{L, c})$. $\mathcal{O} (\mathcal{E}_{L, c})$ has a subspace
\[ \mathcal{O}_{\mathcal{P}}^{> 0} (\mathcal{E}_{L, c}) \assign \prod_{n > 0}
   \mathcal{O}_{\mathcal{P}}^n (\mathcal{E}_{L, c}) \]
which is not a subring: for $f_1, f_2 \in \mathcal{O}_{\mathcal{P}}^{> 0}
(\mathcal{E}_{L, c})$, $f_1 f_2 \in \mathcal{O} (\mathcal{E}_{L, c})$ may not
have proper support.

\begin{remark}
  \label{rmk34biabiaddd}Due to Remark \ref{bvknlandppgtpropersptppty1101}, we
  have the following facts:
  \begin{itemizedot}
    \item $\Delta_t, \partial_{P (\varepsilon, \Lambda)}$ are well defined on
    $\mathcal{O}_{\mathcal{P}}^{> 0} (\mathcal{E}_{L, c})$.
    
    \item The BV bracket induced by $\Delta_t$:
    \[ \{ -, - \}_t : \mathcal{O} (\mathcal{E}_L) \otimes \mathcal{O}
       (\mathcal{E}_L) \mapsto \mathcal{O} (\mathcal{E}_L) \]
    can be extended to a map $\{ -, - \}_t : \mathcal{O}_{\mathcal{P}}^{> 0}
    (\mathcal{E}_{L, c}) \otimes \mathcal{O} (\mathcal{E}_L) \mapsto
    \mathcal{O} (\mathcal{E}_L)$. This is a derivation with respect to
    $\mathcal{O} (\mathcal{E}_L)$. We can further extend $\{ -, - \}_t$ to
    \[ \{ -, - \}_t : \mathcal{O}_{\mathcal{P}}^{> 0} (\mathcal{E}_{L, c})
       \otimes \mathcal{O}_{\mathcal{P}}^{> 0} (\mathcal{E}_{L, c}) \mapsto
       \mathcal{O}_{\mathcal{P}}^{> 0} (\mathcal{E}_{L, c}), \]
    which makes $\mathcal{O}_{\mathcal{P}}^{> 0} (\mathcal{E}_{L, c}) [- 1]$ a
    graded Lie algebra.
  \end{itemizedot}
  (The reasoning behind these facts is similar to the proof of {\cite[Chapter
  2, Lemma 14.5.1]{costellorenormalization}}.)
\end{remark}

So, although $\mathcal{O}_{\mathcal{P}}^{> 0} (\mathcal{E}_{L, c})$ is not an
graded algebra, the QME ``{\tmstrong{at scale $t$}}''
\[ Q I + \hbar \Delta_t I + \frac{1}{2} \{ I, I \}_t = 0 \]
is well defined for degree $0$ elements in $\mathcal{O}_{\mathcal{P}}^{> 0}
(\mathcal{E}_{L, c}) [[\hbar]]$. If $I$ is a solution, $Q + \hbar \Delta_t +
\{ I, - \}_t$ will be a differential on $\mathcal{O} (\mathcal{E}_L)
[[\hbar]]$, and we can formally write
\begin{equation}
  Q + \hbar \Delta_t + \{ I, - \}_t = e^{- I / \hbar} (Q + \hbar \Delta_t)
  e^{I / \hbar} . \label{fmlitactndiffopnclsedcase312}
\end{equation}

By definition (\ref{fildspcdfntn036}), we have a decomposition for the field
space
\begin{equation}
  \mathcal{E}_L = (\Omega^{\bullet} (\mathbb{R}_{\geqslant 0}) \otimes L)
  \oplus \{ f \in \Omega^{\bullet} (\mathbb{R}_{\geqslant 0}) \otimes L' |
  \iota^{\ast} f = 0 \} . \label{fldspcdcpstnwithdrham313}
\end{equation}
Let $\mathcal{L}, \mathcal{L}'$ denote the first and the second component,
respectively. Define
\[ \mathcal{O}_{\mathcal{P}} (\mathcal{E}_{L, c}) [[\hbar]]^+ \assign \left(
   \mathcal{O}_{\mathcal{P}}^2 (\mathcal{L}'_c) + \prod_{n \geqslant 3}
   \mathcal{O}_{\mathcal{P}}^n (\mathcal{E}_{L, c}) + \hbar
   \mathcal{O}_{\mathcal{P}}^{> 0} (\mathcal{E}_{L, c}) [[\hbar]] \right)
   \subset \mathcal{O}_{\mathcal{P}}^{> 0} (\mathcal{E}_{L, c}) [[\hbar]], \]
where $\mathcal{O}_{\mathcal{P}}^2 (\mathcal{L}'_c) \subset
\mathcal{O}_{\mathcal{P}}^2 (\mathcal{E}_{L, c})$ are functionals only
depending on $\mathcal{L}'$. Such a quadratic component at $\hbar^0$-order is
not included in (\ref{itactnfctnlspcwell213}) to ensure well-defined
perturbation, but the presence here will cause no problem. The
Definition-Lemma 4.4.6 in {\cite{rabinovich2021factorization}} tells us these
facts:
\begin{itemizedot}
  \item The first HRG operator is well defined:
  \[ \mathcal{W} (\partial_{P (\varepsilon, \Lambda)}, -) :
     \mathcal{O}_{\mathcal{P}} (\mathcal{E}_{L, c}) [[\hbar]]^+ \mapsto
     \mathcal{O}_{\mathcal{P}} (\mathcal{E}_{L, c}) [[\hbar]]^+, \]
  formally, for $I \in \mathcal{O}_{\mathcal{P}} (\mathcal{E}_{L, c})
  [[\hbar]]^+$,
  \begin{equation}
    \mathcal{W} (\partial_{P (\varepsilon, \Lambda)}, I) = \hbar \log
    (e^{\hbar \partial_{P (\varepsilon, \Lambda)}} e^{I / \hbar}) - \hbar
    (\log (e^{\hbar \partial_{P (\varepsilon, \Lambda)}} e^{I / \hbar})) (0),
    \label{fmldfntof1sthrgopnclsedcase312}
  \end{equation}
  where the second term is the (ill-defined) constant part of the first term.
  
  This operator has Feynman graph summation formula modified from that in
  Definition \ref{HRGoprt1stand2nd}, by discarding graphs without external
  leg.
  
  \item The second HRG operator is well defined:
  \[ \mathcal{W} (\partial_{P (\varepsilon, \Lambda)}, -, -) :
     \mathcal{O}_{\mathcal{P}} (\mathcal{E}_{L, c}) [[\hbar]]^+ \times
     \mathcal{O} (\mathcal{E}_L) [[\hbar]] \mapsto \mathcal{O} (\mathcal{E}_L)
     [[\hbar]], \]
  formally, for $I \in \mathcal{O}_{\mathcal{P}} (\mathcal{E}_{L, c})
  [[\hbar]]^+$,
  \begin{equation}
    \mathcal{W} (\partial_{P (\varepsilon, \Lambda)}, I, -) = (e^{\hbar
    \partial_{P (\varepsilon, \Lambda)}} e^{I / \hbar})^{- 1} e^{\hbar
    \partial_{P (\varepsilon, \Lambda)}} e^{I / \hbar} (-) .
    \label{fmldfntnof2ndhrgoprtopnclsdcs313}
  \end{equation}
  Its Feynman graph summation formula is the same with that in Definition
  \ref{HRGoprt1stand2nd}.
\end{itemizedot}
The HRG operators satisfy certain associativity. For $\Lambda_1, \Lambda_2,
\Lambda_3 > 0, I \in \mathcal{O}_{\mathcal{P}} (\mathcal{E}_{L, c})
[[\hbar]]^+$,
\begin{eqnarray}
  \mathcal{W} (\partial_{P (\Lambda_1, \Lambda_3)}, I) & = & \mathcal{W}
  (\partial_{P (\Lambda_2, \Lambda_3)}, \mathcal{W} (\partial_{P (\Lambda_1,
  \Lambda_2)}, I)), \\
  \mathcal{W} (\partial_{P (\Lambda_1, \Lambda_3)}, I, -) & = & \mathcal{W}
  (\partial_{P (\Lambda_2, \Lambda_3)}, \mathcal{W} (\partial_{P (\Lambda_1,
  \Lambda_2)}, I), \mathcal{W} (\partial_{P (\Lambda_1, \Lambda_2)}, I, -)) . 
  \label{2ndhrgasspptysec3319}
\end{eqnarray}
\begin{lemma}
  Given a degree $0$ functional $I_{\varepsilon} \in \mathcal{O}_{\mathcal{P}}
  (\mathcal{E}_{L, c}) [[\hbar]]^+$ satisfying the QME at a certain scale
  $\varepsilon > 0$:
  \[ Q I_{\varepsilon} + \hbar \Delta_{\varepsilon} I_{\varepsilon} +
     \frac{1}{2} \{ I_{\varepsilon}, I_{\varepsilon} \}_{\varepsilon} = 0, \]
  let $I_{\Lambda}$ denote $\mathcal{W} (\partial_{P (\varepsilon, \Lambda)},
  I_{\varepsilon})$ for another scale $\Lambda > 0$. Then, $I_{\Lambda}$
  satisfies the QME at scale $\Lambda$. In addition, we have a conjugation of
  cochain complexes
  \begin{equation}
    \normalsize{(\mathcal{O} (\mathcal{E}_L) [[\hbar]], Q + \hbar
    \Delta_{\varepsilon} + \{ I_{\varepsilon}, - \}_{\varepsilon})}
    \begin{array}{c}
      \normalsize{\mathcal{W} (\partial_{P (\varepsilon, \Lambda)},
      I_{\varepsilon}, -)}\\
      \rightleftharpoons\\
      \normalsize{\mathcal{W} (\partial_{P (\Lambda, \varepsilon)},
      I_{\Lambda}, -)}
    \end{array} \normalsize{(\mathcal{O} (\mathcal{E}_L) [[\hbar]], Q + \hbar
    \Delta_{\Lambda} + \{ I_{\Lambda}, - \}_{\Lambda})} .
    \label{cjgtnofqtmobsvblcplxopnclsdcase3121}
  \end{equation}
\end{lemma}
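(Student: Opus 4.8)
The plan is to reduce both assertions to the exponential reformulation of the structures involved and then transport everything across scales by the conjugation in Proposition \ref{freernmlzdobsvblcmplxtpthy32}. Recall first that, by the second-order property of $\Delta_t$, the scale-$t$ QME for a degree $0$ functional $I$ is formally equivalent to $(Q + \hbar \Delta_t) e^{I / \hbar} = 0$, exactly as in (\ref{fmlitactndiffopnclsedcase312}). The defining formula (\ref{fmldfntof1sthrgopnclsedcase312}) for the first HRG operator says precisely that $e^{I_\Lambda / \hbar} = C^{-1} e^{\hbar \partial_{P (\varepsilon, \Lambda)}} e^{I_\varepsilon / \hbar}$, where $C = e^{c / \hbar}$ is the central factor built from the discarded (ill-defined) constant part $c$.

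For the first claim I would apply $Q + \hbar \Delta_\Lambda$ to this identity. Since $C^{-1}$ is central and both $Q$ and $\Delta_\Lambda$ annihilate $\tmop{Sym}^0$, the factor $C^{-1}$ pulls out harmlessly, and the conjugation relation $\Delta_\Lambda = \Delta_\varepsilon + [\partial_{P(\varepsilon,\Lambda)}, Q]$ of Proposition \ref{freernmlzdobsvblcmplxtpthy32} — equivalently $(Q + \hbar \Delta_\Lambda) e^{\hbar \partial_{P(\varepsilon,\Lambda)}} = e^{\hbar \partial_{P(\varepsilon,\Lambda)}} (Q + \hbar \Delta_\varepsilon)$ — lets me move the differential past the propagator exponential. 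What remains is $C^{-1} e^{\hbar \partial_{P(\varepsilon,\Lambda)}} (Q + \hbar \Delta_\varepsilon) e^{I_\varepsilon / \hbar}$, which vanishes by the scale-$\varepsilon$ QME. This gives $(Q + \hbar \Delta_\Lambda) e^{I_\Lambda / \hbar} = 0$, i.e. the QME at scale $\Lambda$.

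For the conjugation of complexes I would write the twisted differentials as $D_t := Q + \hbar \Delta_t + \{ I_t, - \}_t = e^{-I_t / \hbar} (Q + \hbar \Delta_t) e^{I_t / \hbar}$, using (\ref{fmlitactndiffopnclsedcase312}), and read off from (\ref{fmldfntnof2ndhrgoprtopnclsdcs313}) that the candidate cochain map is $\Phi := \mathcal{W} (\partial_{P(\varepsilon,\Lambda)}, I_\varepsilon, -) = C^{-1} e^{-I_\Lambda / \hbar} e^{\hbar \partial_{P(\varepsilon,\Lambda)}} e^{I_\varepsilon / \hbar} (-)$, a composition of multiplication operators with $e^{\hbar \partial_{P(\varepsilon,\Lambda)}}$. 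Then $D_\Lambda \Phi = \Phi D_\varepsilon$ follows by the same three moves: expand $D_\Lambda$ through $e^{\pm I_\Lambda / \hbar}$, commute $(Q + \hbar \Delta_\Lambda)$ past $e^{\hbar \partial_{P(\varepsilon,\Lambda)}}$ via Proposition \ref{freernmlzdobsvblcmplxtpthy32}, and use $(Q + \hbar \Delta_\varepsilon) e^{I_\varepsilon / \hbar} = e^{I_\varepsilon / \hbar} D_\varepsilon$ to recognize the result as $\Phi D_\varepsilon$. That $\mathcal{W}(\partial_{P(\Lambda, \varepsilon)}, I_\Lambda, -)$ is the two-sided inverse I would get for free from the HRG associativity (\ref{2ndhrgasspptysec3319}): taking $(\Lambda_1, \Lambda_2, \Lambda_3) = (\varepsilon, \Lambda, \varepsilon)$ and $I = I_\varepsilon$, the left-hand side is $\mathcal{W}(\partial_{P(\varepsilon,\varepsilon)}, I_\varepsilon, -) = \mathcal{W}(0, I_\varepsilon, -) = \tmop{id}$, while the right-hand side is exactly $\mathcal{W}(\partial_{P(\Lambda,\varepsilon)}, I_\Lambda, -) \circ \Phi$ once one notes $\mathcal{W}(\partial_{P(\varepsilon,\Lambda)}, I_\varepsilon) = I_\Lambda$; the companion first-HRG associativity gives $\mathcal{W}(\partial_{P(\Lambda,\varepsilon)}, I_\Lambda) = I_\varepsilon$, and the reversed choice of scales yields the other composition.

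The only real obstacle is not the algebra but its well-definedness: the expressions above involve $e^{\pm I / \hbar}$ and hence negative powers of $\hbar$, and $I_\varepsilon$ lives in $\mathcal{O}_{\mathcal{P}}^{>0}(\mathcal{E}_{L,c})$ with merely proper (not compact) support. I would therefore run the entire computation inside the enlarged spaces $\prod_n \tmop{Sym}^n(\mathcal{E}_L)((\hbar))$ used in the proof of Proposition \ref{intactvthyclsdmfdeffcorigsdrrslt217}, where the formal conjugations make sense term by term, and invoke Remark \ref{rmk34biabiaddd} together with the cited Definition--Lemma 4.4.6 of \cite{rabinovich2021factorization} to guarantee that $\partial_{P(\varepsilon,\Lambda)}$, the brackets $\{-,-\}_t$, and both HRG operators preserve the relevant subspaces so that every graph sum converges. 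The single bookkeeping point to track is the discarded constant $c$ in (\ref{fmldfntof1sthrgopnclsedcase312}); since $c \in \tmop{Sym}^0$ is central and killed by $Q$ and $\Delta_t$, it never interferes with either the QME or the cochain-map identity.
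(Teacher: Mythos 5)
Your proposal is correct and follows essentially the same route the paper intends: it fleshes out the details the paper explicitly leaves to the reader, namely instantiating the formal formulae (\ref{fmlitactndiffopnclsedcase312}), (\ref{fmldfntof1sthrgopnclsedcase312}), (\ref{fmldfntnof2ndhrgoprtopnclsdcs313}), commuting $Q + \hbar \Delta$ across $e^{\hbar \partial_{P (\varepsilon, \Lambda)}}$ via Proposition \ref{freernmlzdobsvblcmplxtpthy32}, extracting the two-sided inverse from the HRG associativity (\ref{2ndhrgasspptysec3319}), and taming the divergent constant and the negative $\hbar$-powers by working in the enlarged $((\hbar))$-spaces. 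The only blemish is notational: the enlarged space should consist of functionals, $\prod_{n \geqslant 0} (\tmop{Sym}^n (\mathcal{E}_{L, c}^{\ast}) ((\hbar)))$, rather than $\prod_{n \geqslant 0} \tmop{Sym}^n (\mathcal{E}_L) ((\hbar))$ as you wrote.
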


The above properties are parallel with those for HRG operators in
{\cite{costellorenormalization,costello_gwilliam_2021}}, and can be proved
similarly using the formal formulae (\ref{fmlitactndiffopnclsedcase312}),
(\ref{fmldfntof1sthrgopnclsedcase312}) and
(\ref{fmldfntnof2ndhrgoprtopnclsdcs313}). While using these formal
expressions, we need to involve $\prod_{n \geqslant 0} (\tmop{Sym}^n
(\mathcal{E}_{L, c}^{\ast}) ((\hbar)))$ and discard ill-defined constant
factors to make sense of intermediate calculations. We leave the details to
the interested reader.

Given a function $f \in \tmop{Sym}^n (V^{\ast})$ with $n > 0$, we can extend
it $\Omega^{\bullet} (\mathbb{R}_{\geqslant 0})$-linearly and then integrate
over $\mathbb{R}_{\geqslant 0}$, thus obtain a functional in $\tmop{Sym}^n
(\mathcal{E}_{L, c}^{\ast})$. Denote this functional by $\rho (f)$, it is
supported on the diagonal of $\mathbb{R}^n_{\geqslant 0}$. In this way we have
a degree $- 1$ map:
\begin{equation}
  \rho : \quad \tmop{Sym}^2 ((L')^{\ast}) \oplus \prod_{n \geqslant 3}
  \tmop{Sym}^n (V^{\ast}) \oplus \hbar \prod_{n > 0} \tmop{Sym}^n (V^{\ast})
  [[\hbar]] \mapsto \mathcal{O}_{\mathcal{P}} (\mathcal{E}_{L, c}) [[\hbar]]^+
  \label{akszactnfnctnlcstrctn314}
\end{equation}

Now, we are ready to define the concrete theory to study.

\begin{definition}
  \label{tqmwobdrcdtn35}A UV finite topological quantum mechanics (TQM) on
  $\mathbb{R}_{\geqslant 0}$ {\tmstrong{with only bulk interaction}} consists
  of the following:
  \begin{itemizedot}
    \item a family of differential BV algebras $\{ (\mathcal{O}
    (\mathcal{E}_L), Q, \Delta_t) |t \in \mathbb{R}_{> 0} \}$ stated in
    Proposition \ref{freernmlzdobsvblcmplxtpthy32};
    
    \item a ``scale $0$'' action functional $\rho (I^{\partial}) \in
    \mathcal{O}_{\mathcal{P}} (\mathcal{E}_{L, c}) [[\hbar]]^+$, where
    $I^{\partial} \in \widehat{\tmop{Sym}} (V^{\ast}) [[\hbar]]$ has degree
    $1$, $\rho$ is the map in (\ref{akszactnfnctnlcstrctn314}).
  \end{itemizedot}
  They are required to satisfy these conditions:
  \begin{itemizedot}
    \item (UV finiteness) For a given scale $t > 0$ (hence for all scales),
    the following limit exists:
    \[ I_t \assign \lim_{\varepsilon \rightarrow 0} \mathcal{W} (\partial_{P
       (\varepsilon, t)}, \rho (I^{\partial})) . \]
    \item This limit $I_t$ is a solution to the QME at scale $t$.
  \end{itemizedot}
\end{definition}

\begin{remark}
  For simplicity, we impose the UV finiteness by hand. Detailed analysis may
  disclose that this condition is automatically satisfied, as in the case of
  TQM on $S^1$ (see {\cite{2017qtztnalgindexsiliqinli}}). We leave this
  consideration for later work.
\end{remark}

\begin{example}
  \label{eg36thycnstctn}Given $(V = L \oplus L', Q^{\partial},
  \omega^{\partial})$ as in (\ref{dgsplctccptbcdtn}) and
  (\ref{dgsplctcplrztncptb}), let $Q^{\tmop{rel}} : L' \mapsto L$ be a degree
  $1$ map, such that
  \[ Q^{\tmop{rel}} Q^{\partial} + Q^{\partial} Q^{\tmop{rel}} = 0, \text{ and
     } \omega^{\partial} (Q^{\tmop{rel}} a, b) + (- 1)^{| a |}
     \omega^{\partial} (a, Q^{\tmop{rel}} b) = 0 \]
  for $a, b \in V$. Consider the degree $- 1$ function
  \[ I^{\partial} : V \otimes V \mapsto \mathbb{R}, \qquad I^{\partial} (a, b)
     : = - \omega^{\partial} (Q^{\tmop{rel}} a, b) \text{ for } a, b \in V, \]
  it is direct to verify that actually $I^{\partial} \in \tmop{Sym}^2
  ((L')^{\ast}) \subset (V^{\ast})^{\otimes 2}$.
  
  Recall we have a decomposition $\mathcal{E}_L =\mathcal{L} \oplus
  \mathcal{L}'$ in (\ref{fldspcdcpstnwithdrham313}). The operators
  $\partial_{P (\varepsilon, t)}, \Delta_t, \{ -, - \}_t$ have a common
  feature: they always pair a linear functional factor depending on
  $\mathcal{L}$ with a linear functional factor depending on $\mathcal{L}'$.
  The functional
  \[ \rho (I^{\partial}) = - \int_{\mathbb{R}_{\geqslant 0}} \omega^{\partial}
     (Q^{\tmop{rel}} -, -) \in \mathcal{O}_{\mathcal{P}}^2 (\mathcal{L}'_c) \]
  only depends on $\mathcal{L}'$, hence satisfies
  \[ \partial_{P (\varepsilon, t)} \rho (I^{\partial}) = 0, \quad \Delta_t
     \rho (I^{\partial}) = 0, \quad \{ \rho (I^{\partial}), \rho
     (I^{\partial}) \}_t = 0 \]
  for $\forall \varepsilon, t > 0$. It is also easy to verify
  \[ \mathd \rho (I^{\partial}) = Q^{\partial} \rho (I^{\partial}) = 0. \]
  So, $I_t \assign \lim_{\varepsilon \rightarrow 0} \mathcal{W} (\partial_{P
  (\varepsilon, t)}, \rho (I^{\partial})) = \rho (I^{\partial})$, and $I_t$
  satisfies the QME at scale $t$, thus defines a theory in our sense.
\end{example}

\begin{example}
  \label{eg37thycnstctn}``BF theory with B boundary condition''
  
  Let $\mathfrak{g}$ be a Lie algebra with basis $\{ t^a \}_{a = 1}^{\ell}$,
  $[t^a, t^b] = f^{a b}_c t^c$. We impose the unimodular condition
  \begin{equation}
    f^{c b}_c = 0. \label{unmdlcdtnlalg}
  \end{equation}
  (We have used the Einstein summation convention.) Let
  \[ L \assign (\mathfrak{g}  [1])^{\ast} = (\mathfrak{g}^{\ast}) [- 1],
     \qquad L' \assign \mathfrak{g} [1] . \]
  For $\beta \in \mathfrak{g}^{\ast}$, we use $\epsilon \beta$ to denote the
  element in $L$ corresponding to $\beta$, where $\epsilon$ is a formal
  variable of degree $1$; similarly for $\alpha \in \mathfrak{g}$ we have
  $\eta \alpha \in L'$ where $\eta$ is a formal variable of degree $- 1$.
  There is a degree $0$ symplectic pairing $\omega^{\partial}$ on $V = L
  \oplus L'$, decided by
  \[ \omega^{\partial} (\epsilon t_a, \eta t^b) \assign \delta_a^b \]
  where $\{ t_a \}_{a = 1}^{\ell}$ is the basis of $\mathfrak{g}^{\ast}$ dual
  to $\{ t^a \}_{a = 1}^{\ell}$. Then, $(V, Q^{\partial} \assign 0,
  \omega^{\partial})$ satisfies (\ref{dgsplctccptbcdtn}) and
  (\ref{dgsplctcplrztncptb}).
  
  We define $I^{\partial} \in \tmop{Sym}^3 (V^{\ast}) \subset (V^{\otimes
  3})^{\ast}$ to be the function decided by
  \[ I^{\partial} (\epsilon \beta, \eta \alpha_1, \eta \alpha_2) \assign -
     \beta ([\alpha_1, \alpha_2]), \qquad \text{for } \epsilon \beta \in L,
     \eta \alpha_1, \eta \alpha_2 \in L' . \]
  Let $\{ B^a \}_{a = 1}^{\ell}$ be the basis of $L^{\ast}$ and $\{ A_a \}_{a
  = 1}^{\ell}$ be the basis of $(L')^{\ast}$ so that
  \[ B^a (\epsilon t_b) = A_b (\eta t^a) = \delta_b^a . \]
  Then we can write
  \[ I^{\partial} = \frac{1}{2} f^{a b}_c B^c A_a A_b . \]
  So, $\rho (I^{\partial}) = \frac{1}{2} \int_{\mathbb{R} \geqslant 0} f^{a
  b}_c B^c \wedge A_a \wedge A_b$ is the interaction action functional of BF
  theory on $\mathbb{R}_{\geqslant 0}$. The boundary condition we choose is
  the so-called B condition. This data indeed fits into Definition
  \ref{tqmwobdrcdtn35}, and we refer to {\cite[Section
  5.2]{rabinovich2021factorization}} for the proof.
\end{example}

\begin{example}
  \label{eg38thycnstctn}Let $\mu_1, \mu_2, \nu_1, \nu_2$ be formal variables
  of degree $0, 1, 0, - 1$, respectively. Define
  \[ L \assign \mathbb{R} \mu_1 \oplus \mathbb{R} \mu_2, \qquad L' \assign
     \mathbb{R} \nu_1 \oplus \mathbb{R} \nu_2 . \]
  There is a degree $0$ symplectic pairing $\omega^{\partial}$ on $V \assign L
  \oplus L'$, decided by
  \[ \omega^{\partial} (\mu_i, \nu_j) = \delta_{i j}, \quad \omega^{\partial}
     (\nu_i, \nu_j) = \omega^{\partial} (\mu_i, \mu_j) = 0, \qquad i, j \in \{
     1, 2 \} . \]
  Denote the dual space of $L, L'$ by
  \[ L^{\ast} =\mathbb{R}q_1 \oplus \mathbb{R}q_2, \qquad (L')^{\ast} \assign
     \mathbb{R}p_1 \oplus \mathbb{R}p_2 \]
  with $q_i (\mu_j) = p_i (\nu_j) = \delta_{i j}$ for $i, j \in \{ 1, 2 \}$.
  Then, $(V, Q^{\partial} \assign 0, \omega^{\partial})$ satisfies
  (\ref{dgsplctccptbcdtn}) and (\ref{dgsplctcplrztncptb}). Let
  \[ I^{\partial} \assign - p_1^2 p_2, \]
  then $\rho (I^{\partial}) = - \int_{\mathbb{R}_{\geqslant 0}} p_1 \wedge p_1
  \wedge p_2$ is a degree $0$ functional depending only on $\mathcal{L}'$. For
  the same reason as in Example \ref{eg36thycnstctn},
  \[ I_t \assign \lim_{\varepsilon \rightarrow 0} \mathcal{W} (\partial_{P
     (\varepsilon, t)}, \rho (I^{\partial})) = \rho (I^{\partial}) \]
  satisfies the QME at scale $t$. So we have defined a theory.
\end{example}

\section{TQM on $\mathbb{R}_{\geqslant 0}$: the Effective
Theory}\label{sectn4thycmptn220307}

\subsection{Homological perturbation
construction}\label{hplcnstctnefttqmans41}

Given a TQM as in Definition \ref{tqmwobdrcdtn35}, we have a family of quantum
observable complexes
\begin{equation}
  \{ (\mathcal{O} (\mathcal{E}_L) [[\hbar]], Q + \hbar \Delta_t + \{ I_t, -
  \}_t) |t \in \mathbb{R}_{> 0} \} \label{tqmwobdrcrctnobscplx41}
\end{equation}
with $I_t = \lim_{\varepsilon \rightarrow 0} \mathcal{W} (\partial_{P
(\varepsilon, t)}, \rho (I^{\partial}))$ satisfying the QME at scale $t$.
These cochain complexes at different scales are related by conjugations
(\ref{cjgtnofqtmobsvblcplxopnclsdcase3121}). To construct an effective
observable complex by homological perturbation, we need to start with an SDR
whose RHS can be perturbed to elements of (\ref{tqmwobdrcrctnobscplx41}).

\begin{lemma}
  For the the field space $(\mathcal{E}_L = \{ f \in \Omega^{\bullet}
  (\mathbb{R}_{\geqslant 0}) \otimes V| \iota^{\ast} f \in L \}, Q = \mathd +
  Q^{\partial})$, we have an SDR:
  \begin{equation}
    (L, Q^{\partial}) \begin{array}{c}
      \tau\\
      \rightleftharpoons\\
      \iota^{\ast}
    \end{array} (\mathcal{E}_L, Q), \varkappa
    \label{freefieldsdropenclsdcase42}
  \end{equation}
  where $\tau, \varkappa$ both have images in $\Omega^0 (\mathbb{R}_{\geqslant
  0}) \otimes V$: for $l \in L, f \in \mathcal{E}_L, x \in
  \mathbb{R}_{\geqslant 0}$,
  \[ (\tau (l)) (x) = l, \qquad (\varkappa (f)) (x) = - \int_0^x f. \]
\end{lemma}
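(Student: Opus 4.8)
The plan is to verify the five defining relations of an SDR listed after (\ref{sdrinidata}), namely $\iota^{\ast} \tau = 1$, $\tau \iota^{\ast} = 1 + Q \varkappa + \varkappa Q$, $\iota^{\ast} \varkappa = 0$, $\varkappa \tau = 0$, $\varkappa^2 = 0$, together with the assertions that $\tau, \iota^{\ast}$ are cochain maps. The whole computation becomes transparent once I decompose an arbitrary $f \in \mathcal{E}_L$ into form components $f = f_0 + f_1\, \mathd x$, where $f_0, f_1$ are $V$-valued smooth functions on $\mathbb{R}_{\geqslant 0}$ (the $0$-form and $1$-form parts), the boundary condition reading $f_0 (0) \in L$. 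In these terms the three maps act very simply: $\iota^{\ast} f = f_0 (0)$, $\tau (l)$ is the constant $0$-form with value $l$, and $\varkappa (f) = - \int_0^x f_1 (s)\, \mathd s$, again a $0$-form. In particular both $\tau$ and $\varkappa$ land in $\Omega^0 (\mathbb{R}_{\geqslant 0}) \otimes V$ with vanishing $1$-form part, and $\varkappa (f)$ vanishes at $x = 0$, so it indeed lies in $\mathcal{E}_L$.

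The four auxiliary relations are then immediate from these descriptions. Since $\tau (l)$ and $\varkappa (f)$ carry no $\mathd x$-component, applying $\varkappa$ to either yields $\varkappa \tau = 0$ and $\varkappa^2 = 0$; since $\varkappa (f)$ is a $0$-form vanishing at the origin, $\iota^{\ast} \varkappa = 0$; and $\iota^{\ast} \tau (l) = l$ gives $\iota^{\ast} \tau = 1$. For the cochain-map property I would use $Q^{\partial} (L) \subseteq L$ together with the fact that $\iota^{\ast}$ kills every positive-degree form: the $0$-form part of $Q f$ is $Q^{\partial} f_0$, so $\iota^{\ast} Q f = Q^{\partial} f_0 (0) = Q^{\partial} \iota^{\ast} f$, and $Q \tau (l) = Q^{\partial} l = \tau (Q^{\partial} l)$ because $\mathd$ annihilates constants.

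The substantive relation is the homotopy identity $\tau \iota^{\ast} = 1 + Q \varkappa + \varkappa Q$, which is nothing but the fundamental theorem of calculus dressed with Koszul signs. First I would record, with the tensor-product sign convention $Q (\omega \otimes v) = \mathd \omega \otimes v + (-1)^{|\omega|} \omega \otimes Q^{\partial} v$, that the $\mathd x$-coefficient of $Q f$ is $\partial_x f_0 - Q^{\partial} f_1$, while $Q \varkappa f = - f_1\, \mathd x - \int_0^x Q^{\partial} f_1\, \mathd s$. Integrating the former gives $\varkappa Q f = - (f_0 (x) - f_0 (0)) + \int_0^x Q^{\partial} f_1\, \mathd s$, so the two $Q^{\partial}$-integral terms cancel and $(Q \varkappa + \varkappa Q) f = - f_1\, \mathd x - f_0 (x) + f_0 (0)$. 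Adding $f$ leaves exactly the constant $0$-form $f_0 (0) = \tau \iota^{\ast} f$, as required.

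The only real care needed is the sign bookkeeping in this last step: I must track precisely how $Q^{\partial}$ commutes past $\mathd x$ so that the two $Q^{\partial}$-contributions cancel rather than reinforce, and confirm that the de Rham part collapses by $\int_0^x \partial_s f_0\, \mathd s = f_0 (x) - f_0 (0)$. Everything else is formal, the geometric content being simply that $\mathbb{R}_{\geqslant 0}$ deformation-retracts to its boundary point, with the retract landing in the constant $L$-valued forms.
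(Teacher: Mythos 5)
Your proof is correct and complete. The paper offers no proof of this lemma---it explicitly leaves the check ``to the reader as an exercise''---and your verification (decomposing $f = f_0 + f_1\,\mathd x$, checking the four annihilation/identity relations from the form-degree bookkeeping, and reducing the homotopy identity $\tau\iota^{\ast} = 1 + Q\varkappa + \varkappa Q$ to the fundamental theorem of calculus with the Koszul sign $(-1)^{|\omega|}$ governing the cancellation of the two $Q^{\partial}$-integral terms) is exactly the intended computation, including the point that $\varkappa(f)$ vanishes at $x=0$ and hence lands in $\mathcal{E}_L$.
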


The check is left to the reader as an exercise.

We denote the dual construction (Lemma \ref{dualcnstrctnsdrlm25}) of
(\ref{freefieldsdropenclsdcase42}) by
\[ (L^{\ast}, Q^{\partial}) \begin{array}{c}
     i\\
     \rightleftharpoons\\
     p
   \end{array} (\mathcal{E}_L^{\ast}, Q), K. \]
It further induces an SDR by symmetric tensor power construction:
\begin{equation}
  (\mathcal{O} (L) [[\hbar]], Q^{\partial}) \begin{array}{c}
    i\\
    \rightleftharpoons\\
    p
  \end{array} (\mathcal{O} (\mathcal{E}_L) [[\hbar]], Q), K^{\tmop{sym}},
  \label{sttpntsdrforourcs43}
\end{equation}
where $\mathcal{O} (L) \assign \widehat{\tmop{Sym}} (L^{\ast})$.

The homological degree of an element in $\mathcal{O} (\mathcal{E}_L)
[[\hbar]]$ is the sum of two parts, the first comes from the grading of
$\Omega^0 (\mathbb{R}_{\geqslant 0})$, and the second comes from the grading
of $V$. In the following we will call the first part the ``{\tmstrong{de Rham
degree}}''. Similarly, we can define the de Rham degree of an operator on
$\mathcal{O} (\mathcal{E}_L) [[\hbar]]$. Denote this degree by $| \alpha
|_{\tmop{dR}}$ for $\alpha$ being a functional or an operator. It is easy to
see that
\begin{itemizedot}
  \item elements in $\mathcal{O} (\mathcal{E}_L) [[\hbar]]$ have nonpositive
  de Rham degree;
  
  \item for $f \in \mathcal{O} (L) [[\hbar]]$, $| i (f) |_{\tmop{dR}} = 0$,
  elements in $p^{- 1} (f)$ also have de Rham degree $0$;
  
  \item $| Q^{\partial} |_{\tmop{dR}} = 0, | \mathd |_{\tmop{dR}} = 1, |
  K^{\tmop{sym}} |_{\tmop{dR}} = - 1$;
  
  \item $| \Delta_t |_{\tmop{dR}} = | \{ -, - \}_t |_{\tmop{dR}} = 1$;
  
  \item $| \partial_{P (\varepsilon, \Lambda)} |_{\tmop{dR}} = 0$;
  
  \item $| \rho (f) |_{\tmop{dR}} = - 1$ for $f \in \tmop{Sym}^n (V^{\ast})$
  with $n > 0$. ($\rho$ is the map in (\ref{akszactnfnctnlcstrctn314}).)
\end{itemizedot}
Now, pick a scale $t > 0$, we use
\[ \hbar \Delta_t + \delta^{\tmop{int}}_t \assign \hbar \Delta_t + \{ I_t, -
   \}_t \]
to perturb (\ref{sttpntsdrforourcs43}), whose RHS will be changed to
$(\mathcal{O} (\mathcal{E}_L) [[\hbar]], Q + \hbar \Delta_t + \{ I_t, -
\}_t)$. By analyzing the $\hbar$-grading and symmetric tensor power grading,
we can observe that $\sum_{n = 0}^{+ \infty} ((\hbar \Delta_t +
\delta^{\tmop{int}}_t) K^{\tmop{sym}})^n$ is well defined on $\mathcal{O}
(\mathcal{E}_L) [[\hbar]]$. So, $\hbar \Delta_t + \delta^{\tmop{int}}_t$ is a
small perturbation. (This is similar to the implication following
(\ref{itactnfctnlspcwell213}), although here $I_t$ can contain terms in
$\mathcal{O}_{\mathcal{P}}^2 (\mathcal{L}'_c)$.)

\begin{theorem}
  \label{mainthmtqmefthysdr}Let $\{ (\mathcal{O} (\mathcal{E}_L), Q, \Delta_t)
  |t \in \mathbb{R}_{> 0} \}$ and $\rho (I^{\partial}) \in
  \mathcal{O}_{\mathcal{P}} (\mathcal{E}_{L, c}) [[\hbar]]^+$ encode a TQM on
  $\mathbb{R}_{\geqslant 0}$ as in Definition \ref{tqmwobdrcdtn35}. At any
  scale $t > 0$, we have an SDR manifesting an effective theory for this TQM.
  Concretely, this SDR is the result of using $\hbar \Delta_t +
  \delta^{\tmop{int}}_t$ to perturb (\ref{sttpntsdrforourcs43}), written as
  follows:
  \begin{equation}
    (\mathcal{O} (L) [[\hbar]], Q^{\partial} + b_t^{\tmop{int}})
    \begin{array}{c}
      i_t^{\tmop{int}}\\
      \rightleftharpoons\\
      p_t
    \end{array} (\mathcal{O} (\mathcal{E}_L) [[\hbar]], Q + \hbar \Delta_t +
    \delta^{\tmop{int}}_t), K_t^{\tmop{int}} \label{ptbtnrsttqmopnclsd44}
  \end{equation}
  where
  \begin{eqnarray}
    b_t^{\tmop{int}} & = & p e^{\hbar (\Delta_t K^{\tmop{sym}})_2}
    \delta^{\tmop{int}}_t i.  \label{mnrsttrsfddffml46}\\
    p_t & = & p e^{\hbar (\Delta_t K^{\tmop{sym}})_2}, 
    \label{mainrstpjtnfml45}
  \end{eqnarray}
  Moreover, for any two different scales $\varepsilon, \Lambda > 0$,
  \begin{eqnarray*}
    p_{\varepsilon} & = & p_{\Lambda} \mathcal{W} (\partial_{P (\varepsilon,
    \Lambda)}, I_{\varepsilon}, -)\\
    b_{\varepsilon}^{\tmop{int}} & = & b_{\Lambda}^{\tmop{int}} .
  \end{eqnarray*}
  Namely, we obtain an effective observable complex $(\mathcal{O} (L)
  [[\hbar]], Q^{\partial} + b^{\tmop{int}})$ independent of the scale we pick
  to construct it.
\end{theorem}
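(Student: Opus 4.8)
The plan is to evaluate the perturbation of (\ref{sttpntsdrforourcs43}) by $\hbar \Delta_t + \delta^{\tmop{int}}_t$ in two stages via the associativity of homological perturbation (Lemma \ref{1stcmpstnhpllemma}): first turn on $\hbar \Delta_t$, then turn on $\delta^{\tmop{int}}_t = \{ I_t, - \}_t$. The simplification of the abstract formulae of Lemma \ref{hplhplhpl} down to (\ref{mainrstpjtnfml45}) and (\ref{mnrsttrsfddffml46}) is entirely controlled by de Rham degree. The one structural input that makes this boundary computation cleaner than the closed-manifold case of Proposition \ref{intactvthyclsdmfdeffcorigsdrrslt217} is that $I_t$ has \emph{strictly negative} de Rham degree: because $\rho$ lowers de Rham degree by one while $\partial_{P (\varepsilon, t)}$ preserves it, each connected graph with $v$ vertices contributing to $I_t = \lim_{\varepsilon \to 0} \mathcal{W} (\partial_{P (\varepsilon, t)}, \rho (I^{\partial}))$ lives in de Rham degree $- v$, whence $| I_t |_{\tmop{dR}} \leqslant - 1$. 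Since $p$ retains only de Rham degree $0$ and $(\Delta_t K^{\tmop{sym}})_2$ preserves de Rham degree, every interaction contribution to the projection is forced to drop out.

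For the first stage I would apply Proposition \ref{freethyeffcorigsdrrslt28} with $d = Q$ and $\Delta = \Delta_t$; this is legitimate since $\Delta_t Q (m_1 m_2) = - Q \Delta_t (m_1 m_2) = 0$ for linear $m_1, m_2$, so the hypothesis of Proposition \ref{freeqtmdgbveg} holds. It produces the free effective SDR with injection $i$, projection $p\, e^{\hbar (\Delta_t K^{\tmop{sym}})_2}$ and some homotopy $K_t^{\tmop{free}}$. Two observations feed the next stage. First, $\Delta_t i = 0$: the image of $i$ has de Rham degree $0$, whereas $\Delta_t$ raises de Rham degree by one and lands in nonpositive de Rham degree; hence the transferred free differential is exactly $Q^{\partial}$ rather than $Q^{\partial} + \hbar p \Delta_t i$. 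Second, $e^{\hbar (\Delta_t K^{\tmop{sym}})_2} i = i$, which follows from $K i = 0$ exactly as in the proof of Proposition \ref{freethyeffcorigsdrrslt28}, so that $p\, e^{\hbar (\Delta_t K^{\tmop{sym}})_2} i = 1$.

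For the second stage I would perturb the free effective SDR by $\delta^{\tmop{int}}_t$ and apply Proposition \ref{corcorhplpresvproj}: the projection is preserved once $p\, e^{\hbar (\Delta_t K^{\tmop{sym}})_2} \delta^{\tmop{int}}_t K_t^{\tmop{free}} = 0$. This is again a degree count, since $K_t^{\tmop{free}}$ lowers de Rham degree by one while $\delta^{\tmop{int}}_t = \{ I_t, - \}_t$ raises it by at most $| I_t |_{\tmop{dR}} + 1 \leqslant 0$, so the result has de Rham degree $\leqslant - 1$ and dies under $p$. This gives (\ref{mainrstpjtnfml45}), $p_t = p\, e^{\hbar (\Delta_t K^{\tmop{sym}})_2}$. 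The transferred differential then follows from the identity $b_1 = p_1 d_1 i$ of Lemma \ref{hplhplhpl}: taking $p_1 = p_t$ and $d_1 = Q + \hbar \Delta_t + \delta^{\tmop{int}}_t$ and using $Q i = i Q^{\partial}$, $p_t i = 1$ and $\Delta_t i = 0$, one finds $b_1 = Q^{\partial} + p\, e^{\hbar (\Delta_t K^{\tmop{sym}})_2} \delta^{\tmop{int}}_t i$, which is (\ref{mnrsttrsfddffml46}). I would stress that, unlike in Proposition \ref{intactvthyclsdmfdeffcorigsdrrslt217}, one must \emph{not} push the formal conjugation $U = e^{- I_t / \hbar}$ through Proposition \ref{conjuptbtnprojcomcase}: that operator is ill-defined on $\mathcal{O} (\mathcal{E}_L) [[\hbar]]$ and the degree cancellations it suggests are spurious, so the transfer has to be read off from Lemma \ref{hplhplhpl} directly.

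For scale independence I would exploit the genuine cochain isomorphism $\Phi := \mathcal{W} (\partial_{P (\varepsilon, \Lambda)}, I_{\varepsilon}, -)$ of (\ref{cjgtnofqtmobsvblcplxopnclsdcase3121}), which intertwines $Q + \hbar \Delta_{\varepsilon} + \{ I_{\varepsilon}, - \}_{\varepsilon}$ with $Q + \hbar \Delta_{\Lambda} + \{ I_{\Lambda}, - \}_{\Lambda}$. Since $\Phi$ is an honest operator (in contrast with $e^{- I_t / \hbar}$), Proposition \ref{conjuptbtnprojcomcase} applies verbatim: I would conjugate the scale-$\varepsilon$ effective SDR by $\Phi$ on its target, verify condition \ref{conjupropprojB} and compute $W = (p_{\varepsilon} \Phi^{- 1} i_{\varepsilon}^{\tmop{int}})^{- 1} = 1$ by the same de Rham vanishing (the summands of $\Phi$ carrying at least one interaction vertex strictly lower de Rham degree and are annihilated after composing with $p_{\varepsilon}$), and read off $p_{\Lambda} = p_{\varepsilon} \Phi^{- 1}$ together with $b_{\Lambda}^{\tmop{int}} = W (Q^{\partial} + b_{\varepsilon}^{\tmop{int}}) W^{- 1} - Q^{\partial} = b_{\varepsilon}^{\tmop{int}}$. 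Associativity (Lemma \ref{1stcmpstnhpllemma}) should then identify this $\Phi$-conjugated SDR with the one obtained by perturbing (\ref{sttpntsdrforourcs43}) directly at scale $\Lambda$, yielding $p_{\varepsilon} = p_{\Lambda} \mathcal{W} (\partial_{P (\varepsilon, \Lambda)}, I_{\varepsilon}, -)$ and $b_{\varepsilon}^{\tmop{int}} = b_{\Lambda}^{\tmop{int}}$. I expect this last identification to be the main obstacle, together with keeping the de Rham vanishing arguments rigorous inside the formal completions $\prod_n \tmop{Sym}^n (\mathcal{E}_L^{\ast}) ((\hbar))$ where the intermediate HRG expressions naturally live.
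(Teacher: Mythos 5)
Your fixed-scale computation (the first half of the theorem) is essentially the paper's own proof: split the perturbation as $\hbar \Delta_t$ followed by $\delta^{\tmop{int}}_t$ via Lemma \ref{1stcmpstnhpllemma}, apply Proposition \ref{freethyeffcorigsdrrslt28} with $\Delta_t i = 0$ for the free step, then use the de Rham degree count on $I_t$ to validate condition \ref{corhplpresvproj001A} of Proposition \ref{corcorhplpresvproj} and read off (\ref{mnrsttrsfddffml46}), (\ref{mainrstpjtnfml45}); your direct count on the intermediate homotopy is equivalent to the paper's check $p e^{\hbar (\Delta_t K^{\tmop{sym}})_2} \delta^{\tmop{int}}_t K^{\tmop{sym}} = 0$. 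For scale independence you genuinely diverge from the paper. The paper proves $p_{\Lambda} \mathcal{W} (\partial_{P (\varepsilon, \Lambda)}, I_{\varepsilon}, -) = p_{\varepsilon}$ by explicit computation, writing the composite as $p\, e^{\hbar ((\Delta_{\Lambda} K^{\tmop{sym}})_2 + \partial_{P (\varepsilon, \Lambda)})}$ and showing $(\Delta_{\Lambda} K^{\tmop{sym}})_2 + \partial_{P (\varepsilon, \Lambda)} = (\Delta_{\varepsilon} K^{\tmop{sym}})_2$ via $\Delta_{\Lambda} = \Delta_{\varepsilon} + [\partial_{P (\varepsilon, \Lambda)}, \mathd]$ together with the two kernel identities $\partial_{P (\varepsilon, \Lambda)} i = 0$ and $\partial_{P (\varepsilon, \Lambda)} (f_1 K f_2) = 0$; it then deduces $b^{\tmop{int}}_{\varepsilon} = b^{\tmop{int}}_{\Lambda}$ from $p_{\varepsilon} i_{\varepsilon}^{\tmop{int}} = 1$ and the intertwining property of $\mathcal{W}$. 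Your alternative --- conjugate the scale-$\varepsilon$ SDR by the honest isomorphism $\Phi$ of (\ref{cjgtnofqtmobsvblcplxopnclsdcase3121}), apply Proposition \ref{conjuptbtnprojcomcase}, and identify the output with the direct scale-$\Lambda$ SDR by Lemma \ref{1stcmpstnhpllemma} --- is structurally sound (the identification step you worry about is in fact immediate, since the conjugation-perturbation \emph{is} the homological perturbation by $d_{\Lambda} - d_{\varepsilon}$), and it buys something: it never needs the second kernel identity $\partial_{P (\varepsilon, \Lambda)} (f_1 K f_2) = 0$, with associativity absorbing all homotopy-related terms.

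There is, however, one concrete gap: your justification of $W = (p_{\varepsilon} \Phi^{- 1} i_{\varepsilon}^{\tmop{int}})^{- 1} = 1$. The de Rham count kills only the summands of $\Phi^{- 1} = \mathcal{W} (\partial_{P (\Lambda, \varepsilon)}, I_{\Lambda}, -)$ containing at least one interaction vertex; the vertex-free summand $e^{\hbar \partial_{P (\Lambda, \varepsilon)}}$ has de Rham degree $0$ and is \emph{not} annihilated after composing with $p_{\varepsilon}$. What your argument actually yields is $p_{\varepsilon} \Phi^{- 1} i_{\varepsilon}^{\tmop{int}} = p\, e^{\hbar (\Delta_{\varepsilon} K^{\tmop{sym}})_2} e^{\hbar \partial_{P (\Lambda, \varepsilon)}} i$, and to conclude this equals $1$ you need the extra geometric input $\partial_{P (\varepsilon, \Lambda)} i = 0$: since $P (\varepsilon, \Lambda) \in \tmop{Sym}^2 (\mathcal{E}_L)$, its $L'$-valued legs vanish at the boundary, so functionals pulled back from $L$ via $\iota^{\ast}$ cannot be contracted with it. This is exactly one of the identities the paper verifies in its own proof (``it is easy to verify that $\partial_{P (\varepsilon, \Lambda)} i = 0$''). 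Without it, Proposition \ref{conjuptbtnprojcomcase} only gives $p_{\Lambda} = W p_{\varepsilon} \Phi^{- 1}$ and $b_{\Lambda}^{\tmop{int}} = W (Q^{\partial} + b_{\varepsilon}^{\tmop{int}}) W^{- 1} - Q^{\partial}$ for an undetermined invertible $W$, which is strictly weaker than the claimed equalities (condition \ref{conjupropprojB} itself also requires $W$ to exist, i.e.\ invertibility of $p_{\varepsilon} \Phi^{- 1} i_{\varepsilon}^{\tmop{int}}$, which your count alone does not establish). Once $\partial_{P (\varepsilon, \Lambda)} i = 0$ is supplied, your route closes; the remaining condition $p_{\varepsilon} \Phi^{- 1} K_{\varepsilon}^{\tmop{int}} = 0$ does follow from your degree count, since $K_{\varepsilon}^{\tmop{int}}$ strictly lowers de Rham degree and $\Phi^{- 1}$ does not raise it.
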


\begin{proof}
  By associativity of perturbation theory (Lemma \ref{1stcmpstnhpllemma}), we
  do the calculation in two steps. First, we use only $\hbar \Delta_t$ to
  perturb (\ref{sttpntsdrforourcs43}). By Proposition
  \ref{freethyeffcorigsdrrslt28} the result can be written as
  \begin{equation}
    (\mathcal{O} (L) [[\hbar]], Q^{\partial}) \begin{array}{c}
      i\\
      \rightleftharpoons\\
      p e^{\hbar (\Delta_t K^{\tmop{sym}})_2}
    \end{array} (\mathcal{O} (\mathcal{E}_L) [[\hbar]], Q + \hbar \Delta_t),
    K_{\hbar} . \label{freetqmopnclsdsdrrst45}
  \end{equation}
  Note that the differential on $\mathcal{O} (L) [[\hbar]]$ is not perturbed,
  because $\Delta_t i = 0$ here. Then, we use $\delta^{\tmop{int}}_t$ to
  perturb (\ref{freetqmopnclsdsdrrst45}). By definition,
  \[ \delta^{\tmop{int}}_t = \{ I_t, - \}_t = \lim_{\varepsilon \rightarrow 0}
     \{ \mathcal{W} (\partial_{P (\varepsilon, t)}, \rho (I^{\partial})), -
     \}_t . \]
  Since
  \[ | \rho (I^{\partial}) |_{\tmop{dR}} = - 1, \quad | \partial_{P
     (\varepsilon, t)} |_{\tmop{dR}} = 0, \]
  it is direct to conclude from (\ref{fmldfntof1sthrgopnclsedcase312}) that,
  terms in $\mathcal{W} (\partial_{P (\varepsilon, t)}, \rho (I^{\partial}))$
  will have de Rham degree at most $- 1$. Besides, $| \{ -, - \}_t
  |_{\tmop{dR}} = 1$, so given $f \in \mathcal{O} (\mathcal{E}_L) [[\hbar]]$
  with specific de Rham degree $| f |_{\tmop{dR}}$, terms in
  $\delta^{\tmop{int}}_t (f)$ will have de Rham degree at most $| f
  |_{\tmop{dR}}$. By
  \[ | K^{\tmop{sym}} |_{\tmop{dR}} = - 1, \quad | \Delta_t |_{\tmop{dR}} = 1,
  \]
  we have $| (\Delta_t K^{\tmop{sym}})_2 |_{\tmop{dR}} = 0$. Recall that $p
  (f) = 0$ for $f \in \mathcal{O} (\mathcal{E}_L) [[\hbar]]$ with $| f
  |_{\tmop{dR}} < 0$, so
  \[ p e^{\hbar (\Delta_t K^{\tmop{sym}})_2} \delta^{\tmop{int}}_t
     K^{\tmop{sym}} = 0. \]
  This implies $p e^{\hbar (\Delta_t K^{\tmop{sym}})_2} \delta^{\tmop{int}}_t
  K_{\hbar} = 0$. Namely, perturbation $\delta^{\tmop{int}}_t$ to
  (\ref{freetqmopnclsdsdrrst45}) validates the statement
  \ref{corhplpresvproj001A} in Proposition \ref{corcorhplpresvproj}, hence
  gives rise to exactly the formulae (\ref{mnrsttrsfddffml46}) and
  (\ref{mainrstpjtnfml45}).
  
  By the same counting as above, given $f \in \mathcal{O} (\mathcal{E}_L)
  [[\hbar]]$ with specific $| f |_{\tmop{dR}}$,
  (\ref{fmldfntnof2ndhrgoprtopnclsdcs313}) implies
  \[ \mathcal{W} (\partial_{P (\varepsilon, \Lambda)}, I_{\varepsilon}, f) =
     e^{\hbar \partial_{P (\varepsilon, \Lambda)}} f + \ldots, \]
  where terms in ``{\textdots}'' have de Rham degree less than $| f
  |_{\tmop{dR}}$. So,
  \begin{eqnarray*}
    p_{\Lambda} \mathcal{W} (\partial_{P (\varepsilon, \Lambda)},
    I_{\varepsilon}, -) & = & p e^{\hbar (\Delta_{\Lambda} K^{\tmop{sym}})_2}
    e^{\hbar \partial_{P (\varepsilon, \Lambda)}}\\
    & = & p e^{\hbar ((\Delta_{\Lambda} K^{\tmop{sym}})_2 + \partial_{P
    (\varepsilon, \Lambda)})} .
  \end{eqnarray*}
  Note that $\Delta_{\Lambda} = \Delta_{\varepsilon} + [\partial_{P
  (\varepsilon, \Lambda)}, Q] = \Delta_{\varepsilon} + [\partial_{P
  (\varepsilon, \Lambda)}, \mathd]$, we have
  \[ (\Delta_{\Lambda} K^{\tmop{sym}})_2 + \partial_{P (\varepsilon, \Lambda)}
     = (\Delta_{\varepsilon} K^{\tmop{sym}})_2 + ([\partial_{P (\varepsilon,
     \Lambda)}, \mathd] K^{\tmop{sym}})_2 + \partial_{P (\varepsilon,
     \Lambda)} . \]
  It is easy to verify that $\partial_{P (\varepsilon, \Lambda)} i = 0$, and
  for $f_1, f_2 \in \mathcal{E}_L^{\ast}$, $\partial_{P (\varepsilon,
  \Lambda)} (f_1 K f_2) = 0$ (by de Rham degree reason). So
  \begin{eqnarray*}
    &  & (([\partial_{P (\varepsilon, \Lambda)}, \mathd] K^{\tmop{sym}})_2 +
    \partial_{P (\varepsilon, \Lambda)}) (f_1 f_2)\\
    & = & (\partial_{P (\varepsilon, \Lambda)} \mathd K^{\tmop{sym}} +
    \partial_{P (\varepsilon, \Lambda)}) (f_1 f_2)\\
    & = & \partial_{P (\varepsilon, \Lambda)} (i p - K^{\tmop{sym}} \mathd)
    (f_1 f_2)\\
    & = & 0,
  \end{eqnarray*}
  which means $([\partial_{P (\varepsilon, \Lambda)}, \mathd]
  K^{\tmop{sym}})_2 + \partial_{P (\varepsilon, \Lambda)} = 0$ because this is
  a $2$-to-$0$ operator. Hence we have proved $p_{\Lambda} \mathcal{W}
  (\partial_{P (\varepsilon, \Lambda)}, I_{\varepsilon}, -) =
  p_{\varepsilon}$. As a consequence,
  \begin{eqnarray*}
    Q^{\partial} + b_{\varepsilon}^{\tmop{int}} & = & p_{\varepsilon} (Q +
    \hbar \Delta_{\varepsilon} + \delta^{\tmop{int}}_{\varepsilon})
    i_{\varepsilon}^{\tmop{int}}\\
    & = & p_{\Lambda} \mathcal{W} (\partial_{P (\varepsilon, \Lambda)},
    I_{\varepsilon}, -) (Q + \hbar \Delta_{\varepsilon} +
    \delta^{\tmop{int}}_{\varepsilon}) i_{\varepsilon}^{\tmop{int}}\\
    & = & p_{\Lambda} (Q + \hbar \Delta_{\Lambda} +
    \delta^{\tmop{int}}_{\Lambda}) \mathcal{W} (\partial_{P (\varepsilon,
    \Lambda)}, I_{\varepsilon}, -) i_{\varepsilon}^{\tmop{int}}\\
    & = & (Q^{\partial} + b_{\Lambda}^{\tmop{int}}) p_{\Lambda} \mathcal{W}
    (\partial_{P (\varepsilon, \Lambda)}, I_{\varepsilon}, -)
    i_{\varepsilon}^{\tmop{int}}\\
    & = & (Q^{\partial} + b_{\Lambda}^{\tmop{int}}) p_{\varepsilon}
    i_{\varepsilon}^{\tmop{int}}\\
    & = & Q^{\partial} + b_{\Lambda}^{\tmop{int}} .
  \end{eqnarray*}
  So we have finished.
\end{proof}

\begin{remark}
  In the proof we have seen that, if we had set $Q^{\partial} = 0$ in the
  beginning, before turning on the interaction we would obtain a complex (the
  LHS of (\ref{freetqmopnclsdsdrrst45})) with $0$ differential. Then, the
  interaction could give rise to a nonzero $b^{\tmop{int}}$ on it (see Example
  \ref{egeftccltn46}). This is different from the case in Section
  \ref{eftintqftclsdmftsec24}. There, the interaction induces a formal
  conjugation on the effective observable complex, so will preserve a zero
  differential.
\end{remark}

We can actually simplify formula (\ref{mnrsttrsfddffml46}) a little. Formally
we have
\[ \delta^{\tmop{int}}_t = \{ I_t, - \}_t = \hbar e^{- I_t / \hbar} \{ e^{I_t
   / \hbar}, - \}_t = \hbar e^{- I_t / \hbar} \lim_{\varepsilon \rightarrow 0}
   \{ e^{\hbar \partial_{P (\varepsilon, t)}} e^{\rho (I^{\partial}) / \hbar},
   - \}_t . \]
So, by de Rham degree argument,
\begin{equation}
  b^{\tmop{int}} = p e^{\hbar (\Delta_t K^{\tmop{sym}})_2}
  \delta^{\tmop{int}}_t i = \lim_{\varepsilon \rightarrow 0} p e^{\hbar
  (\Delta_t K^{\tmop{sym}})_2} \{ e^{\hbar \partial_{P (\varepsilon, t)}} \rho
  (I^{\partial}), - \}_t i. \label{splfdtrsfddiffeffthy48}
\end{equation}
For later convenience, we rewrite the BV kernel $K_t$ (defined in
(\ref{bvknldfntn311})) as
\[ K_t (x, y) = (\Eta^{10}_t (x, y) \tmop{dx} + \Eta^{01}_t (x, y) \tmop{dy})
   \otimes K^{\partial}_+ + \sigma (\Eta^{10}_t (x, y) \tmop{dx} + \Eta^{01}_t
   (x, y) \tmop{dy}) \otimes \sigma K^{\partial}_+ \]
with
\begin{eqnarray*}
  \Eta^{10}_t (x, y) & = & - \frac{1}{2} H_N (t, x, y) - \frac{1}{4}
  \partial_x \left( \left( \text{d}^{\tmop{GF}} \otimes 1 + 1 \otimes
  \mathd^{\tmop{GF}} \right) \int_0^t \tmop{ds} (\widetilde{H_s} - H_s)
  \right)\\
  \Eta^{01}_t (x, y) & = & \frac{1}{2} H_D (t, x, y) - \frac{1}{4} \partial_y
  \left( \left( \text{d}^{\tmop{GF}} \otimes 1 + 1 \otimes \mathd^{\tmop{GF}}
  \right) \int_0^t \tmop{ds} (\widetilde{H_s} - H_s) \right) .
\end{eqnarray*}
Now we will work out the explicit formula of $b^{\tmop{int}}$ for the examples
in the last section.

\begin{example}
  \label{egeftccltn44}For the theory given in Example \ref{eg36thycnstctn}, we
  have
  \[ e^{\hbar \partial_{P (\varepsilon, t)}} \rho (I^{\partial}) = \rho
     (I^{\partial}) = - \int_{\mathbb{R}_{\geqslant 0}} \omega^{\partial}
     (Q^{\tmop{rel}} -, -), \]
  so $b^{\tmop{int}} = p e^{\hbar (\Delta_t K^{\tmop{sym}})_2} \{ \rho
  (I^{\partial}), - \}_t i$.
  
  By construction (recall $\mathcal{E}_L =\mathcal{L} \oplus \mathcal{L}'$),
  \[ p^{- 1} (\mathcal{O} (L)) \subset \widehat{\tmop{Sym}}
     (\mathcal{L}^{\ast}), \qquad i (\mathcal{O} (L)) \subset
     \widehat{\tmop{Sym}} (\mathcal{L}^{\ast}) . \]
  Just like $\partial_{P (\varepsilon, t)}, \Delta_t$ and $\{ -, - \}_t$, the
  operator $(\Delta_t K^{\tmop{sym}})_2$ also pairs a linear functional factor
  depending on $\mathcal{L}$ with a linear functional factor depending on
  $\mathcal{L}'$. So we have
  \[ b^{\tmop{int}} = p e^{\hbar (\Delta_t K^{\tmop{sym}})_2} \{ \rho
     (I^{\partial}), - \}_t i = \hbar p (\Delta_t K^{\tmop{sym}})_2 \{ \rho
     (I^{\partial}), - \}_t i = \hbar p [(\Delta_t K^{\tmop{sym}})_2, \{ \rho
     (I^{\partial}), - \}_t ] i. \]
  By Proposition \ref{2t0oprtcmtderto2t0}, this is a $2$-to-$0$ operator,
  decided by
  \begin{eqnarray*}
    b^{\tmop{int}} (f_1 f_2) & = & \hbar \Delta_t ((K \{ \rho (I^{\partial}),
    i f_1 \}_t) (i f_2) + (i f_1) (K \{ \rho (I^{\partial}), i f_2 \}_t))\\
    & = & (- 1)^{| f_1 | + | f_2 |} 4 \hbar \int_{\mathbb{R}_{\geqslant 0}}
    \tmop{dx} \Eta^{10}_t (x, 0) \int_0^x \tmop{dy} \Eta^{10}_t (y, 0) f_1 f_2
    ((Q^{\tmop{rel}} \otimes 1) K^{\partial}_+)\\
    & = & (- 1)^{| f_1 | + | f_2 |} 2 \hbar \left( \int_0^{+ \infty}
    \tmop{dx} \Eta^{10}_t (x, 0) \right)^2 f_1 f_2 ((Q^{\tmop{rel}} \otimes 1)
    K^{\partial}_+)\\
    & = & (- 1)^{| f_1 | + | f_2 |} \frac{\hbar}{2} \left( \int_0^{+ \infty}
    \tmop{dx} H_N (t, x, 0) \right)^2 f_1 f_2 ((Q^{\tmop{rel}} \otimes 1)
    K^{\partial}_+)\\
    & = & \frac{(- 1)^{| f_1 | + | f_2 |} \hbar}{2} f_1 f_2 ((Q^{\tmop{rel}}
    \otimes 1) K^{\partial}_+),
  \end{eqnarray*}
  with $f_1, f_2 \in L^{\ast}$.
  
  So, we obtain an effective theory described by a differential BV algebra
  $(\mathcal{O} (L), Q^{\partial}, b^{\tmop{int}} / \hbar) .$ The effective BV
  operator $b^{\tmop{int}} / \hbar$ can be degenerate (as a pairing on
  $L^{\ast}$) for certain choices of $Q^{\tmop{rel}}$. This example reproduces
  the simplest case (i.e., the boundary manifold is a point here) of the
  second relation in {\cite[Theorem 3.4.3]{rabinovich2021factorization}}. It
  has been observed that degenerate classical field theories can arise from
  classical field theories on manifold with boundary
  {\cite{butson2016degenerate}}. We hope our homotopy transfer method can
  extend this study to quantum level in the future.
\end{example}

\begin{example}
  \label{egeftccltn45}For BF theory with B boundary condition (Example
  \ref{eg37thycnstctn}), by the unimodular condition $f^{c b}_c = 0$ and
  $K^{\partial}_+ = - \eta t^a \otimes \epsilon t_a, K^{\partial}_- = -
  \epsilon t_a \otimes \eta t^a$, it is direct to see
  \begin{equation}
    \partial_{P (\varepsilon, t)} \rho (I^{\partial}) = \frac{1}{2}
    \partial_{P (\varepsilon, t)} \int_{\mathbb{R} \geqslant 0} f^{a b}_c B^c
    \wedge A_a \wedge A_b = 0. \label{actnvnshppgtbfbcdt}
  \end{equation}
  So $e^{\hbar \partial_{P (\varepsilon, t)}} \rho (I^{\partial}) = \rho
  (I^{\partial})$. Using the same argument in Example \ref{egeftccltn44}, we
  have
  \[ b^{\tmop{int}} = p e^{\hbar (\Delta_t K^{\tmop{sym}})_2} \{ \rho
     (I^{\partial}), - \}_t i = \hbar p (\Delta_t K^{\tmop{sym}})_2 \{ \rho
     (I^{\partial}), - \}_t i. \]
  By the same reason for (\ref{actnvnshppgtbfbcdt}), $(\Delta_t
  K^{\tmop{sym}})_2 \{ \rho (I^{\partial}), i (f) \}_t = 0$ for $f \in
  L^{\ast}$. We conclude that
  \begin{itemizedot}
    \item for $n \geqslant 2$, $\forall f_1 f_2 \cdots f_n \in \tmop{Sym}^n
    (L^{\ast})$,
    \[ b^{\tmop{int}} (f_1 f_2 \cdots f_n) = \sum_{i < j} (\pm)_{\tmop{Kos}}
       b^{\tmop{int}} (f_i f_j) f_1 \ldots \widehat{f_i} \ldots \widehat{f_j}
       \ldots f_n . \]
    \item $b^{\tmop{int}} (\tmop{Sym}^{\leqslant 1} (L^{\ast}) [[\hbar]]) =
    0$, and $b^{\tmop{int}} (\tmop{Sym}^2 (L^{\ast})) \subset \hbar L^{\ast}$.
    For $f_1, f_2 \in L^{\ast}$,
    \begin{eqnarray*}
      b^{\tmop{int}} (f_1 f_2) & = & 4 \hbar f^{a b}_c B^c
      \int_{\mathbb{R}_{\geqslant 0}} \tmop{dx} \Eta^{10}_t (x, 0) \int_0^x
      \tmop{dy} \Eta^{10}_t (y, 0) \partial_{B^a} \partial_{B^b} (f_1 f_2)\\
      & = & \frac{\hbar}{2} f^{a b}_c B^c \partial_{B^a} \partial_{B^b} (f_1
      f_2)
    \end{eqnarray*}
    where $\partial_{B^a}$ is the derivation on $\mathcal{O} (L)$ decided by
    $\partial_{B^a} (B^b) = \delta_a^b$.
  \end{itemizedot}
  The isomorphism $L^{\ast} \simeq \mathfrak{g} [1]$ identifies $(\mathcal{O}
  (L) [[\hbar]], b^{\tmop{int}})$ with $(\widehat{\tmop{Sym}} (\mathfrak{g}
  [1]) [[\hbar]], \hbar d_{[-, -]})$, where $d_{[-, -]}$ is the differential
  of the Chevalley-Eilenberg complex
  \[ C_{\bullet} (\mathfrak{g}) \assign (\widehat{\tmop{Sym}} (\mathfrak{g}
     [1]), d_{[-, -]}) . \]
  So, we have refined the first conclusion in {\cite[Theorem
  5.0.2]{rabinovich2021factorization}} by preserving the $\hbar$-grading
  structure.
\end{example}

\begin{example}
  \label{egeftccltn46}For the theory given in Example \ref{eg38thycnstctn},
  using the same argument in Example \ref{egeftccltn44}, we have
  \[ b^{\tmop{int}} = \lim_{\varepsilon \rightarrow 0} p e^{\hbar (\Delta_t
     K^{\tmop{sym}})_2} \{ e^{\hbar \partial_{P (\varepsilon, t)}} \rho
     (I^{\partial}), - \}_t i = \frac{\hbar^2}{2} p ((\Delta_t
     K^{\tmop{sym}})_2)^2 \{ \rho (I^{\partial}), - \}_t i \]
  with $\rho (I^{\partial}) = - \int_{\mathbb{R}_{\geqslant 0}} p_1 \wedge p_1
  \wedge p_2$. This is a ``$3$-to-$0$ operator'' on $\mathcal{O} (L)
  [[\hbar]]$, in the sense that
  \begin{itemizedot}
    \item $b^{\tmop{int}} (\tmop{Sym}^{\leqslant 2} (L^{\ast}) [[\hbar]]) =
    0$, and $b^{\tmop{int}} (\tmop{Sym}^3 (L^{\ast})) \subset \mathbb{R}
    \hbar^2$.
    
    \item For $n \geqslant 3$, $\forall f_1 f_2 \cdots f_n \in \tmop{Sym}^n
    (L^{\ast})$,
    \[ b^{\tmop{int}} (f_1 f_2 \cdots f_n) = \sum_{i < j < k}
       (\pm)_{\tmop{Kos}} b^{\tmop{int}} (f_i f_j f_k) f_1 \ldots
       \widehat{f_i} \ldots \widehat{f_j} \ldots \widehat{f_k} \ldots f_n . \]
  \end{itemizedot}
  And for $f_1, f_2, f_3 \in L^{\ast}$,
  \begin{eqnarray*}
    b^{\tmop{int}} (f_1 f_2 f_3) & = & 24 \hbar^2 \int_{\mathbb{R}_{\geqslant
    0}} \tmop{dx} \Eta^{10}_t (x, 0) \int_0^x \tmop{dy} \Eta^{10}_t (y, 0)
    \int_0^x \tmop{dz} \Eta^{10}_t (z, 0) \partial_{q_1} \partial_{q_1}
    \partial_{q_2} (f_1 f_2 f_3)\\
    & = & \hbar^2 \partial_{q_1} \partial_{q_1} \partial_{q_2} (f_1 f_2 f_3)
  \end{eqnarray*}
  where $\partial_{q_i}$ is the derivation on $\mathcal{O} (L)$ decided by
  $\partial_{q_i} (q_j) = \delta_{i j}$ for $i, j \in \{ 1, 2 \} .$ Obviously,
  $b^{\tmop{int}} / \hbar^2$ is not a BV operator on $\mathcal{O} (L)$.
\end{example}

\subsection{A quasi-example: $2$D BF
theory}\label{chtngeg2dbfthyabcdtsctn42haha}

In {\cite{rabinovich2021factorization}}, field theory topological normal to
the boundary can be defined on a higher-dimensional spacetime $X$, inducing
QFT on $\mathbb{R}_{\geqslant 0} \times \partial X$. We can ask whether
Theorem \ref{mainthmtqmefthysdr} can be extended to the case when $\partial X$
is not a point. Now, to perform renormalization, the propagator will contain
``$\mathbb{R}_{\geqslant 0}$ direction'' terms and ``$\partial X$ direction''
terms. This fact unfortunately invalidates the de Rham degree argument in the
proof of Theorem \ref{mainthmtqmefthysdr}. So, in the following we do not
define the theory rigorously, but instead, make a trial regarding the theory
on $\mathbb{R}_{\geqslant 0} \times \partial X$ as a TQM on
$\mathbb{R}_{\geqslant 0}$ (with singularity in the data). In this way we will
study BF theory on $\mathbb{R}_{\geqslant 0} \times \mathbb{R}$, and see
whether we can obtain some meaningful ``effective theory'' by applying Theorem
\ref{mainthmtqmefthysdr} anyway.

\subsubsection*{B boundary condition}

Let $\mathfrak{g}$ be the unimodular Lie algebra mentioned in Example
\ref{eg37thycnstctn}. For BF theory on $\mathbb{R}_{\geqslant 0} \times
\mathbb{R}$ with B boundary condition, we define
\[ L \assign \Omega^{\bullet} (\mathbb{R}) \otimes \mathfrak{g}^{\ast}, \qquad
   L' \assign \Omega^{\bullet} (\mathbb{R}) \otimes \mathfrak{g} [1], \qquad V
   \assign L \oplus L' . \]
Let $V_c$ denote the set of compactly supported forms in $V$. There is a
degree $0$ nondegenerate antisymmetric pairing $\omega^{\partial}$ on $V_c$,
decided by
\[ \omega^{\partial} (\varphi_1 \otimes t_a, \varphi_2 \otimes \eta t^b)
   \assign \delta_a^b \int_{\mathbb{R}} \varphi_1 \wedge \varphi_2 \]
with $\varphi_1, \varphi_2 \in \Omega^{\bullet} (\mathbb{R})$ having compact
supports and $\eta$ being formal variable of degree $- 1$. Let $Q^{\partial}$
be the de Rham operator on $\Omega^{\bullet} (\mathbb{R})$, then,
$Q^{\partial} \omega^{\partial} = 0$. Now,
\[ \mathcal{E}_L = \{ f \in \Omega^{\bullet} (\mathbb{R}_{\geqslant 0})
   \otimes V| \iota^{\ast} f \in L \}, \]
with $\iota^{\ast}$ induced by $\iota : \mathbb{R} \hookrightarrow
\mathbb{R}_{\geqslant 0} \times \mathbb{R}$. $\mathcal{O} (\mathcal{E}_L)$
consists of functionals compactly supported on $\mathbb{R}_{\geqslant 0}
\times \mathbb{R}$. We can write
\[ K^{\partial}_+ = \delta (x' - y') (\tmop{dy}' - \tmop{dx}') \otimes \eta
   t^a \otimes t_a, \qquad K^{\partial}_- = - \sigma K^{\partial}_+ \]
where $(x', y')$ are coordinates on $\mathbb{R} \times \mathbb{R}$. Then, we
formally define $K_t$ and $P (\varepsilon, \Lambda)$ as in
(\ref{bvknldfntn311}) and (\ref{ppgtdfntn31212}). Let $\mathd$ still denote
the de Rham operator on $\Omega^{\bullet} (\mathbb{R}_{\geqslant 0})$. The
data
\[ (\mathcal{O} (\mathcal{E}_L), Q = \mathd + Q^{\partial}, \Delta_t =
   \partial_{K_t}) \]
is ill-defined, but if we restrict $\mathcal{O} (\mathcal{E}_L)$ to its
subspace of smooth distributions, it will become a differential BV algebra.
Let $\{ \Beta^a \}_{a = 1}^{\ell}$ be the basis of
$(\mathfrak{g}^{\ast})^{\ast}$ dual to $\{ t_a \}_{a = 1}^{\ell}$, $\{ A_a
\}_{a = 1}^{\ell}$ be the basis of $(\mathfrak{g} [1])^{\ast}$ such that $A_a
(\eta t^b) = \delta_a^b$, and $I^{\partial} \assign \frac{1}{2}
\int_{\mathbb{R}} f^{a b}_c \Beta^c \wedge A_a \wedge A_b$. Then we pretend
that
\[ \rho (I^{\partial}) = \frac{1}{2} \int_{\mathbb{R}_{\geqslant 0}}
   \int_{\mathbb{R}} f^{a b}_c \Beta^c \wedge A_a \wedge A_b, \]
together with $(\mathcal{O} (\mathcal{E}_L), Q, \Delta_t)$, defines an
interactive theory in the sense of Definition \ref{tqmwobdrcdtn35}. The proof
of Theorem \ref{mainthmtqmefthysdr} and arguments in Example
\ref{egeftccltn45} formally hold, allowing us to tentatively write down an
effective observable complex $(\mathcal{O} (L) [[\hbar]] =\mathcal{O}
(\Omega^{\bullet} (\mathbb{R}) \otimes \mathfrak{g}^{\ast}) [[\hbar]],
Q^{\partial} + b^{\tmop{int}})$, where
\begin{itemizedot}
  \item for $n \geqslant 2$, $\forall f_1 f_2 \cdots f_n \in \tmop{Sym}^n
  (L^{\ast})$,
  \[ b^{\tmop{int}} (f_1 f_2 \cdots f_n) = \sum_{i < j} (\pm)_{\tmop{Kos}}
     b^{\tmop{int}} (f_i f_j) f_1 \ldots \widehat{f_i} \ldots \widehat{f_j}
     \ldots f_n, \]
  \item $b^{\tmop{int}} (\tmop{Sym}^{\leqslant 1} (L^{\ast}) [[\hbar]]) = 0$,
  and $b^{\tmop{int}} (\tmop{Sym}^2 (L^{\ast})) \subset \hbar L^{\ast}$. For
  $f_1, f_2 \in L^{\ast}$, suppose
  \[ f_j = \int_{\mathbb{R}} f_{j a} \wedge \Beta^a \quad \text{for } j = 1, 2
  \]
  with each $\int_{\mathbb{R}} f_{j a} \wedge (-)$ representing a functional
  on $\Omega^{\bullet} (\mathbb{R})$, then
  \[ b^{\tmop{int}} (f_1 f_2) = \hbar (- 1)^{| f_{1 a} | + 1} f^{a b}_c
     \int_{\mathbb{R}} f_{1 a} \wedge f_{2 b} \wedge \Beta^c . \]
\end{itemizedot}
This is still ill-defined, because we cannot multiply two distributions.
Recall that we have an embedding
\begin{equation}
  (\Omega^{\bullet}_c (\mathbb{R}) [1], - \mathd_{\mathbb{R}}) \mapsto
  ((\Omega^{\bullet} (\mathbb{R}))^{\ast}, Q^{\partial}) \qquad \eta f \mapsto
  \int_{\mathbb{R}} f \wedge (-), \label{atyhbttlmmsctn411}
\end{equation}
with $(\Omega^{\bullet}_c (\mathbb{R}), \mathd_{\mathbb{R}})$ being the
compactly supported de Rham complex on $\mathbb{R}$ and $\eta$ being a degree
$- 1$ formal variable. By Atiyah-Bott lemma, this is a continuous
quasi-isomorphism (we refer to {\cite[Appendix D]{costello_gwilliam_2016}} for
an explanation). (\ref{atyhbttlmmsctn411}) and the identification
$(\mathfrak{g}^{\ast})^{\ast} \simeq \mathfrak{g}$ induce a quasi-isomorphism
\[ (\widehat{\tmop{Sym}} (\Omega^{\bullet}_c (\mathbb{R}) [1] \otimes
   \mathfrak{g}), - \mathd_{\mathbb{R}}) \hookrightarrow (\mathcal{O} (L),
   Q^{\partial}) . \]
Then, $- \mathd_{\mathbb{R}} + b^{\tmop{int}}$ is a well-defined differential
on $\widehat{\tmop{Sym}} (\Omega^{\bullet}_c (\mathbb{R}) [1] \otimes
\mathfrak{g}) [[\hbar]]$. For $f_1, f_2 \in \Omega^{\bullet}_c (\mathbb{R})
\otimes \mathfrak{g}$,
\[ b^{\tmop{int}} ((\eta f_1) (\eta f_2)) = \hbar (- 1)^{| f_1 | + 1} \eta
   [f_1, f_2], \]
where we have recognized $\Omega^{\bullet}_c (\mathbb{R}) \otimes
\mathfrak{g}$ to be a dg Lie algebra, with bracket still denoted by $[-, -]$.
In this way, we can identify
\begin{equation}
  (\widehat{\tmop{Sym}} (\Omega^{\bullet}_c (\mathbb{R}) [1] \otimes
  \mathfrak{g}) [[\hbar]], - \mathd_{\mathbb{R}} + b^{\tmop{int}})
  \label{fakecalcuresult2dbfcdtnb411}
\end{equation}
with the Chevalley-Eilenberg complex $C_{\bullet} (\Omega^{\bullet}_c
(\mathbb{R}) \otimes \mathfrak{g})$ (after taking $\hbar = 1$).

\begin{remark}
  If we take the boundary manifold to be an open subset $U \subset
  \mathbb{R}$, the above quasi-calculation will give rise to $C_{\bullet}
  (\Omega^{\bullet}_c (U) \otimes \mathfrak{g})$. This assignment actually
  leads to the enveloping factorization algebra $\mathbb{U}
  (\Omega^{\bullet}_{\mathbb{R}} \otimes \mathfrak{g})$ on $\mathbb{R}$ (also
  called factorization envelope, see {\cite[Section
  3.6]{costello_gwilliam_2016}}). The cohomological factorization algebra of
  $\mathbb{U} (\Omega^{\bullet}_{\mathbb{R}} \otimes \mathfrak{g})$ is the
  universal enveloping algebra $U\mathfrak{g}$, regarded as a factorization
  algebra on $\mathbb{R}$ (see {\cite[Section 3.4]{costello_gwilliam_2016}}).
\end{remark}

\subsubsection*{A boundary condition}

For the A boundary condition, we modify the defining data in the above case as
follows:
\[ L = \Omega^{\bullet} (\mathbb{R}) \otimes \mathfrak{g} [1], \qquad L' =
   \Omega^{\bullet} (\mathbb{R}) \otimes \mathfrak{g}^{\ast}, \qquad
   K^{\partial}_+ = \delta (x' - y') (\tmop{dy}' - \tmop{dx}') \otimes t_a
   \otimes \eta t^a . \]
Then, $\mathcal{O} (L) [[\hbar]] =\mathcal{O} (\Omega^{\bullet} (\mathbb{R})
\otimes \mathfrak{g} [1]) [[\hbar]]$, and we can formally derive
\[ b^{\tmop{int}} = p e^{\hbar (\Delta_t K^{\tmop{sym}})_2} \{ \rho
   (I^{\partial}), - \}_t i = p \{ \rho (I^{\partial}), - \}_t i. \]
So, we obtain a cochain complex
\begin{equation}
  (\mathcal{O} (\Omega^{\bullet} (\mathbb{R}) \otimes \mathfrak{g} [1])
  [[\hbar]], \mathd_{\mathbb{R}} + b^{\tmop{int}}),
  \label{fakecalcuresult2dbfcdtna412}
\end{equation}
where $b^{\tmop{int}}$ is a derivation decided by
\[ b^{\tmop{int}} \left( \int_{\mathbb{R}} f^a \wedge A_a \right) = \frac{(-
   1)^{| f^c |}}{2} f^{a b}_c \int_{\mathbb{R}} f^c \wedge A_a \wedge A_b . \]
Thus we can identify $(\mathcal{O} (\Omega^{\bullet} (\mathbb{R}) \otimes
\mathfrak{g} [1]), \mathd_{\mathbb{R}} + b^{\tmop{int}})$ with $\tmop{CE}
(\Omega^{\bullet} (\mathbb{R}) \otimes \mathfrak{g})$, the Chevalley-Eilenberg
algebra of the dg Lie algebra $\Omega^{\bullet} (\mathbb{R}) \otimes
\mathfrak{g}$.

\begin{remark}
  If we apply the constructions in {\cite[Section
  4.7]{rabinovich2021factorization}} to BF theory on $\mathbb{R}_{\geqslant 0}
  \times \mathbb{R}$, we will obtain a factorization algebra of observables
  living on $\mathbb{R}_{\geqslant 0} \times \mathbb{R}$, denoted by
  $\tmop{Obs}^q_{\tmop{BF}, B}$ or $\tmop{Obs}^q_{\tmop{BF}, A}$ for the B or
  A boundary condition case. Then, the projection $\mathpi :
  \mathbb{R}_{\geqslant 0} \times \mathbb{R} \mapsto \mathbb{R}$ will induce
  pushforward factorization algebras $\mathpi_{\ast} (\tmop{Obs}^q_{\tmop{BF},
  B})$ and $\mathpi_{\ast} (\tmop{Obs}^q_{\tmop{BF}, A})$ on $\mathbb{R}$. If
  we figured out a rigorous version of the definitions and calculations
  presented in the current subsection, we would confirm a weak equivalence
  between $\mathpi_{\ast} (\tmop{Obs}^q_{\tmop{BF}, B}) |_{\hbar = 1}$ and
  $U\mathfrak{g}$, and a weak equivalence between $\mathpi_{\ast}
  (\tmop{Obs}^q_{\tmop{BF}, A}) |_{\hbar = 1}$ and $\tmop{CE} (\mathfrak{g})$.
  Expectations for this relation can be found in the literature (see e.g.,
  {\cite[Remark 16]{gwilliam2021factorization}}).
\end{remark}

\section{Effective Theory and Derived BV
Algebras}\label{sectn5eftdrvdbvalg220307}

If we use BV formalism to construct a renormalized QFT, the differential on
the observable complex (\ref{abstctgnrlintobscmplx001}) will always consist of
a BV operator (``second-order'') and a derivation (``first-order''). However,
we have seen in Example \ref{egeftccltn46} that the differential on the
effective observables can be a ``higher-order'' operator. In order to describe
it, we resort to a generalization of BV algebra called ``derived BV algebra'',
introduced by Olga Kravchenko {\cite{kravchenko1999deformations}} (the
terminology there is ``commutative $\tmop{BV}_{\infty}$ algebra''). In
{\cite{bandiera2020cumulants}}, Ruggero Bandiera showed that derived BV
algebra structures can be transferred along certain kind of SDR's via
homological perturbation theory. We will review this result (also simplify it
a little) in this section, and conclude that constructions in previous
sections fit into derived BV algebra structure (see Proposition
\ref{hrgdrvdbvmpsm512} and Proposition \ref{reallythelastpp519ref}).

\subsubsection*{Cumulants and Koszul brackets}

In this subsection we collect basics to prepare for discussions of derived BV
algebras. We refer to {\cite{bandiera2020cumulants}} and references therein
for details.

\begin{definition}
  \label{dfntncmlts51}Let $\mathcal{A}, \mathcal{B}$ be graded commutative
  unital algebras over field $k$, $f : \mathcal{A} \mapsto \mathcal{B}$ be a
  degree $0$ linear map such that $f (\tmmathbf{1}_{\mathcal{A}})
  {=\tmmathbf{1}_{\mathcal{B}}} $. The {\tmstrong{cumulants}} $\{ \kappa (f)_n
  : \mathcal{A}^{\otimes n} \mapsto \mathcal{B}|n \in \mathbb{Z}_+ \}$ are
  defined by the following formula:
  \begin{eqnarray}
    &  & \kappa (f)_n (a_1, a_2, \ldots, a_n) \assign \nonumber\\
    &  & \sum_{\tmscript{\begin{array}{c}
      j, m_1, \ldots, m_j \geqslant 1,\\
      m_1 + \cdots + m_j = n
    \end{array}}} \sum_{\sigma \in \mathbf{S} (m_1, \ldots, m_j)} \frac{(-
    1)^{j - 1}}{j}  (\pm)_{\tmop{Kos}} f (a_{\sigma (1)} \cdots a_{\sigma
    (m_1)}) \cdots f (a_{\sigma (n - m_j + 1)} \cdots a_{\sigma (n)}) 
    \label{hophopdeflastfmldef515151}
  \end{eqnarray}
  where $a_1, a_2, \ldots, a_n \in \mathcal{A}$, and
  \begin{equation}
    \mathbf{S} (m_1, \ldots, m_j) \assign \left\{ \sigma \in \mathbf{S}_n |
    \sigma (1) < \cdots < \sigma (m_1), \quad \ldots \text{} \hspace{1.0em},
    \sigma (n - m_j + 1) < \cdots < \sigma (n) \right\} \label{icbrpttn52}
  \end{equation}
  can be regarded as the set of ordered $(m_1, \ldots, m_j)$-partitions of $\{
  1, 2, \ldots, n \}$. Actually $\mathbf{S} (m_1, \ldots, m_j)$ is defined for
  $m_1, \ldots, m_j \geqslant 0$, although in
  (\ref{hophopdeflastfmldef515151}) we only sum over $\sigma \in \mathbf{S}
  (m_1, \ldots, m_j)$ with $m_1, \ldots, m_j \geqslant 1$.
\end{definition}

By definition, $\kappa (f)_n$ is symmetric in the inputs, and $\kappa (f)_1 =
f$. Roughly speaking, the cumulants of $f$ for $n \geqslant 2$ measure the
deviation of $f$ from being a morphism of algebras. For example, it is direct
to see
\[ \kappa (f)_2 (a_1, a_2) = f (a_1 a_2) - f (a_1) f (a_2) . \]
If $f$ is a morphism of algebras, then $\kappa (f)_n = 0$ for all $n \geqslant
2$.

\begin{definition}
  \label{dfntnkzlbrkt52}Let $\mathcal{A}$ be a graded commutative unital
  algebra over field $k$, $D : \mathcal{A} \mapsto \mathcal{A}$ be a linear
  map such that $D (\tmmathbf{1}_{\mathcal{A}}) = 0$. The {\tmstrong{Koszul
  brackets}} $\{ \mathcal{K} (D)_n : \mathcal{A}^{\otimes n} \mapsto
  \mathcal{A}|n \in \mathbb{Z}_+ \}$ are defined by the following formula:
  \[ \mathcal{K} (D)_n (a_1, a_2, \ldots, a_n) \assign \sum_{m = 1}^n
     \sum_{\sigma \in \mathbf{S} (m, n - m)} (\pm)_{\tmop{Kos}} (- 1)^{n - m}
     D (a_{\sigma (1)} \cdots a_{\sigma (m)}) a_{\sigma (m + 1)} \cdots
     a_{\sigma (n)} \]
  for $a_1, a_2, \ldots, a_n \in \mathcal{A}$.
\end{definition}

By definition, $\mathcal{K} (D)_n$ is symmetric in the inputs, and
$\mathcal{K} (D)_1 = D$. Roughly speaking, the Koszul brackets of $D$ for $n
\geqslant 2$ measure the deviation of $D$ from being a derivation of graded
algebras. For example, it is direct to see
\[ \mathcal{K} (D)_2 (a_1, a_2) = D (a_1 a_2) - D (a_1) a_2 - (- 1)^{| D | |
   a_1 |} a_1 D (a_2) . \]
\begin{remark}
  For $n \geqslant 1$, let $d_1, d_2, \ldots, d_n$ be derivations on
  $\mathcal{A}$. Then, their composition $D \assign d_1 d_2 \ldots d_n$ is an
  ``$n$-th-order'' operator, satisfying the following properties:
  \begin{itemizedot}
    \item $\mathcal{K} (D)_m = 0$ for $\forall m > n$;
    
    \item $\mathcal{K} (D)_n$ is a ``multi-derivation'', i.e., it is a
    derivation with respect to each input argument.
  \end{itemizedot}
  In Definition \ref{dfntndgbvalgbrasec2229}, we require the BV operator
  $\Delta$ to be ``second-order'', so $\mathcal{K} (\Delta)_{m > 2} = 0$. The
  BV bracket $\{ -, - \}$ is exactly $\mathcal{K} (\Delta)_2$.
\end{remark}

Besides defining $\{ \kappa (f)_n \}$ and $\{ \mathcal{K} (D)_n \}$ by
concrete formulae as above, we can also generate these maps from $f$ and $D$
in a systematic way. This method involves the cofree cocommutative coalgebra
cogenerated by $\mathcal{A}, \mathcal{B}$.

Given a graded vector space $V$ over $k$, we can construct a coalgebra
$(\tmop{Sym} (V), \tmmathbf{\Delta}, \tmmathbf{\varepsilon})$, where
\[ \tmmathbf{\Delta} (v_1 v_2 \cdots v_n) \assign \sum_{m = 0}^n \sum_{\sigma
   \in \mathbf{S} (m, n - m)} (\pm)_{\tmop{Kos}} (v_{\sigma (1)} v_{\sigma
   (2)} \cdots v_{\sigma (m)}) \otimes (v_{\sigma (m + 1)} v_{\sigma (m + 2)}
   \cdots v_{\sigma (n)}) \]
for $v_1, v_2, \ldots, v_n \in V$, and $\tmmathbf{\varepsilon}$ is the
projection $\tmop{Sym} (V) \mapsto \tmop{Sym}^0 (V)$.

There is a correspondence:
\begin{equation}
  \left\{ \begin{array}{c}
    \text{coalgebra morphism}\\
    F : \tmop{Sym} (V) \mapsto \tmop{Sym} (W)
  \end{array} \right\} \simeq \left\{ \begin{array}{c}
    \text{linear maps } (f_1, \ldots, f_n, \ldots),\\
    f_n \in \tmop{Hom}_k (\tmop{Sym}^n (V), W)
  \end{array} \right\} . \label{crspdccoalgmor}
\end{equation}
The bijections between them are given by:
\begin{eqnarray*}
  F (v_1 \cdots v_n) & = & \sum_{\tmscript{\begin{array}{c}
    j, m_1, \ldots, m_j \geqslant 1,\\
    m_1 + \cdots + m_j = n
  \end{array}}} \frac{1}{j!} \sum_{\sigma \in \mathbf{S} (m_1, \ldots, m_j)}
  (\pm)_{\tmop{Kos}} f_{m_1} (v_{\sigma (1)} \cdots v_{\sigma (m_1)}) \cdots
  f_{m_j} (v_{\sigma (n - m_j + 1)} \cdots v_{\sigma (n)}),\\
  f_n & = & \mathbf{p}_1 F\mathbf{i}_n
\end{eqnarray*}
where $\mathbf{i}_n, \mathbf{p}_n$ are the natural injection and projection
between $\tmop{Sym}^n (-)$ and $\tmop{Sym} (-)$. The linear maps $(f_1,
\ldots, f_n, \ldots)$ are called the {\tmstrong{Taylor coefficients}} of $F$.
In a similar manner, we have another correspondence:
\begin{equation}
  \left\{ \begin{array}{c}
    \text{coderivation } Q \text{ on } \tmop{Sym} (V)
  \end{array} \right\} \simeq \left\{ \begin{array}{c}
    \text{linear maps } (q_0, q_1, \ldots, q_n, \ldots),\\
    q_n \in \tmop{Hom}_k (\tmop{Sym}^n (V), V)
  \end{array} \right\} \label{crspdnccodrvtn}
\end{equation}
with the bijections between them given by:
\begin{eqnarray*}
  Q (v_1 \cdots v_n) & = & \sum_{m = 0}^n \sum_{\sigma \in \mathbf{S} (m, n -
  m)} (\pm)_{\tmop{Kos}} q_m (v_{\sigma (1)} \cdots v_{\sigma (m)}) v_{\sigma
  (m + 1)} \cdots v_{\sigma (n)},\\
  q_n & = & \mathbf{p}_1 Q\mathbf{i}_n .
\end{eqnarray*}
Now, $(q_0, q_1, \ldots, q_n, \ldots)$ are called the Taylor coefficients of
$Q$.

\begin{remark*}
  To avoid confusion in the rest of this paper, for an algebra $\mathcal{A}$,
  we will use $S (\mathcal{A})$ to denote $\tmop{Sym} (\mathcal{A})$, and use
  $\odot$ to denote the symmetric tensor product. So, $\tmop{Sym}^n
  (\mathcal{A})$ will be written as $\mathcal{A}^{\odot n}$. The
  multiplication mark of $\mathcal{A}$ itself will still be omitted.
\end{remark*}

Let $\mathcal{A}, \mathcal{B}$ be graded commutative unital algebras. For $t
\in k, t \neq 0$, we have a coalgebra automorphism
\begin{equation}
  \mathbf{E}^{\mathcal{A}}_t : S (\mathcal{A}) \mapsto S (\mathcal{A}), \qquad
  \mathbf{p}_1 \mathbf{E}^{\mathcal{A}}_t \mathbf{i}_n (a_1 \odot a_2 \odot
  \cdots \odot a_n) = \frac{1}{t^{n - 1}} a_1 a_2 \cdots a_n .
  \label{coalgautomexpntl}
\end{equation}
It is direct to verify that the inverse of $\mathbf{E}^{\mathcal{A}}_t$ is
\begin{equation}
  \mathbf{L}^{\mathcal{A}}_t : S (\mathcal{A}) \mapsto S (\mathcal{A}), \qquad
  \mathbf{p}_1 \mathbf{L}^{\mathcal{A}}_t \mathbf{i}_n (a_1 \odot a_2 \odot
  \cdots \odot a_n) = \frac{(- 1)^{n - 1} (n - 1) !}{t^{n - 1}} a_1 a_2 \cdots
  a_n . \label{coalgautologrthm}
\end{equation}
Given a degree $0$ linear map $f : \mathcal{A} \mapsto \mathcal{B}$ satisfying
$f (\tmmathbf{1}_{\mathcal{A}}) {=\tmmathbf{1}_{\mathcal{B}}} $, we define a
coalgebra morphism
\[ S (f) : S (\mathcal{A}) \mapsto S (\mathcal{B}), \qquad \text{its Taylor
   coefficients are } (f, 0, \ldots, 0, \ldots) . \]
\begin{proposition}
  \label{rltnbtwncmltsandcmltscoalgmor53}In above settings, we define a
  coalgebra morphism
  \begin{equation}
    \tmmathbf{\kappa}_t (f) : S (\mathcal{A}) \mapsto S (\mathcal{B}), \qquad
    \tmmathbf{\kappa}_t (f) \assign \mathbf{L}^{\mathcal{B}}_t S (f)
    \mathbf{E}^{\mathcal{A}}_t . \label{coalgdfntforcmltssec5}
  \end{equation}
  Then, the cumulants in Definition \ref{dfntncmlts51} can be obtained from
  $\tmmathbf{\kappa}_t (f)$:
  \[ \kappa (f)_n (a_1, a_2, \ldots, a_n) = t^{n - 1} \mathbf{p}_1
     \tmmathbf{\kappa}_t (f) \mathbf{i}_n (a_1 \odot a_2 \odot \cdots \odot
     a_n) . \]
\end{proposition}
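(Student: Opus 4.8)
The plan is to compute $\mathbf{p}_1 \tmmathbf{\kappa}_t (f) \mathbf{i}_n$ directly from the correspondence (\ref{crspdccoalgmor}) between coalgebra morphisms and their Taylor coefficients, and then match the outcome term by term against the defining formula (\ref{hophopdeflastfmldef515151}). The starting point is a general observation: for any coalgebra morphism $G : S (V) \mapsto S (W)$ with Taylor coefficients $(g_m)$, the resolution of the identity $\mathrm{id} = \sum_{m \geqslant 0} \mathbf{i}_m \mathbf{p}_m$ together with the defining relation $g_m = \mathbf{p}_1 G \mathbf{i}_m$ yields $\mathbf{p}_1 G = \sum_m g_m \mathbf{p}_m$. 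I would apply this to $G = \mathbf{L}^{\mathcal{B}}_t$, which reduces the problem to understanding the intermediate map $H \assign S (f) \mathbf{E}^{\mathcal{A}}_t$ degree by degree.

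First I would read off the Taylor coefficients of the three constituents: for $\mathbf{E}^{\mathcal{A}}_t$ they are $e_p (a_1 \odot \cdots \odot a_p) = t^{- (p - 1)} a_1 \cdots a_p$; for $S (f)$ they are $(f, 0, 0, \ldots)$; and for $\mathbf{L}^{\mathcal{B}}_t$ they are $\ell_m (b_1 \odot \cdots \odot b_m) = (- 1)^{m - 1} (m - 1) ! \, t^{- (m - 1)} b_1 \cdots b_m$. Expanding $\mathbf{E}^{\mathcal{A}}_t (a_1 \odot \cdots \odot a_n)$ via the formula in (\ref{crspdccoalgmor}), the summand indexed by an ordered partition into $j$ blocks of sizes $m_1, \ldots, m_j$ carries the scalar $t^{- (n - j)}$, because $\sum_i (m_i - 1) = n - j$, and multiplies together the entries inside each block. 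Applying $S (f)$, whose only nonzero Taylor coefficient is $f$ in degree $1$, acts factorwise; here I would record the short check that the prefactor $\tfrac{1}{k!}$ and the permutation sum in the Taylor expansion collapse by graded symmetry of $\odot$ to give $S (f) (b_1 \odot \cdots \odot b_k) = f (b_1) \odot \cdots \odot f (b_k)$.

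Next I would extract the degree-$m$ part $\mathbf{p}_m H \mathbf{i}_n$, which is exactly the $j = m$ contribution above:
\[
  \mathbf{p}_m H \mathbf{i}_n (a_1 \odot \cdots \odot a_n) = \frac{1}{m! \, t^{n - m}} \sum_{\substack{m_1, \ldots, m_m \geqslant 1 \\ m_1 + \cdots + m_m = n}} \; \sum_{\sigma \in \mathbf{S} (m_1, \ldots, m_m)} (\pm)_{\tmop{Kos}} \, f (a_{\sigma (1)} \cdots a_{\sigma (m_1)}) \odot \cdots \odot f (a_{\sigma (n - m_m + 1)} \cdots a_{\sigma (n)}) .
\]
Applying $\ell_m$ turns each $\odot$ into a product in $\mathcal{B}$ and introduces $(- 1)^{m - 1} (m - 1) ! \, t^{- (m - 1)}$. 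Collecting powers of $t$ as $t^{- (n - m)} \cdot t^{- (m - 1)} = t^{- (n - 1)}$ and factorials as $\tfrac{(m - 1) !}{m!} = \tfrac{1}{m}$, the degree-$m$ term becomes $\tfrac{(- 1)^{m - 1}}{m \, t^{n - 1}}$ times the block sum. Summing $\ell_m \mathbf{p}_m H \mathbf{i}_n$ over $m$ and relabelling $m$ as $j$ reproduces exactly $t^{- (n - 1)}$ times the right-hand side of (\ref{hophopdeflastfmldef515151}), which is the claim.

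I expect the only delicate points to be bookkeeping rather than conceptual: confirming that the three scalar factors (from $\mathbf{E}^{\mathcal{A}}_t$, from $\mathbf{L}^{\mathcal{B}}_t$, and the $\tfrac{1}{m!}$ in the Taylor expansion of $H$) combine to the clean coefficient $\tfrac{(- 1)^{m - 1}}{m}$, and verifying that the Koszul signs produced by reordering into blocks agree verbatim with those in Definition \ref{dfntncmlts51}. Both are purely combinatorial and follow from the graded symmetry of $\odot$ and the definition (\ref{icbrpttn52}) of $\mathbf{S} (m_1, \ldots, m_j)$.
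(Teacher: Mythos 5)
Your proposal is correct, and it coincides with the paper's approach: the paper offers no written proof beyond the remark that the proposition ``can be verified by direct computation,'' and your argument is precisely that computation carried out --- reading off the Taylor coefficients of $\mathbf{E}^{\mathcal{A}}_t$, $S(f)$, and $\mathbf{L}^{\mathcal{B}}_t$, isolating the $j$-block contributions, and checking that the scalars $\tfrac{1}{m!}\cdot(-1)^{m-1}(m-1)! = \tfrac{(-1)^{m-1}}{m}$ and $t^{-(n-m)}\cdot t^{-(m-1)} = t^{-(n-1)}$ reproduce formula (\ref{hophopdeflastfmldef515151}). The bookkeeping (powers of $t$ via $\sum_i(m_i-1)=n-j$, factorials, and Koszul signs, which match because $|f|=0$) is all sound.
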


Given a linear map $D : \mathcal{A} \mapsto \mathcal{A}$ satisfying $D
(\tmmathbf{1}_{\mathcal{A}}) = 0$, we define a coderivation
\[ \overline{D} : S (\mathcal{A}) \mapsto S (\mathcal{A}), \qquad \text{its
   Taylor coefficients are } (0, D, 0, \ldots, 0, \ldots) . \]
\begin{proposition}
  \label{rltnbetkszlbraandkszlcodercstctn}In above settings, we define a
  coderivation
  \begin{equation}
    \tmmathbf{\mathcal{K}}_t (D) : S (\mathcal{A}) \mapsto S (\mathcal{A}),
    \qquad \tmmathbf{\mathcal{K}}_t (D) \assign \mathbf{L}^{\mathcal{A}}_t 
    \overline{D} \mathbf{E}^{\mathcal{A}}_t .
    \label{coalgdfntnforkzlbrcktsec5}
  \end{equation}
  Then, $\mathbf{p}_1 \tmmathbf{\mathcal{K}}_t (D) \mathbf{i}_0 = 0$, and the
  Koszul brackets in Definition \ref{dfntnkzlbrkt52} can be obtained from
  $\tmmathbf{\mathcal{K}}_t (D)$:
  \[ \mathcal{K} (D)_n (a_1, a_2, \ldots, a_n) = t^{n - 1} \mathbf{p}_1
     \tmmathbf{\mathcal{K}}_t (D) \mathbf{i}_n (a_1 \odot a_2 \odot \cdots
     \odot a_n) . \]
\end{proposition}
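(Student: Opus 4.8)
The plan is to deduce this from Proposition \ref{rltnbtwncmltsandcmltscoalgmor53} by a linearization argument, viewing a Koszul bracket as the infinitesimal version of a cumulant. The vanishing $\mathbf{p}_1 \tmmathbf{\mathcal{K}}_t (D) \mathbf{i}_0 = 0$ is immediate: the coalgebra automorphism $\mathbf{E}^{\mathcal{A}}_t$ fixes the coaugmentation, so $\mathbf{E}^{\mathcal{A}}_t \mathbf{i}_0 = \mathbf{i}_0$, while the coderivation $\overline{D}$ has vanishing $0$-th Taylor coefficient and hence kills $\mathcal{A}^{\odot 0}$ by (\ref{crspdnccodrvtn}); therefore $\overline{D} \mathbf{E}^{\mathcal{A}}_t \mathbf{i}_0 = 0$.

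For the main identity I would introduce the family $f_s \assign \tmop{id}_{\mathcal{A}} + s D$ depending on a formal parameter $s$. Since $D (\tmmathbf{1}_{\mathcal{A}}) = 0$ we have $f_s (\tmmathbf{1}_{\mathcal{A}}) = \tmmathbf{1}_{\mathcal{A}}$, so Proposition \ref{rltnbtwncmltsandcmltscoalgmor53} applies to every $f_s$. The first step is to identify $\overline{D} = \frac{d}{ds}\big|_{s = 0} S (f_s)$: by definition $S (f_s)$ has Taylor coefficients $(f_s, 0, 0, \ldots)$, so $\mathbf{p}_1 S (f_s) \mathbf{i}_n$ equals $f_s$ for $n = 1$ and vanishes otherwise; differentiating at $s = 0$ produces the Taylor coefficients $(0, D, 0, \ldots)$, which under the correspondence (\ref{crspdnccodrvtn}) are exactly those of $\overline{D}$. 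Because $\mathbf{L}^{\mathcal{A}}_t (-) \mathbf{E}^{\mathcal{A}}_t$ is linear and independent of $s$, this gives $\tmmathbf{\mathcal{K}}_t (D) = \frac{d}{ds}\big|_{s = 0} \tmmathbf{\kappa}_t (f_s)$. Applying Proposition \ref{rltnbtwncmltsandcmltscoalgmor53} to each $f_s$ and differentiating the resulting identity (a polynomial in $s$) at $s = 0$ reduces the claim to the purely combinatorial statement
\[ \frac{d}{ds}\Big|_{s = 0} \kappa (\tmop{id}_{\mathcal{A}} + s D)_n (a_1, \ldots, a_n) = \mathcal{K} (D)_n (a_1, \ldots, a_n) . \]

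To prove this last identity I would expand (\ref{hophopdeflastfmldef515151}). Differentiating the product $\prod_l f_s(a_{\ldots})$ at $s = 0$ replaces exactly one factor $f(\cdots)$ by $D(\cdots)$ and all others by the plain products in $\mathcal{A}$. Grouping the resulting terms by the subset $T \subseteq \{ 1, \ldots, n \}$ of size $m$ on which $D$ lands (listed in increasing order), graded-commutativity of $\mathcal{A}$ collapses the remaining blocks into the single product $a_{\sigma (m + 1)} \cdots a_{\sigma (n)}$ with overall Koszul sign equal to the shuffle sign of the $\sigma \in \mathbf{S} (m, n - m)$ that moves $T$ to the front. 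The scalar attached to $T$ is $\sum_{j \geqslant 1} \frac{(- 1)^{j - 1}}{j} \cdot j \cdot \tmop{Surj} (n - m, j - 1) = \sum_{p \geqslant 0} (- 1)^p \tmop{Surj} (n - m, p)$, where $\tmop{Surj} (r, p)$ counts surjections $[r] \twoheadrightarrow [p]$ (equivalently, the ordered set partitions of the complement of $T$ into $p$ nonempty blocks), the factor $j$ counting the position of the $D$-block. Since $\sum_r \tmop{Surj} (r, p) \frac{x^r}{r!} = (e^x - 1)^p$, one has $\sum_{p \geqslant 0} (- 1)^p (e^x - 1)^p = e^{- x}$, whence $\sum_{p \geqslant 0} (- 1)^p \tmop{Surj} (r, p) = (- 1)^r$; thus the scalar is $(- 1)^{n - m}$, matching Definition \ref{dfntnkzlbrkt52} exactly.

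The main obstacle is the bookkeeping of the final step. First, one must check that the product over all non-$D$ blocks genuinely recombines, with a single clean Koszul sign, into the increasing-order product occurring in $\mathcal{K} (D)_n$, independently of the internal block structure of the complement of $T$; this is where graded-commutativity and a shuffle-sign identity must be invoked globally rather than block-by-block. Second, the counting of ordered set partitions together with the $\frac{1}{j}$ weight and the sum over the position of the $D$-block is precisely what produces the surjection identity $\sum_{p} (- 1)^p \tmop{Surj} (r, p) = (- 1)^r$. Everything else is formal and rests on Proposition \ref{rltnbtwncmltsandcmltscoalgmor53}.
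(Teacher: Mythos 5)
Your strategy---deducing the Koszul-bracket statement by linearizing Proposition \ref{rltnbtwncmltsandcmltscoalgmor53}---is viable, and its combinatorial core is correct: grouping the ordered partitions by the marked $D$-block $T$, the weight $\sum_{j \geqslant 1} \frac{(-1)^{j-1}}{j} \cdot j \cdot \tmop{Surj}(n-m, j-1) = \sum_{p \geqslant 0} (-1)^p \tmop{Surj}(n-m,p) = (-1)^{n-m}$ is exactly right, and it is the same count that a bare expansion of $\mathbf{p}_1 \mathbf{L}^{\mathcal{A}}_t \overline{D} \mathbf{E}^{\mathcal{A}}_t \mathbf{i}_n$ via the correspondences (\ref{crspdccoalgmor}), (\ref{crspdnccodrvtn}) produces; since the paper itself offers nothing beyond ``verified by direct computation,'' nothing is lost by routing through the cumulants. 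Your argument for $\mathbf{p}_1 \tmmathbf{\mathcal{K}}_t(D) \mathbf{i}_0 = 0$ is also fine.

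There is, however, one genuine gap: you never assign a degree to the parameter $s$, and the case the paper actually needs is $|D| = 1$, where your reduction fails as written. Definition \ref{dfntncmlts51} and Proposition \ref{rltnbtwncmltsandcmltscoalgmor53} apply only to maps $f$ of degree $0$ with $f(\tmmathbf{1}_{\mathcal{A}}) = \tmmathbf{1}_{\mathcal{A}}$; if $s$ is an ordinary even parameter and $|D|$ is odd, then $f_s = \tmop{id}_{\mathcal{A}} + sD$ is not such a map, so the proposition you invoke does not cover it, and pushing the formulas through anyway gives wrong signs. Concretely, for $n = 2$ the $s$-coefficient of $\kappa(f_s)_2(a_1,a_2)$ computed with an even $s$ is
\[ D(a_1 a_2) - \tfrac{1}{2}\left(1 + (-1)^{|D||a_2|}\right) D(a_1) a_2 - \tfrac{1}{2}\left(1 + (-1)^{|D||a_1|}\right) a_1 D(a_2), \]
which differs from $\mathcal{K}(D)_2(a_1,a_2) = D(a_1a_2) - D(a_1)a_2 - (-1)^{|D||a_1|} a_1 D(a_2)$ whenever $|D|, |a_1|, |a_2|$ are all odd. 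For the same reason, with even $s$ one does not get $\frac{d}{ds}\big|_{s=0} S(f_s) = \overline{D}$ (the factors $(-1)^{|D|(|a_1| + \cdots + |a_{l-1}|)}$ required by (\ref{crspdnccodrvtn}) are missing), and your claim that the total Koszul sign collapses to the bare shuffle sign of $T$ is false for odd $D$: the sign of moving $D(\text{block}_l)$ to the front depends on $|D|$ and on the position $l$, so it cannot be pulled out of the sum uniformly. The standard repair is to give $s$ degree $-|D|$ with $s^2 = 0$ and run the argument over $\mathcal{A} \otimes k[s]/(s^2)$, which is again a graded commutative unital algebra, so that $f_s$ is an honest degree $0$ unital map and Proposition \ref{rltnbtwncmltsandcmltscoalgmor53} applies verbatim; then the Koszul sign from pulling the odd $s$ to the left cancels precisely the $|D|$-dependent sign from moving $D(\text{block}_l)$ to the front, one gets $S(f_s) = \tmop{id} + s\overline{D}$ on the nose, and the rest of your argument (the grouping by $T$ and the surjection identity) completes the proof.
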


These two propositions can be verified by direct computation. Moreover,
$\tmmathbf{\kappa}_t$ and $\tmmathbf{\mathcal{K}}_t$ satisfy the following
relations:
\begin{itemizedot}
  \item For a degree $0$ map $D : \mathcal{A} \mapsto \mathcal{A}$ satisfying
  $D (\tmmathbf{1}_{\mathcal{A}}) = 0$, suppose the map $e^D$ is well defined
  on $\mathcal{A}$. Then $e^D (\tmmathbf{1}_{\mathcal{A}})
  =\tmmathbf{1}_{\mathcal{A}}$, and $e^{\tmmathbf{\mathcal{K}}_t (D)}$ is a
  well-defined automorphism on $S (\mathcal{A})$, satisfying
  \begin{equation}
    e^{\tmmathbf{\mathcal{K}}_t (D)} =\tmmathbf{\kappa}_t (e^D) .
    \label{expntlrltnkK55}
  \end{equation}
  \item Let $f : \mathcal{A} \mapsto \mathcal{B}, g : \mathcal{B} \mapsto
  \mathcal{C}$ be degree $0$ linear maps such that $f
  (\tmmathbf{1}_{\mathcal{A}}) =\tmmathbf{1}_{\mathcal{B}}, g
  (\tmmathbf{1}_{\mathcal{B}}) {=\tmmathbf{1}_{\mathcal{C}}} $, then
  \begin{equation}
    \tmmathbf{\kappa}_t (g f) =\tmmathbf{\kappa}_t (g) \tmmathbf{\kappa}_t (f)
    . \label{kassociatveppt56}
  \end{equation}
\end{itemizedot}

\subsubsection*{Derived BV algebras}

Now we are ready to introduce the derived BV algebras.

\begin{definition}
  \label{20220309lbl1}Let $\mathcal{A}$ be a graded commutative unital algebra
  over $k$, $\hbar$ be a formal variable with $| \hbar |$ being an even
  integer. Let $D$ be a $k [[\hbar]]$-linear operator on $\mathcal{A}
  [[\hbar]]$, $| D | = 1, D (\tmmathbf{1}_{\mathcal{A}}) = 0$. Then,
  $(\mathcal{A} [[\hbar]], D)$ is called a degree $(1 - | \hbar |)$ derived BV
  algebra if the following conditions are satisfied:
  \begin{itemize}
    \item $D^2 = 0$;
    
    \item if we $k ((\hbar))$-linearly extend $D$ to an operator on
    $\mathcal{A} ((\hbar))$, we can define $\tmmathbf{\mathcal{K}}_{\hbar}
    (D)$ on $S (\mathcal{A}) ((\hbar))$ as in
    (\ref{coalgdfntnforkzlbrcktsec5}), then $\tmmathbf{\mathcal{K}}_{\hbar}
    (D)$ preserves $S (\mathcal{A}) [[\hbar]] \subset S (\mathcal{A})
    ((\hbar))$.
  \end{itemize}
\end{definition}

\begin{remark}
  By Propositioin \ref{rltnbetkszlbraandkszlcodercstctn}, the second condition
  is equivalent to
  \begin{equation}
    \mathcal{K} (D)_n (a_1, \ldots, a_n) \equiv 0 \quad \left( \text{mod }
    \hbar^{n - 1} \right) \qquad \text{for all } n \geqslant 2, \text{ and }
    a_1, \ldots, a_n \in \mathcal{A}. \label{eqvdfntnforbvdrvdmdl}
  \end{equation}
  Hence Definition \ref{20220309lbl1} is equivalent to {\cite[Definition
  2.1]{bandiera2020cumulants}}. If we expand $D$ as
  \[ D = \sum_{n = 0}^{+ \infty} \hbar^n D_n, \qquad D_n \in \tmop{Hom}_k
     (\mathcal{A}, \mathcal{A}), \]
  then (\ref{eqvdfntnforbvdrvdmdl}) is equivalent to $\mathcal{K} (D_n)_{n +
  2} = 0$ for all $n \geqslant 0$.
\end{remark}

In this paper we will take $| \hbar | = 0$ if the degree is not explicitly
specified.

\begin{remark}
  \label{rmkdrvdbvalgefteg58}The observable complex
  (\ref{abstctgnrlintobscmplx001}) is born to be a derived BV algebra.
  Besides, the effective observable complexes in Proposition
  \ref{freethyeffcorigsdrrslt28}, \ref{intactvthyclsdmfdeffcorigsdrrslt217},
  and those $(\mathcal{O} (L) [[\hbar]], Q^{\partial} + b^{\tmop{int}})$'s in
  Example \ref{egeftccltn44}, \ref{egeftccltn45}, \ref{egeftccltn46}, and the
  quasi-calculation results (\ref{fakecalcuresult2dbfcdtnb411}),
  (\ref{fakecalcuresult2dbfcdtna412}) are all derived BV algebras.
\end{remark}

\begin{definition}
  \label{20220309lbl2}Given a pair of degree $(1 - | \hbar |)$ derived BV
  algebras $(\mathcal{A} [[\hbar]], D), (\mathcal{B} [[\hbar]], D')$, a
  morphism between them is a degree $0$ $k [[\hbar]]$-linear map $f :
  \mathcal{A} [[\hbar]] \mapsto \mathcal{B} [[\hbar]], f
  (\tmmathbf{1}_{\mathcal{A}}) =\tmmathbf{1}_{\mathcal{B}}$, satisfying:
  \begin{itemizedot}
    \item $f D = D' f$;
    
    \item if we $k ((\hbar))$-linearly extend $f$ to $f : \mathcal{A}
    ((\hbar)) \mapsto \mathcal{B} ((\hbar))$, we can define
    \[ \tmmathbf{\kappa}_{\hbar} (f) : S (\mathcal{A}) ((\hbar)) \mapsto S
       (\mathcal{B}) ((\hbar)) \]
    as in (\ref{coalgdfntforcmltssec5}), then $\tmmathbf{\kappa}_{\hbar} (f)
    (S (\mathcal{A}) [[\hbar]]) \subset S (\mathcal{B}) [[\hbar]]$.
  \end{itemizedot}
  By the relation (\ref{kassociatveppt56}), compositions of such morphisms are
  well defined, so we obtain a category of derived BV algebras.
\end{definition}

\begin{remark}
  By Proposition \ref{rltnbtwncmltsandcmltscoalgmor53}, the second condition
  is equivalent to
  \[ \kappa (f)_n (a_1, a_2, \ldots, a_n) \equiv 0 \quad \left( \text{mod }
     \hbar^{n - 1} \right) \qquad \text{for all } n \geqslant 2, \text{ and }
     a_1, \ldots, a_n \in \mathcal{A}. \]
  Hence Definition \ref{20220309lbl2} is equivalent to {\cite[Definition
  2.7]{bandiera2020cumulants}}.
\end{remark}

\begin{remark}
  The map $i_{\hbar} = i$ in (\ref{freeqtmeffthyreslt}) is a morphism of
  algebras, hence $\kappa (i)_n = 0$ for $n \geqslant 2$. So, $i_{\hbar}$ is a
  morphism of derived BV algebras. As for $p_{\hbar} = p e^{\hbar (\Delta
  K^{\tmop{sym}})_2}$ in (\ref{freeqtmeffthyreslt}), by (\ref{expntlrltnkK55})
  and (\ref{kassociatveppt56}) we have
  \[ \tmmathbf{\kappa}_{\hbar} (p_{\hbar}) =\tmmathbf{\kappa}_{\hbar} (p)
     \tmmathbf{\kappa}_{\hbar} (e^{\hbar (\Delta K^{\tmop{sym}})_2})
     =\tmmathbf{\kappa}_{\hbar} (p) e^{\tmmathbf{\mathcal{K}}_t (\hbar (\Delta
     K^{\tmop{sym}})_2)} . \]
  Hence $p_{\hbar}$ is also a morphism of derived BV algebras.
\end{remark}

\begin{proposition}
  \label{hrgdrvdbvmpsm512}The second HRG operators $\mathcal{W} (\partial_{P
  (\varepsilon, \Lambda)}, I_{\varepsilon}, -)$ and $\mathcal{W} (\partial_{P
  (\Lambda, \varepsilon)}, I_{\Lambda}, -)$ in
  (\ref{cjgtnofqtmobsvblcplxopnclsdcase3121}) are morphisms of derived BV
  algebras.
\end{proposition}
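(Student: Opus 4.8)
The plan is to verify directly the conditions of Definition \ref{20220309lbl2}, together with the criterion in the Remark following it, for the map $\Phi \assign \mathcal{W} (\partial_{P (\varepsilon, \Lambda)}, I_{\varepsilon}, -)$; its inverse $\mathcal{W} (\partial_{P (\Lambda, \varepsilon)}, I_{\Lambda}, -)$ is then handled by exchanging the roles of $\varepsilon$ and $\Lambda$. Write $G \assign \partial_{P (\varepsilon, \Lambda)}$ for brevity. The two complexes in (\ref{cjgtnofqtmobsvblcplxopnclsdcase3121}) carry the differentials $D_t \assign Q + \hbar \Delta_t + \{ I_t, - \}_t$, and I would first record that each $(\mathcal{O} (\mathcal{E}_L) [[\hbar]], D_t)$ really is a degree $1$ derived BV algebra in the sense of Definition \ref{20220309lbl1}: one has $D_t (\tmmathbf{1}) = 0$, and $D_t$ has order $\leqslant 2$ with its only second-order part $\hbar \Delta_t$ carrying an explicit $\hbar$, so that $\mathcal{K} (D_t)_2 = \hbar \mathcal{K} (\Delta_t)_2 \equiv 0 \ (\text{mod } \hbar)$ and $\mathcal{K} (D_t)_{\geqslant 3} = 0$. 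This is the content of Remark \ref{rmkdrvdbvalgefteg58}.

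Next I would dispose of the easy conditions. That $\Phi$ is $\mathbb{R} [[\hbar]]$-linear and of degree $0$ is immediate from (\ref{fmldfntnof2ndhrgoprtopnclsdcs313}). Unitality follows directly from that same formula, since the normalizing factor is precisely the inverse of the element $e^{\hbar G} (e^{I_{\varepsilon} / \hbar})$, whence $\Phi (\tmmathbf{1}) = (e^{\hbar G} e^{I_{\varepsilon} / \hbar})^{- 1} e^{\hbar G} (e^{I_{\varepsilon} / \hbar}) = \tmmathbf{1}$; note that this normalizing element is exactly $e^{I_{\Lambda} / \hbar}$, as $I_{\Lambda} = \mathcal{W} (\partial_{P (\varepsilon, \Lambda)}, I_{\varepsilon})$. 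The remaining chain-map condition $\Phi D_{\varepsilon} = D_{\Lambda} \Phi$ is nothing but the assertion that (\ref{cjgtnofqtmobsvblcplxopnclsdcase3121}) is a conjugation of cochain complexes, so no new work is needed there.

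The real content is the cumulant bound $\kappa (\Phi)_n \equiv 0 \ (\text{mod } \hbar^{n - 1})$ for $n \geqslant 2$. Here my main line of argument would use the Feynman-graph expansion of the second HRG operator recalled in Definition \ref{HRGoprt1stand2nd}: $\mathcal{W} (\partial_{P (\varepsilon, \Lambda)}, I_{\varepsilon}, f)$ is a sum over connected graphs whose internal vertices carry $I_{\varepsilon}$, whose edges carry the propagator $P (\varepsilon, \Lambda)$, and which contain exactly one external vertex representing $f$. Feeding a product $f_1 \cdots f_n$ into $\Phi$ and comparing with $\Phi (f_1) \cdots \Phi (f_n)$, the cumulant $\kappa (\Phi)_n (f_1, \ldots, f_n)$ collects exactly those graphs in which the $n$ external vertices $f_1, \ldots, f_n$ lie in a single connected component. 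Joining $n$ vertices within one component requires at least $n - 1$ internal edges, and $e^{\hbar G}$ weights every edge by a factor of $\hbar$; hence $\kappa (\Phi)_n$ is divisible by $\hbar^{n - 1}$, which is precisely the criterion of the Remark after Definition \ref{20220309lbl2}.

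The same conclusion admits an algebraic formulation, and I expect making it fully rigorous to be the main obstacle. Writing $\Phi = m_{e^{- I_{\Lambda} / \hbar}} \circ e^{\hbar G} \circ m_{e^{I_{\varepsilon} / \hbar}}$, with $m_g$ denoting multiplication by $g$, the multiplicativity-up-to-$\hbar$ of $\Phi$ is driven by the second-order (``$2$-to-$0$'') nature of $G$: by (\ref{expntlrltnkK55}) one has $\tmmathbf{\kappa}_{\hbar} (e^{\hbar G}) = e^{\tmmathbf{\mathcal{K}}_{\hbar} (\hbar G)}$, and since $\mathcal{K} (G)_{\geqslant 3} = 0$ the Taylor coefficients of $\tmmathbf{\mathcal{K}}_{\hbar} (\hbar G)$ involve only nonnegative powers of $\hbar$, so $e^{\hbar G}$ by itself already satisfies the cumulant bound. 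The delicate point is that the two normalizing multiplications $m_{e^{\pm \cdot / \hbar}}$ individually involve negative powers of $\hbar$ and are not unital, so the clean composition law (\ref{kassociatveppt56}) cannot be applied to them termwise; one must instead verify that, being arranged so as to make $\Phi$ unital, their contributions to each $\kappa (\Phi)_n$ cancel down to order $\hbar^{n - 1}$. Controlling these formal $e^{\pm \cdot / \hbar}$ manipulations, the $\varepsilon \rightarrow 0$ limits, and the proper-support conditions exactly as in Section \ref{sctn3defthy220307} is where the care lies; the graph argument above is the conceptual shortcut that sidesteps it.
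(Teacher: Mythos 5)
Your strategy is legitimate but genuinely different from the paper's, which never expands in Feynman graphs. The paper's proof is infinitesimal in the scale: using the associativity (\ref{2ndhrgasspptysec3319}) of the second HRG operator together with the composition law (\ref{kassociatveppt56}), it shows that $\tmmathbf{\kappa}_{\hbar} (\mathcal{W} (\partial_{P (\varepsilon, \Lambda)}, I_{\varepsilon}, -))$ satisfies a first-order linear ODE in $\Lambda$ whose generator is $\tmmathbf{\mathcal{K}}_{\hbar} (\hbar G_{\Lambda} + \{ I_{\Lambda}, - \}_{G_{\Lambda}})$, where $G_{\Lambda} \assign \left( \frac{\partial}{\partial \Lambda'} \partial_{P (\Lambda, \Lambda')} \right)_{\Lambda' = \Lambda}$; since $\hbar G_{\Lambda}$ is $\hbar$ times a second-order operator and $\{ I_{\Lambda}, - \}_{G_{\Lambda}}$ is a derivation, this generator preserves $S (\mathcal{O} (\mathcal{E}_L)) [[\hbar]]$, and integrating from the identity at $\Lambda = \varepsilon$ (where $P (\varepsilon, \varepsilon) = 0$) yields the cumulant bound for all $\Lambda$. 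Your route proves the same bound globally by graph combinatorics; it buys a concrete ``connected graphs carry $\hbar^{n - 1}$'' picture, at the cost of having to justify rigorously the refined expansion of $\Phi (f_1 \cdots f_n)$ with each $f_i$ a separate external vertex (vacuum components cancelling against the normalization $e^{- I_{\Lambda} / \hbar}$), and the standard exponential-formula identity that the $n$-th cumulant collects exactly the graphs in which all $n$ external vertices lie in a single component -- facts the paper's ODE argument deliberately sidesteps.

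There is, however, one genuine gap in your key counting step. You argue: at least $n - 1$ edges are needed to join the $n$ external vertices, each edge carries $\hbar$, hence divisibility by $\hbar^{n - 1}$. This ignores that every internal vertex carries $I_{\varepsilon} / \hbar$, i.e.\ a factor $\hbar^{- 1}$ (the $\hbar^0$ part of $I_{\varepsilon}$ is generically nonzero), so a component with $V$ internal vertices and $E$ edges is weighted by $\hbar^{E - V}$ up to nonnegative corrections, and ``$E \geqslant n - 1$'' alone does not exclude $E - V < n - 1$. The repair is to use connectedness in full: a connected graph on $V + n$ vertices has $E \geqslant V + n - 1$ edges, hence $E - V \geqslant n - 1$, with equality exactly for trees. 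With this Euler-characteristic count -- which is the graph-side shadow of the paper's statement that the flow generator is ``first-order plus $\hbar$ times second-order'' -- your argument goes through; the easy parts of your proposal (unitality, the chain-map property from (\ref{cjgtnofqtmobsvblcplxopnclsdcase3121}), and the verification that both sides are derived BV algebras) are fine as written.
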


\begin{proof}
  Obviously $\mathcal{W} (\partial_{P (\varepsilon, \Lambda)},
  I_{\varepsilon}, -)$ preserves the unit. By (\ref{2ndhrgasspptysec3319}) and
  (\ref{kassociatveppt56}), we have
  \[ \tmmathbf{\kappa}_{\hbar} (\mathcal{W} (\partial_{P (\varepsilon, \Lambda
     + t)}, I_{\varepsilon}, -)) =\tmmathbf{\kappa}_{\hbar} (\mathcal{W}
     (\partial_{P (\Lambda, \Lambda + t)}, I_{\Lambda}, -))
     \tmmathbf{\kappa}_{\hbar} (\mathcal{W} (\partial_{P (\varepsilon,
     \Lambda)}, I_{\varepsilon}, -)) . \]
  Recall that $P (\Lambda, \Lambda) = 0$ and $\mathcal{W} (0, I, -) =
  \tmop{id}_{\mathcal{O} (\mathcal{E}_L) [[\hbar]]}$, we have
  \[ \left(  \small{\frac{\partial}{\partial \Lambda'}} S (\mathcal{W}
     (\partial_{P (\Lambda, \Lambda')}, I_{\Lambda}, -)) \right)_{\Lambda' =
     \Lambda} = \overline{\left( \small{\frac{\partial}{\partial \Lambda'}}
     \mathcal{W} (\partial_{P (\Lambda, \Lambda')}, I_{\Lambda}, -)
     \right)_{\Lambda' = \Lambda}} . \]
  So,
  \begin{eqnarray*}
    &  & \small{\frac{\partial}{\partial \Lambda}} \tmmathbf{\kappa}_{\hbar}
    (\mathcal{W} (\partial_{P (\varepsilon, \Lambda)}, I_{\varepsilon}, -))\\
    & = & \mathbf{L}^{\mathcal{O} (\mathcal{E}_L) ((\hbar))}_{\hbar} \left( 
    \small{\frac{\partial}{\partial \Lambda'}} S (\mathcal{W} (\partial_{P
    (\Lambda, \Lambda')}, I_{\Lambda}, -)) \right)_{\Lambda' = \Lambda}
    \mathbf{E}^{\mathcal{O} (\mathcal{E}_L) ((\hbar))}_{\hbar}
    \tmmathbf{\kappa}_{\hbar} (\mathcal{W} (\partial_{P (\varepsilon,
    \Lambda)}, I_{\varepsilon}, -))\\
    & = & \tmmathbf{\mathcal{K}}_{\hbar} \left( \left(
    \small{\frac{\partial}{\partial \Lambda'}} \mathcal{W} (\partial_{P
    (\Lambda, \Lambda')}, I_{\Lambda}, -) \right)_{\Lambda' = \Lambda} \right)
    \tmmathbf{\kappa}_{\hbar} (\mathcal{W} (\partial_{P (\varepsilon,
    \Lambda)}, I_{\varepsilon}, -)) .
  \end{eqnarray*}
  Define $G_{\Lambda} \assign \left( \frac{\partial}{\partial \Lambda'}
  \partial_{P (\Lambda, \Lambda')} \right)_{\Lambda' = \Lambda}$, it is a
  $2$-to-$0$ operator on $\mathcal{O} (\mathcal{E}_L)$. Similar to Remark
  \ref{rmk34biabiaddd}, we use $\{ -, - \}_{G_{\Lambda}}$ to denote
  $\mathcal{K} (G_{\Lambda})_2 : \mathcal{O} (\mathcal{E}_L)^{\otimes 2}
  \mapsto \mathcal{O} (\mathcal{E}_L)$, and extend it to
  \[ \{ -, - \}_{G_{\Lambda}} : \mathcal{O}_{\mathcal{P}}^{> 0}
     (\mathcal{E}_{L, c}) [[\hbar]] \otimes \mathcal{O} (\mathcal{E}_L)
     [[\hbar]] \mapsto \mathcal{O} (\mathcal{E}_L) [[\hbar]], \]
  which is a derivation with respect to $\mathcal{O} (\mathcal{E}_L)
  [[\hbar]]$. It is direct to check that
  \[ \left( \small{\frac{\partial}{\partial \Lambda'}} \mathcal{W}
     (\partial_{P (\Lambda, \Lambda')}, I_{\Lambda}, -) \right)_{\Lambda' =
     \Lambda} = \hbar G_{\Lambda} + \{ I_{\Lambda}, - \}_{G_{\Lambda}} . \]
  Thus $\tmmathbf{\kappa}_{\hbar} (\mathcal{W} (\partial_{P (\varepsilon,
  \Lambda)}, I_{\varepsilon}, -))$ satisfies a first order linear differential
  equation
  \[ \small{\frac{\partial}{\partial \Lambda}} \tmmathbf{\kappa}_{\hbar}
     (\mathcal{W} (\partial_{P (\varepsilon, \Lambda)}, I_{\varepsilon}, -))
     =\tmmathbf{\mathcal{K}}_{\hbar} (\hbar G_{\Lambda} + \{ I_{\Lambda}, -
     \}_{G_{\Lambda}}) \tmmathbf{\kappa}_{\hbar} (\mathcal{W} (\partial_{P
     (\varepsilon, \Lambda)}, I_{\varepsilon}, -)) . \]
  Since $\tmmathbf{\mathcal{K}}_{\hbar} (\hbar G_{\Lambda} + \{ I_{\Lambda}, -
  \}_{G_{\Lambda}})$ and $\tmmathbf{\kappa}_{\hbar} (\mathcal{W} (\partial_{P
  (\varepsilon, \varepsilon)}, I_{\varepsilon}, -))$ preserve $S (\mathcal{O}
  (\mathcal{E}_L)) [[\hbar]]$, we conclude that for $\forall \Lambda > 0$,
  $\tmmathbf{\kappa}_{\hbar} (\mathcal{W} (\partial_{P (\varepsilon,
  \Lambda)}, I_{\varepsilon}, -))$ preserves $S (\mathcal{O} (\mathcal{E}_L))
  [[\hbar]]$.
\end{proof}

\subsubsection*{Homotopy transfer for derived BV algebras}

In Remark \ref{rmkdrvdbvalgefteg58} we find that the effective observable
complexes in the examples are all derived BV algebras. Now we give an
explanation.

Let $\mathcal{A}, \mathcal{B}$ be graded commutative unital algebras over
field $k$, suppose there is an SDR of $k [[\hbar]]$-cochain complexes
\begin{equation}
  (\mathcal{B} [[\hbar]], D') \begin{array}{c}
    \mathfrak{i}\\
    \rightleftharpoons\\
    \mathfrak{p}
  \end{array} (\mathcal{A} [[\hbar]], D), \mathfrak{K}, \label{lm513innerfml1}
\end{equation}
where $(\mathcal{A} [[\hbar]], D)$ is a derived BV algebra, $\mathfrak{i}
(\tmmathbf{1}_{\mathcal{B}}) =\tmmathbf{1}_{\mathcal{A}}, \mathfrak{p}
(\tmmathbf{1}_{\mathcal{A}}) =\tmmathbf{1}_{\mathcal{B}}$, and for $\forall n
\geqslant 1, b_1, b_2, \ldots, b_n \in \mathcal{B}$,
\begin{equation}
  \mathfrak{p} (\mathfrak{i} (b_1) \mathfrak{i} (b_2) \cdots \mathfrak{i}
  (b_n)) = b_1 b_2 \cdots b_n . \label{lm513innerfml2}
\end{equation}
By symmetric tensor power construction (Lemma \ref{lemsymsdrcnstrctn}) and $k
((\hbar))$-linear extension, (\ref{lm513innerfml1}) leads to
\begin{equation}
  (S (\mathcal{B}) ((\hbar)), \overline{D'}) \begin{array}{c}
    S (\mathfrak{i})\\
    \rightleftharpoons\\
    S (\mathfrak{p})
  \end{array} (S (\mathcal{A}) ((\hbar)), \overline{D} ),
  \mathfrak{K}^{\tmop{sym}} . \label{lm513innerfml3}
\end{equation}
(It is easy to check that $\mathfrak{i}^{\tmop{sym}} = S (\mathfrak{i}),
\mathfrak{p}^{\tmop{sym}} = S (\mathfrak{p}), (D')^{\tmop{der}} =
\overline{D'}$ and $D^{\tmop{der}} = \overline{D}$, if there is any confusion
here.) There is a conjugation on the RHS of (\ref{lm513innerfml3}):
\begin{equation}
  (S (\mathcal{A}) ((\hbar)), \overline{D} ) \begin{array}{c}
    \mathbf{L}^{\mathcal{A}}_{\hbar}\\
    \rightleftharpoons\\
    \mathbf{E}^{\mathcal{A}}_{\hbar}
  \end{array} (S (\mathcal{A}) ((\hbar)), \tmmathbf{\mathcal{K}}_{\hbar} (D) )
  . \label{lm513innerfml50}
\end{equation}
It is direct to check that $(\tmmathbf{\mathcal{K}}_{\hbar} (D) -
\overline{D})$ is a small perturbation to (\ref{lm513innerfml3}). So, we can
write down a perturbed SDR:
\begin{equation}
  (S (\mathcal{B}) ((\hbar)), (\overline{D'})_{\mathbf{E}\mathbf{L}})
  \begin{array}{c}
    S (\mathfrak{i})_{\mathbf{E}\mathbf{L}}\\
    \rightleftharpoons\\
    S (\mathfrak{p})_{\mathbf{E}\mathbf{L}}
  \end{array} (S (\mathcal{A}) ((\hbar)), \tmmathbf{\mathcal{K}}_{\hbar} (D)
  ), (\mathfrak{K}^{\tmop{sym}})_{\mathbf{E}\mathbf{L}} .
  \label{lm513innerfml5}
\end{equation}
$(\mathcal{A} [[\hbar]], D)$ being a derived BV algebra implies the
perturbation $(\tmmathbf{\mathcal{K}}_{\hbar} (D) - \overline{D})$ preserves
$\mathcal{A} [[\hbar]]$. Hence the perturbed maps automatically satisfy
\begin{eqnarray*}
  (\overline{D'})_{\mathbf{E}\mathbf{L}} (S (\mathcal{B}) [[\hbar]]) \subset S
  (\mathcal{B}) [[\hbar]], &  &
  (\mathfrak{K}^{\tmop{sym}})_{\mathbf{E}\mathbf{L}} (S (\mathcal{A})
  [[\hbar]]) \subset S (\mathcal{A}) [[\hbar]],\\
  S (\mathfrak{i})_{\mathbf{E}\mathbf{L}} (S (\mathcal{B}) [[\hbar]]) \subset
  S (\mathcal{A}) [[\hbar]], &  & S (\mathfrak{p})_{\mathbf{E}\mathbf{L}} (S
  (\mathcal{A}) [[\hbar]]) \subset S (\mathcal{B}) [[\hbar]] .
\end{eqnarray*}
\begin{proposition}
  \label{hopethelastlablngpp513}In above settings, we have the following
  facts:
  \begin{enumerateromancap}
    \item If for $\forall n \geqslant 1, a_1, a_2, \ldots, a_n \in
    \mathcal{A}$, $\mathfrak{p}, \mathfrak{K}$ further satisfy
    \begin{equation}
      \sum_{m = 1}^n (\pm)_{\tmop{Kos}} \mathfrak{p} (a_1 \cdots \mathfrak{K}
      (a_m) \cdots a_n) = 0, \label{lm513innerfml6}
    \end{equation}
    then,
    \begin{equation}
      (\overline{D'})_{\mathbf{E}\mathbf{L}} =\tmmathbf{\mathcal{K}}_{\hbar}
      (D'), \quad S (\mathfrak{p})_{\mathbf{E}\mathbf{L}}
      =\tmmathbf{\kappa}_{\hbar} (\mathfrak{p}) . \label{lm513innerfml8}
    \end{equation}
    So now $(\mathcal{B} [[\hbar]], D')$ is a derived BV algebra, and
    $\mathfrak{p}$ is a morphism of derived BV algebras.
    
    \item If for $\forall n \geqslant 1, b_1, b_2, \ldots, b_n \in
    \mathcal{B}$, $\mathfrak{i}, \mathfrak{K}$ further satisfy
    \begin{equation}
      \mathfrak{K} (\mathfrak{i} (b_1) \mathfrak{i} (b_2) \cdots \mathfrak{i}
      (b_n)) = 0, \label{lm513innerfml7}
    \end{equation}
    then,
    \begin{equation}
      (\overline{D'})_{\mathbf{E}\mathbf{L}} =\tmmathbf{\mathcal{K}}_{\hbar}
      (D'), \quad S (\mathfrak{i})_{\mathbf{E}\mathbf{L}}
      =\tmmathbf{\kappa}_{\hbar} (\mathfrak{i}) . \label{lm513innerfml9}
    \end{equation}
    So now $(\mathcal{B} [[\hbar]], D')$ is a derived BV algebra, and
    $\mathfrak{i}$ is a morphism of derived BV algebras.
  \end{enumerateromancap}
\end{proposition}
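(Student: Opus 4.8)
The plan is to realize both parts as instances of perturbation by conjugation. Working over $k ((\hbar))$, regard (\ref{lm513innerfml3}) as the initial SDR and take the conjugation $U \assign \mathbf{L}^{\mathcal{A}}_{\hbar}$, $U^{- 1} =\mathbf{E}^{\mathcal{A}}_{\hbar}$, so that $U \overline{D} U^{- 1} =\tmmathbf{\mathcal{K}}_{\hbar} (D)$ by (\ref{coalgdfntnforkzlbrcktsec5}); this is exactly the conjugation (\ref{lm513innerfml50}), and $(\tmmathbf{\mathcal{K}}_{\hbar} (D) - \overline{D})$ is small as already noted. For Part I I would invoke Proposition \ref{conjuptbtnprojcomcase}, whose statement \ref{conjupropprojA} then yields $S (\mathfrak{p})_{\mathbf{E}\mathbf{L}} = W S (\mathfrak{p}) \mathbf{E}^{\mathcal{A}}_{\hbar}$ and $(\overline{D'})_{\mathbf{E}\mathbf{L}} = W \overline{D'} W^{- 1}$ with $W = (S (\mathfrak{p}) \mathbf{E}^{\mathcal{A}}_{\hbar} S (\mathfrak{i}))^{- 1}$; for Part II I would invoke Proposition \ref{conjuptbtninjcomcase}, giving $S (\mathfrak{i})_{\mathbf{E}\mathbf{L}} =\mathbf{L}^{\mathcal{A}}_{\hbar} S (\mathfrak{i}) W^{- 1}$ and $(\overline{D'})_{\mathbf{E}\mathbf{L}} = W \overline{D'} W^{- 1}$ with $W = S (\mathfrak{p}) \mathbf{L}^{\mathcal{A}}_{\hbar} S (\mathfrak{i})$.

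The first routine step is to identify $W$. Since $S (\mathfrak{p}), S (\mathfrak{i})$ are degree-preserving coalgebra morphisms (Taylor coefficients $(\mathfrak{p}, 0, \ldots)$ and $(\mathfrak{i}, 0, \ldots)$) and $\mathbf{E}^{\mathcal{A}}_{\hbar}, \mathbf{L}^{\mathcal{A}}_{\hbar}$ are coalgebra automorphisms, the composites $S (\mathfrak{p}) \mathbf{E}^{\mathcal{A}}_{\hbar} S (\mathfrak{i})$ and $S (\mathfrak{p}) \mathbf{L}^{\mathcal{A}}_{\hbar} S (\mathfrak{i})$ are coalgebra endomorphisms of $S (\mathcal{B}) ((\hbar))$, so it suffices to compare their Taylor coefficients $\mathbf{p}_1 (-) \mathbf{i}_n$. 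Using $\mathbf{p}_1 S (\mathfrak{p}) = \mathfrak{p} \mathbf{p}_1$, the degree-preservation of $S (\mathfrak{i})$, the explicit coefficients in (\ref{coalgautomexpntl}), (\ref{coalgautologrthm}), and the defining identity (\ref{lm513innerfml2}), I expect $S (\mathfrak{p}) \mathbf{E}^{\mathcal{A}}_{\hbar} S (\mathfrak{i}) =\mathbf{E}^{\mathcal{B}}_{\hbar}$ and $S (\mathfrak{p}) \mathbf{L}^{\mathcal{A}}_{\hbar} S (\mathfrak{i}) =\mathbf{L}^{\mathcal{B}}_{\hbar}$. In both parts this gives $W =\mathbf{L}^{\mathcal{B}}_{\hbar}$, hence $S (\mathfrak{p})_{\mathbf{E}\mathbf{L}} =\mathbf{L}^{\mathcal{B}}_{\hbar} S (\mathfrak{p}) \mathbf{E}^{\mathcal{A}}_{\hbar} =\tmmathbf{\kappa}_{\hbar} (\mathfrak{p})$, $S (\mathfrak{i})_{\mathbf{E}\mathbf{L}} =\mathbf{L}^{\mathcal{A}}_{\hbar} S (\mathfrak{i}) \mathbf{E}^{\mathcal{B}}_{\hbar} =\tmmathbf{\kappa}_{\hbar} (\mathfrak{i})$ by (\ref{coalgdfntforcmltssec5}), and $(\overline{D'})_{\mathbf{E}\mathbf{L}} =\mathbf{L}^{\mathcal{B}}_{\hbar} \overline{D'} \mathbf{E}^{\mathcal{B}}_{\hbar} =\tmmathbf{\mathcal{K}}_{\hbar} (D')$. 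Combined with the already-observed fact that the perturbed maps preserve the respective $[[\hbar]]$-submodules, this yields (\ref{lm513innerfml8}) and (\ref{lm513innerfml9}); that $(\mathcal{B} [[\hbar]], D')$ is a derived BV algebra and $\mathfrak{p}, \mathfrak{i}$ are morphisms then follows from Definitions \ref{20220309lbl1} and \ref{20220309lbl2}, noting $D' (\tmmathbf{1}_{\mathcal{B}}) = D' \mathfrak{p} (\tmmathbf{1}_{\mathcal{A}}) = \mathfrak{p} D (\tmmathbf{1}_{\mathcal{A}}) = 0$.

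The main obstacle is verifying the remaining hypotheses of the two conjugation propositions as \emph{full} operator identities: $S (\mathfrak{p}) \mathbf{E}^{\mathcal{A}}_{\hbar} \mathfrak{K}^{\tmop{sym}} = 0$ (condition \ref{conjupropprojB} of Proposition \ref{conjuptbtnprojcomcase}) for Part I, and $\mathfrak{K}^{\tmop{sym}} \mathbf{L}^{\mathcal{A}}_{\hbar} S (\mathfrak{i}) = 0$ (condition \ref{conjupropinjB} of Proposition \ref{conjuptbtninjcomcase}) for Part II. The hypotheses (\ref{lm513innerfml6}) and (\ref{lm513innerfml7}) only control these maps through the cogenerator corestriction $\mathbf{p}_1 (-)$, so the difficulty is to propagate the vanishing to all symmetric powers. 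I would resolve this by writing $\mathfrak{K}^{\tmop{sym}} =\mathfrak{K}^{\tmop{der}} q = q \mathfrak{K}^{\tmop{der}}$ (Lemma \ref{lemsymsdrcnstrctn}) and recalling that $\mathfrak{K}^{\tmop{der}}$ is the coderivation with Taylor coefficients $(0, \mathfrak{K}, 0, \ldots)$. For Part I, $\Phi \assign S (\mathfrak{p}) \mathbf{E}^{\mathcal{A}}_{\hbar}$ is a coalgebra morphism, so $\Phi \mathfrak{K}^{\tmop{der}}$ is a coderivation along $\Phi$; such a map into the cofree conilpotent coalgebra $S (\mathcal{B})$ is uniquely determined by its composition with $\mathbf{p}_1$, in the same spirit as the correspondences (\ref{crspdccoalgmor}), (\ref{crspdnccodrvtn}). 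A short computation gives, on $\mathcal{A}^{\odot n}$, that $\mathbf{p}_1 \Phi \mathfrak{K}^{\tmop{der}}$ equals $\hbar^{- (n - 1)}$ times $\mathfrak{p} \circ (\text{multiplication}) \circ \mathfrak{K}^{\tmop{der}}$, i.e. the left-hand side of (\ref{lm513innerfml6}) up to the invertible factor $\hbar^{- (n - 1)}$, hence $0$; therefore $\Phi \mathfrak{K}^{\tmop{der}} = 0$ and $\Phi \mathfrak{K}^{\tmop{sym}} = \Phi \mathfrak{K}^{\tmop{der}} q = 0$. Part II is symmetric: with $\Xi \assign \mathbf{L}^{\mathcal{A}}_{\hbar} S (\mathfrak{i})$ a coalgebra morphism, $\mathfrak{K}^{\tmop{der}} \Xi$ is a coderivation along $\Xi$ with $\mathbf{p}_1 \mathfrak{K}^{\tmop{der}} \Xi = \mathfrak{K} \mathbf{p}_1 \Xi$ vanishing on $\mathcal{B}^{\odot n}$ by (\ref{lm513innerfml7}), so $\mathfrak{K}^{\tmop{der}} \Xi = 0$ and $\mathfrak{K}^{\tmop{sym}} \Xi = q \mathfrak{K}^{\tmop{der}} \Xi = 0$. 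With these identities established the two conjugation propositions apply and the proof concludes as in the previous paragraph.
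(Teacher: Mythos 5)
Your proposal is correct and takes essentially the same route as the paper: perturb (\ref{lm513innerfml3}) by the conjugation (\ref{lm513innerfml50}) and apply Proposition \ref{conjuptbtnprojcomcase} for Part I and Proposition \ref{conjuptbtninjcomcase} for Part II, with their hypotheses extracted from (\ref{lm513innerfml2}), (\ref{lm513innerfml6}), (\ref{lm513innerfml7}). The paper leaves the verification "as an exercise," and your details — identifying $W = (S(\mathfrak{p})\mathbf{E}^{\mathcal{A}}_{\hbar}S(\mathfrak{i}))^{-1} = \mathbf{L}^{\mathcal{B}}_{\hbar}$ (resp. $W = S(\mathfrak{p})\mathbf{L}^{\mathcal{A}}_{\hbar}S(\mathfrak{i}) = \mathbf{L}^{\mathcal{B}}_{\hbar}$) via Taylor coefficients, and propagating (\ref{lm513innerfml6}), (\ref{lm513innerfml7}) to the full identities $S(\mathfrak{p})\mathbf{E}^{\mathcal{A}}_{\hbar}\mathfrak{K}^{\mathrm{sym}} = 0$ and $\mathfrak{K}^{\mathrm{sym}}\mathbf{L}^{\mathcal{A}}_{\hbar}S(\mathfrak{i}) = 0$ by cofreeness applied to coderivations along coalgebra morphisms — correctly fill in exactly what the paper omits.
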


\begin{proof}
  For the perturbation to (\ref{lm513innerfml3}) induced by conjugation
  (\ref{lm513innerfml50}), it is direct to check that (\ref{lm513innerfml2})
  and (\ref{lm513innerfml6}) imply the statement \ref{conjupropprojB} of
  Proposition \ref{conjuptbtnprojcomcase}. Similarly, (\ref{lm513innerfml2})
  and (\ref{lm513innerfml7}) imply the statement \ref{conjupropinjB} of
  Proposition \ref{conjuptbtninjcomcase}. Then, these two propositions will
  lead to (\ref{lm513innerfml8}) and (\ref{lm513innerfml9}), respectively.
  Details are left as an exercise.
\end{proof}

\begin{definition}
  Let $\mathcal{A}, \mathcal{B}$ be graded commutative unital algebras,
  suppose there is an SDR of cochain complexes
  \begin{equation}
    (\mathcal{B}, D_{\mathcal{B}}) \begin{array}{c}
      \mathfrak{i}\\
      \rightleftharpoons\\
      \mathfrak{p}
    \end{array} (\mathcal{A}, D_{\mathcal{A}}), \mathfrak{K},
    \label{smflalgctrctn519}
  \end{equation}
  where $D_{\mathcal{B}} (\tmmathbf{1}_{\mathcal{B}}) = 0, D_{\mathcal{A}}
  (\tmmathbf{1}_{\mathcal{A}}) = 0$. We call this SDR a {\tmstrong{semifull
  algebra contraction}} if the following identities are satisfied for all
  $a_1, a_2 \in \mathcal{A}, b_1, b_2 \in \mathcal{B}$:
  \begin{eqnarray*}
    \mathfrak{K} (\mathfrak{K} (a_1) \mathfrak{K} (a_2)) =\mathfrak{K}
    (\mathfrak{K} (a_1) \mathfrak{i} (b_2)) & = & \mathfrak{K} (\mathfrak{i}
    (b_1) \mathfrak{i} (b_2)) =\mathfrak{K} (\tmmathbf{1}_{\mathcal{A}}) = 0\\
    \mathfrak{p} (\mathfrak{K} (a_1) \mathfrak{K} (a_2)) =\mathfrak{p}
    (\mathfrak{K} (a_1) \mathfrak{i} (b_2)) & = & 0, \qquad \mathfrak{p}
    (\mathfrak{i} (b_1) \mathfrak{i} (b_2)) = b_1 b_2, \qquad \mathfrak{p}
    (\tmmathbf{1}_{\mathcal{A}}) =\tmmathbf{1}_{\mathcal{B}} .
  \end{eqnarray*}
  Note that these imply $\mathfrak{i} (\tmmathbf{1}_{\mathcal{B}})
  =\mathfrak{i}\mathfrak{p} (\tmmathbf{1}_{\mathcal{A}}) = (1
  +\mathfrak{K}D_{\mathcal{A}} + D_{\mathcal{A}} \mathfrak{K})
  (\tmmathbf{1}_{\mathcal{A}}) =\tmmathbf{1}_{\mathcal{A}}$.
\end{definition}

\begin{proposition}
  Given a semifull algebra contraction as (\ref{smflalgctrctn519}), the
  following identities are satisfied for $\forall n \geqslant 1, b_1, b_2,
  \ldots, b_n \in \mathcal{B}$:
  \[ \mathfrak{p} (\mathfrak{i} (b_1) \mathfrak{i} (b_2) \cdots \mathfrak{i}
     (b_n)) = b_1 b_2 \cdots b_n, \qquad \mathfrak{K} (\mathfrak{i} (b_1)
     \mathfrak{i} (b_2) \cdots \mathfrak{i} (b_n)) = 0. \]
\end{proposition}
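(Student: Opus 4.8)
The plan is to prove both identities simultaneously by induction on $n$, peeling off one factor $\mathfrak{i}(b_n)$ at a time. The base case $n=1$ is immediate: $\mathfrak{p}(\mathfrak{i}(b_1)) = b_1$ is the SDR relation $\mathfrak{p}\mathfrak{i}=1$, and $\mathfrak{K}(\mathfrak{i}(b_1))=0$ is the side condition $\mathfrak{K}\mathfrak{i}=0$. (One could equally start at $n=0$, where $\mathfrak{p}(\tmmathbf{1}_{\mathcal{A}})=\tmmathbf{1}_{\mathcal{B}}$ and $\mathfrak{K}(\tmmathbf{1}_{\mathcal{A}})=0$ are among the defining axioms.) The essential observation is that the semifull contraction axioms are precisely the ``$n=2$'' versions of what we want, together with the two mixed relations $\mathfrak{K}(\mathfrak{K}(a)\mathfrak{i}(b))=0$ and $\mathfrak{p}(\mathfrak{K}(a)\mathfrak{i}(b))=0$; the whole task is to bootstrap from $n=2$ to general $n$.

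For the inductive step, set $y \assign \mathfrak{i}(b_1)\cdots\mathfrak{i}(b_{n-1})$ and assume both $\mathfrak{K}(y)=0$ and $\mathfrak{p}(y)=b_1\cdots b_{n-1}$. The key move is to rewrite $y$ using the SDR homotopy relation $\mathfrak{i}\mathfrak{p} = 1 + D_{\mathcal{A}}\mathfrak{K} + \mathfrak{K}D_{\mathcal{A}}$. Since $\mathfrak{K}(y)=0$ kills the $D_{\mathcal{A}}\mathfrak{K}$ term, this gives
\[ y = \mathfrak{i}\mathfrak{p}(y) - \mathfrak{K}D_{\mathcal{A}}(y) = \mathfrak{i}(b_1\cdots b_{n-1}) - \mathfrak{K}(w), \qquad w \assign D_{\mathcal{A}}(y). \]
Thus $y$ is expressed as a single $\mathfrak{i}$-image plus a single $\mathfrak{K}$-image, which is exactly the shape the mixed axioms are designed to handle.

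Multiplying by $\mathfrak{i}(b_n)$ and distributing, I obtain
\[ y\,\mathfrak{i}(b_n) = \mathfrak{i}(b_1\cdots b_{n-1})\,\mathfrak{i}(b_n) - \mathfrak{K}(w)\,\mathfrak{i}(b_n). \]
Applying $\mathfrak{K}$ and invoking the two axioms $\mathfrak{K}(\mathfrak{i}(b)\mathfrak{i}(b'))=0$ and $\mathfrak{K}(\mathfrak{K}(a)\mathfrak{i}(b'))=0$ makes both terms vanish, yielding $\mathfrak{K}(\mathfrak{i}(b_1)\cdots\mathfrak{i}(b_n))=0$. Applying $\mathfrak{p}$ instead and using $\mathfrak{p}(\mathfrak{i}(b)\mathfrak{i}(b'))=b\,b'$ and $\mathfrak{p}(\mathfrak{K}(a)\mathfrak{i}(b'))=0$ gives $\mathfrak{p}(\mathfrak{i}(b_1)\cdots\mathfrak{i}(b_n)) = (b_1\cdots b_{n-1})\,b_n = b_1\cdots b_n$, closing the induction.

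There is no serious obstacle here; the content lies entirely in the right inductive bookkeeping. The one point that must not be overlooked is that the two identities have to be carried together through the induction, since the rewriting of $y$ uses both $\mathfrak{K}(y)=0$ (to drop the $D_{\mathcal{A}}\mathfrak{K}$ term) and $\mathfrak{p}(y)=b_1\cdots b_{n-1}$ (to identify $\mathfrak{i}\mathfrak{p}(y)$). Graded Koszul signs require no attention in this argument, because every reduction step is either an operator identity coming from the SDR or one of the semifull axioms, each stated for arbitrary homogeneous inputs. This proposition is exactly what supplies hypotheses \eqref{lm513innerfml2} and \eqref{lm513innerfml7} needed to apply Proposition \ref{hopethelastlablngpp513}.
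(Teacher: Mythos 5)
Your proof is correct and is exactly the argument the paper intends: its proof is the one-line hint ``use $\mathfrak{i}(b_1)\cdots\mathfrak{i}(b_n)=(\mathfrak{i}\mathfrak{p}-\mathfrak{K}D_{\mathcal{A}}-D_{\mathcal{A}}\mathfrak{K})(\mathfrak{i}(b_1)\cdots\mathfrak{i}(b_n))$ and induction on $n$,'' and your rewriting of $y=\mathfrak{i}(b_1)\cdots\mathfrak{i}(b_{n-1})$ via the homotopy relation, followed by multiplication by $\mathfrak{i}(b_n)$ and the semifull axioms, is precisely the natural fleshing-out of that hint, including the observation that both identities must be carried jointly through the induction.
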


\begin{proof}
  Hint: use $\mathfrak{i} (b_1) \cdots \mathfrak{i} (b_n) =
  (\mathfrak{i}\mathfrak{p}-\mathfrak{K}D_{\mathcal{A}} - D_{\mathcal{A}}
  \mathfrak{K}) (\mathfrak{i} (b_1) \cdots \mathfrak{i} (b_n))$ and induction
  on $n$.
\end{proof}

\begin{proposition}
  Given a semifull algebra contraction as (\ref{smflalgctrctn519}), if we
  perturb it by a small perturbation $\delta_{\mathcal{A}}$ satisfying
  $\delta_{\mathcal{A}} (\tmmathbf{1}_{\mathcal{A}}) = 0$, the resulting
  perturbed SDR will also be a semifull algebra contraction.
\end{proposition}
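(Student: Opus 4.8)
The plan is to feed the perturbation $\delta_{\mathcal{A}}$ into the Homological Perturbation Lemma (Lemma \ref{hplhplhpl}), read off the perturbed maps, and then verify the defining identities of a semifull algebra contraction one at a time. Writing $\delta \assign \delta_{\mathcal{A}}$ and $A \assign (1-\delta\mathfrak{K})^{-1}\delta$, the perturbed data are $\mathfrak{i}' = (1-\mathfrak{K}\delta)^{-1}\mathfrak{i}$, $\mathfrak{p}' = \mathfrak{p}(1-\delta\mathfrak{K})^{-1}$, $\mathfrak{K}' = (1-\mathfrak{K}\delta)^{-1}\mathfrak{K} = \mathfrak{K}(1-\delta\mathfrak{K})^{-1}$, with new differentials $D_{\mathcal{A}}' = D_{\mathcal{A}} + \delta$ and $D_{\mathcal{B}}' = D_{\mathcal{B}} + \mathfrak{p}A\mathfrak{i}$. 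The whole argument rests on three structural consequences of these formulae. First, the two equivalent expressions for $\mathfrak{K}'$ relate it to $\mathfrak{K}$ by an invertible operator on the left (giving $\ker\mathfrak{K}' = \ker\mathfrak{K}$) and on the right (giving $\tmop{im}\mathfrak{K}' = \tmop{im}\mathfrak{K}$). Second, $\mathfrak{i}' - \mathfrak{i} = \mathfrak{K}A\mathfrak{i}$ lands in $\tmop{im}\mathfrak{K}$, so every $\mathfrak{i}'(b)$ equals $\mathfrak{i}(b) + \mathfrak{K}(w)$ for some $w$. Third, $\mathfrak{p}' = \mathfrak{p} + \mathfrak{p}\delta\mathfrak{K}'$, so $\mathfrak{p}'$ agrees with $\mathfrak{p}$ on any element annihilated by $\mathfrak{K}'$.

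Next I would dispatch the identities asserting $\mathfrak{K}'(\,\cdot\,) = 0$. The key reduction is that, because $\ker\mathfrak{K}' = \ker\mathfrak{K}$, it suffices to prove $\mathfrak{K}(X) = 0$ in order to conclude $\mathfrak{K}'(X) = 0$. For each relevant product I replace every $\mathfrak{K}'(a)$ by an element $\mathfrak{K}(\alpha) \in \tmop{im}\mathfrak{K}$ (using $\tmop{im}\mathfrak{K}' = \tmop{im}\mathfrak{K}$) and every $\mathfrak{i}'(b)$ by $\mathfrak{i}(b) + \mathfrak{K}(w)$, expand, and observe that every resulting monomial has one of the three shapes $\mathfrak{K}(\,\cdot\,)\mathfrak{K}(\,\cdot\,)$, $\mathfrak{K}(\,\cdot\,)\mathfrak{i}(\,\cdot\,)$, $\mathfrak{i}(\,\cdot\,)\mathfrak{i}(\,\cdot\,)$, each killed by $\mathfrak{K}$ under the original semifull identities. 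This handles $\mathfrak{K}'(\mathfrak{K}'(a_1)\mathfrak{K}'(a_2))$, $\mathfrak{K}'(\mathfrak{K}'(a_1)\mathfrak{i}'(b_2))$ and $\mathfrak{K}'(\mathfrak{i}'(b_1)\mathfrak{i}'(b_2))$ simultaneously; the unit case $\mathfrak{K}'(\tmmathbf{1}) = 0$ follows from $\mathfrak{K}(\tmmathbf{1}) = 0$. The Koszul signs are immaterial since each identity is linear in every slot.

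The identities involving $\mathfrak{p}'$ then become almost automatic: by the previous step each of the arguments $\mathfrak{K}'(a_1)\mathfrak{K}'(a_2)$, $\mathfrak{K}'(a_1)\mathfrak{i}'(b_2)$ and $\mathfrak{i}'(b_1)\mathfrak{i}'(b_2)$ lies in $\ker\mathfrak{K}'$, so the third structural fact gives $\mathfrak{p}' = \mathfrak{p}$ on them. Expanding once more into $\mathfrak{i}$- and $\mathfrak{K}$-monomials and applying the original $\mathfrak{p}$-identities, the first two products map to $0$, while $\mathfrak{p}'(\mathfrak{i}'(b_1)\mathfrak{i}'(b_2)) = \mathfrak{p}(\mathfrak{i}(b_1)\mathfrak{i}(b_2)) = b_1 b_2$, all mixed and double-$\mathfrak{K}$ terms dying under $\mathfrak{p}$. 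Finally I check the normalisations: $\delta(\tmmathbf{1}) = 0$ together with $\mathfrak{K}(\tmmathbf{1}) = 0$ gives $(1-\mathfrak{K}\delta)^{-1}\tmmathbf{1} = \tmmathbf{1}$ and likewise $(1-\delta\mathfrak{K})^{-1}\tmmathbf{1} = \tmmathbf{1}$, whence $\mathfrak{i}'(\tmmathbf{1}) = \tmmathbf{1}$, $\mathfrak{p}'(\tmmathbf{1}) = \tmmathbf{1}$, $D_{\mathcal{A}}'(\tmmathbf{1}) = 0$, and $D_{\mathcal{B}}'(\tmmathbf{1}) = \mathfrak{p}A\mathfrak{i}(\tmmathbf{1}) = \mathfrak{p}A\tmmathbf{1} = 0$.

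I do not expect a genuine obstacle; the entire content sits in the first paragraph's observation that $\tmop{im}\mathfrak{K}' = \tmop{im}\mathfrak{K}$ and $\ker\mathfrak{K}' = \ker\mathfrak{K}$, which converts every perturbed identity into an unperturbed one. The only points requiring care are invoking smallness of $\delta$ to ensure $(1-\mathfrak{K}\delta)$ and $(1-\delta\mathfrak{K})$ are invertible (this is exactly what legitimises the reduction $\mathfrak{K}(X)=0 \Rightarrow \mathfrak{K}'(X)=0$ and the identity $\mathfrak{p}' = \mathfrak{p} + \mathfrak{p}\delta\mathfrak{K}'$), and the repeated but routine use of $\delta(\tmmathbf{1})=0$ to keep the unit fixed.
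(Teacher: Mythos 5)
Your proposal is correct, and it is essentially the paper's proof: the paper disposes of this proposition with ``By direct check,'' and your argument is precisely that check, carried out via the homological perturbation formulae $\mathfrak{i}' = (1-\mathfrak{K}\delta)^{-1}\mathfrak{i}$, $\mathfrak{p}' = \mathfrak{p}(1-\delta\mathfrak{K})^{-1}$, $\mathfrak{K}' = (1-\mathfrak{K}\delta)^{-1}\mathfrak{K} = \mathfrak{K}(1-\delta\mathfrak{K})^{-1}$. Your three organizing observations ($\ker\mathfrak{K}' = \ker\mathfrak{K}$ and $\tmop{im}\,\mathfrak{K}' = \tmop{im}\,\mathfrak{K}$; $\mathfrak{i}'-\mathfrak{i}$ lands in $\tmop{im}\,\mathfrak{K}$; $\mathfrak{p}'=\mathfrak{p}$ on $\ker\mathfrak{K}'$) are all valid and reduce every perturbed identity to an unperturbed one, so the verification goes through exactly as you describe.
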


\begin{proof}
  By direct check.
\end{proof}

\begin{remark}
  The result (\ref{tnsrpwrsdr}) of symmetric tensor power construction is a
  semifull algebra contraction.
\end{remark}

So, we have the following conclusion:

\begin{proposition}
  Let $\mathcal{A}, \mathcal{B}$ be graded commutative unital algebras over
  field $k$, suppose there is a semifull algebra contraction of $k
  [[\hbar]]$-algebras
  \begin{equation}
    (\mathcal{B} [[\hbar]], D_{\mathcal{B}}) \begin{array}{c}
      \mathfrak{i}\\
      \rightleftharpoons\\
      \mathfrak{p}
    \end{array} (\mathcal{A} [[\hbar]], D_{\mathcal{A}}), \mathfrak{K},
    \label{endendendfml520}
  \end{equation}
  where $(\mathcal{A} [[\hbar]], D_{\mathcal{A}})$ is a derived BV algebra,
  $\mathfrak{i} (\tmmathbf{1}_{\mathcal{B}}) =\tmmathbf{1}_{\mathcal{A}},
  \mathfrak{p} (\tmmathbf{1}_{\mathcal{A}}) =\tmmathbf{1}_{\mathcal{B}}$.
  Then, $(\mathcal{B} [[\hbar]], D_{\mathcal{B}})$ is also a derived BV
  algebra, and the injection $\mathfrak{i}$ is a morphism of derived BV
  algebras.
  
  If we perturb (\ref{endendendfml520}) by a small perturbation
  $\delta_{\mathcal{A}}$ such that $(\mathcal{A} [[\hbar]], D_{\mathcal{A}} +
  \delta_{\mathcal{A}})$ is still a derived BV algebra, then the perturbed
  differential on $\mathcal{B} [[\hbar]]$ endows $\mathcal{B} [[\hbar]]$ a
  derived BV algebra structure, and the perturbed injection is still a
  morphism of derived BV algebras.
\end{proposition}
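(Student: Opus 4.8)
The plan is to read this statement as a repackaging of the two immediately preceding propositions together with Proposition \ref{hopethelastlablngpp513}(II); essentially no new computation is required, since the analytic content was already carried out there. The strategy is simply to verify that a semifull algebra contraction supplies exactly the hypotheses under which that transfer result applies, and then to feed the perturbation through the same machine.

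First I would assemble the two structural identities needed to trigger Proposition \ref{hopethelastlablngpp513}. The first of the two preceding propositions shows that a semifull algebra contraction (\ref{smflalgctrctn519}) satisfies, for all $n \geqslant 1$ and $b_1, \ldots, b_n \in \mathcal{B}$,
\[
\mathfrak{p}(\mathfrak{i}(b_1) \cdots \mathfrak{i}(b_n)) = b_1 \cdots b_n, \qquad \mathfrak{K}(\mathfrak{i}(b_1) \cdots \mathfrak{i}(b_n)) = 0 .
\]
The left identity is precisely the standing hypothesis (\ref{lm513innerfml2}) of Proposition \ref{hopethelastlablngpp513}, and the right one is exactly the extra hypothesis (\ref{lm513innerfml7}) of its part II. Since $(\mathcal{A}[[\hbar]], D_{\mathcal{A}})$ is a derived BV algebra and $\mathfrak{i}, \mathfrak{p}$ are unital, every premise of Proposition \ref{hopethelastlablngpp513}(II) is met; invoking it gives $(\overline{D_{\mathcal{B}}})_{\mathbf{E}\mathbf{L}} = \tmmathbf{\mathcal{K}}_{\hbar}(D_{\mathcal{B}})$ and $S(\mathfrak{i})_{\mathbf{E}\mathbf{L}} = \tmmathbf{\kappa}_{\hbar}(\mathfrak{i})$, which by Definition \ref{20220309lbl1} and Definition \ref{20220309lbl2} say precisely that $(\mathcal{B}[[\hbar]], D_{\mathcal{B}})$ is a derived BV algebra and that $\mathfrak{i}$ is a morphism of such. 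This settles the first assertion.

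For the perturbed statement I would first note that $\delta_{\mathcal{A}}$ annihilates the unit: both $D_{\mathcal{A}}$ and $D_{\mathcal{A}} + \delta_{\mathcal{A}}$ are derived BV differentials, so each kills $\tmmathbf{1}_{\mathcal{A}}$, forcing $\delta_{\mathcal{A}}(\tmmathbf{1}_{\mathcal{A}}) = 0$. The second preceding proposition then guarantees that the SDR obtained from (\ref{endendendfml520}) by perturbing with $\delta_{\mathcal{A}}$ (via Lemma \ref{hplhplhpl}) is again a semifull algebra contraction, with $(\mathcal{A}[[\hbar]], D_{\mathcal{A}} + \delta_{\mathcal{A}})$ now playing the role of the ambient derived BV algebra. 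Re-applying the first assertion to this perturbed contraction endows the perturbed differential on $\mathcal{B}[[\hbar]]$ with a derived BV structure and shows the perturbed injection is still a morphism.

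Because Proposition \ref{hopethelastlablngpp513} does the genuine work, the remaining care is purely organizational: confirming that unitality of $\mathfrak{i}, \mathfrak{p}$ and the vanishing (\ref{lm513innerfml7}) are exactly the data a semifull contraction provides over $k[[\hbar]]$, and that smallness of $\delta_{\mathcal{A}}$ in the sense of Lemma \ref{hplhplhpl} keeps the perturbed maps defined over $k[[\hbar]]$. I expect the only mildly delicate point to be checking that the unit-annihilation hypothesis persists through the perturbation so that the perturbed contraction still qualifies as semifull; but this is exactly what the second preceding proposition records, so no fresh estimate is needed and the argument closes by bookkeeping alone.
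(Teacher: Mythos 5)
Your proof is correct and follows exactly the route the paper intends: the paper states this proposition without a separate proof (introducing it with ``So, we have the following conclusion:'') precisely because it is the assembly you describe --- the first preceding proposition supplies the hypotheses (\ref{lm513innerfml2}) and (\ref{lm513innerfml7}), Proposition \ref{hopethelastlablngpp513}(II) then yields the derived BV structure on $\mathcal{B}[[\hbar]]$ and the morphism property of $\mathfrak{i}$, and the second preceding proposition (perturbed semifull contractions remain semifull) reduces the perturbed case to the unperturbed one. Your extra check that $\delta_{\mathcal{A}}(\tmmathbf{1}_{\mathcal{A}})=0$, deduced from both $D_{\mathcal{A}}$ and $D_{\mathcal{A}}+\delta_{\mathcal{A}}$ annihilating the unit, is a correct and worthwhile piece of bookkeeping that the paper leaves implicit.
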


Finally, we come to our constructions in previous sections:

\begin{proposition}
  \label{reallythelastpp519ref}The SDR's (\ref{endendend217real}) and
  (\ref{ptbtnrsttqmopnclsd44}) are homotopies between derived BV algebras,
  i.e., the injections and projections are morphisms of derived BV algebras.
\end{proposition}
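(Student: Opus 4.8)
The plan is to prove the injections and projections are morphisms by separate arguments, exploiting that both SDR's arise by perturbing a symmetric tensor power SDR: (\ref{ptbtnrsttqmopnclsd44}) comes from perturbing (\ref{sttpntsdrforourcs43}) by $\hbar\Delta_t + \delta^{\tmop{int}}_t$, and (\ref{endendend217real}) from perturbing (\ref{clsclobssdrini001}) by $\hbar\Delta + \{ I, - \}$. By the Remark identifying symmetric tensor power constructions with semifull algebra contractions, both base SDR's are semifull algebra contractions of the form (\ref{endendendfml520}), with $\mathfrak{i},\mathfrak{p}$ unital and $D_{\mathcal{A}}$ a derivation (hence trivially a derived BV differential).

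For the two injections I would invoke that framework directly. In each case the perturbation is small (checked before the respective statements), annihilates the unit, and leaves the right-hand complex a derived BV algebra: by Remark \ref{rmkdrvdbvalgefteg58} the observable complexes are derived BV algebras, which one re-verifies by noting that each differential is a derivation plus $\hbar$ times a $2$-to-$0$ operator, so the Koszul-bracket condition (\ref{eqvdfntnforbvdrvdmdl}) holds. Hence the final Proposition preceding (\ref{reallythelastpp519ref}), on perturbation of a semifull algebra contraction, applies: the left-hand complexes become derived BV algebras and the perturbed injections $i^{\tmop{int}}_{\hbar}$, $i_t^{\tmop{int}}$ are morphisms. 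No further computation is needed here.

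For the projection in the TQM case the explicit formula (\ref{mainrstpjtnfml45}) is $p_t = p\, e^{\hbar(\Delta_t K^{\tmop{sym}})_2}$, which carries no interaction dressing (the de Rham degree argument in the proof of Theorem \ref{mainthmtqmefthysdr} kills it). Since $p = p^{\tmop{sym}}$ is an algebra morphism, $\tmmathbf{\kappa}_{\hbar}(p) = S(p)$ preserves $S(\mathcal{A})[[\hbar]]$; combining (\ref{kassociatveppt56}) and (\ref{expntlrltnkK55}) gives
\[ \tmmathbf{\kappa}_{\hbar}(p_t) = S(p)\, e^{\tmmathbf{\mathcal{K}}_{\hbar}(\hbar(\Delta_t K^{\tmop{sym}})_2)} . \]
As $(\Delta_t K^{\tmop{sym}})_2$ is a degree-$0$ $2$-to-$0$ operator, $\mathcal{K}(\hbar(\Delta_t K^{\tmop{sym}})_2)_n$ vanishes for $n \geqslant 3$ and is a multiple of $\hbar$ for $n = 2$, so $\tmmathbf{\mathcal{K}}_{\hbar}(\hbar(\Delta_t K^{\tmop{sym}})_2)$ and its exponential preserve $S(\mathcal{A})[[\hbar]]$. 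Thus $p_t$ is a morphism.

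The remaining and hardest case is the projection $p^{\tmop{int}}_{\hbar} = p\,\mathcal{W}((\Delta K^{\tmop{sym}})_2, I, -)$ of (\ref{endendend217real}), which does carry the interaction dressing, so the formal factor $e^{I/\hbar}$ blocks the clean factorization used above. By (\ref{kassociatveppt56}) it suffices to show the second HRG operator $\mathcal{W}((\Delta K^{\tmop{sym}})_2, I, -)$ is a morphism, and I would reproduce the flow argument of Proposition \ref{hrgdrvdbvmpsm512}: since all $2$-to-$0$ operators commute (Proposition \ref{2t0oprtallcmt}), the operators $\mathcal{W}(s(\Delta K^{\tmop{sym}})_2, I, -)$, $s \in [0,1]$, form a flow obeying HRG associativity, so $\tmmathbf{\kappa}_{\hbar}$ of this family satisfies a first-order linear ODE whose generator is $\tmmathbf{\mathcal{K}}_{\hbar}$ of a ($2$-to-$0$)-plus-bracket operator — which preserves $S(\mathcal{A})[[\hbar]]$ — with initial value the identity at $s=0$; integrating yields the claim. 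The main obstacle is carrying out this argument for the fixed, scale-free operator $(\Delta K^{\tmop{sym}})_2$ rather than the scale-indexed propagators of Proposition \ref{hrgdrvdbvmpsm512}, while making sense of the ill-defined $e^{I/\hbar}$ inside $\prod_n \tmop{Sym}^n(M)((\hbar))$ exactly as in the earlier formal computations.
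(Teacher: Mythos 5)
Your treatment of the two injections is exactly the paper's: both rest on the proposition immediately preceding this one (a perturbed semifull algebra contraction with a derived BV algebra on the right yields a derived BV algebra on the left, with the injection a morphism). For the projections you diverge, and the comparison is instructive. The paper handles both projections uniformly and almost instantly through part I of Proposition \ref{hopethelastlablngpp513}: one only needs condition (\ref{lm513innerfml6}), and in fact the stronger identity $\mathfrak{p}(a_1\,\mathfrak{K}(a_2))=0$ holds for both SDR's --- for (\ref{endendend217real}) by (\ref{ppgtvanishkfactorcdt}), and for (\ref{ptbtnrsttqmopnclsd44}) by the same de Rham degree counting used in Theorem \ref{mainthmtqmefthysdr} --- so no explicit formula for either projection is ever needed. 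You instead argue from the explicit formulas, case by case. Your cumulant factorization $\tmmathbf{\kappa}_{\hbar}(p_t)=S(p)\,e^{\tmmathbf{\mathcal{K}}_{\hbar}(\hbar(\Delta_t K^{\tmop{sym}})_2)}$ for (\ref{ptbtnrsttqmopnclsd44}) is correct (it is the same computation as the paper's earlier remark on $p_{\hbar}$ in the free theory, and uses crucially that the interaction dressing drops out of (\ref{mainrstpjtnfml45})). For (\ref{endendend217real}), however, the dressing does not drop out, and your proposed flow argument along $s\mapsto s(\Delta K^{\tmop{sym}})_2$ --- while sound in principle, since the associativity follows from $e^{\hbar(s_1+s_2)G}=e^{\hbar s_2 G}e^{\hbar s_1 G}$, the generator $\tmmathbf{\mathcal{K}}_{\hbar}(\hbar G+\{I_s,-\}_G)$ preserves $S(\mathcal{A})[[\hbar]]$, and the negative $\hbar$-powers of the solution then vanish by uniqueness for the resulting linear ODE --- is exactly the labor you flag as an obstacle, and it is avoidable: condition (\ref{lm513innerfml6}) is precisely the hypothesis Proposition \ref{hopethelastlablngpp513} was designed around, and (\ref{ppgtvanishkfactorcdt}) verifies it with no analysis at all. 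In short, your route buys explicit control of $\tmmathbf{\kappa}_{\hbar}$ of the projections but leaves the hardest case as a nontrivial project carrying all the formal $e^{I/\hbar}$ issues; the paper's route is shorter, uniform over both SDR's, and never touches the ill-defined dressing.
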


\begin{proof}
  By the previous proposition we know the injections in
  (\ref{endendend217real}) and (\ref{ptbtnrsttqmopnclsd44}) are morphisms of
  derived BV algebras. As for the projections, (\ref{ppgtvanishkfactorcdt})
  implies the maps in (\ref{endendend217real}) satisfy
  \[ p_{\hbar}^{\tmop{int}} (a_1 K_{\hbar}^{\tmop{int}} (a_2)) = 0 \qquad
     \text{for } \forall a_1, a_2 \in \widehat{\tmop{Sym}} (M) [[\hbar]] . \]
  Similarly (by de Rham degree reason) the maps in
  (\ref{ptbtnrsttqmopnclsd44}) satisfy
  \[ p_t (a_1 K_t^{\tmop{int}} (a_2)) = 0 \qquad \text{for } \forall a_1, a_2
     \in \mathcal{O} (\mathcal{E}_L) [[\hbar]] . \]
  Hence both (\ref{endendend217real}) and (\ref{ptbtnrsttqmopnclsd44}) satisfy
  the condition (\ref{lm513innerfml6}). By Proposition
  \ref{hopethelastlablngpp513}, the projections in (\ref{endendend217real})
  and (\ref{ptbtnrsttqmopnclsd44}) are also morphisms of derived BV algebras.
\end{proof}

\begin{remark}
  As commutative algebras, the observable complexes in this paper are quite
  special: they all have the form of $\widehat{\tmop{Sym}} (V) [[\hbar]]$ for
  some $V$. Inspired by {\cite[Section 3]{bandiera2020cumulants}}, if
  $(\widehat{\tmop{Sym}} (V) [[\hbar]], D)$ is a derived BV algebra, we may
  call it a ``dual $\tmop{IBL}_{\infty} [1]$ algebra''. We might modify
  Theorem 3.6 in {\cite{bandiera2020cumulants}} and use it to prove
  Proposition \ref{reallythelastpp519ref} without refering to the condition
  (\ref{lm513innerfml6}). We leave this consideration for later work.
\end{remark}

\end{document}